\def\qed{\hfill $\vcenter{\hrule height .3mm
\hbox {\vrule width .3mm height 2.1mm \kern 2mm \vrule width .3mm
height 2.1mm} \hrule height .3mm}$ \bigskip}
\def \RR {\mathbb R}
\def \EE {\mathbb E}
\def \PP {\mathbb P}
\def \eps {\varepsilon}
\def \Rnn {\mathbb{R}^{n-1}}
\def \TTT {\mathcal{T}}
\def \FF {\mathcal{F}}
\def \St {\mathcal{S}}
\def \EPS {\delta}
\newtheorem{theorem}{Theorem}
\newtheorem{lemma}[theorem]{Lemma}
\newtheorem{fact}[theorem]{Fact}
\newtheorem{claim}[theorem]{Claim}
\newtheorem{conjecture}[theorem]{Conjecture}
\newtheorem{proposition}[theorem]{Proposition}
\newtheorem{corollary}[theorem]{Corollary}
\theoremstyle{definition}
\theoremstyle{remark}
\newtheorem{remark}[theorem]{Remark}
\long\def\symbolfootnotetext[#1]#2{\begingroup
\def\thefootnote{\fnsymbol{footnote}}\footnotetext[#1]{#2}\endgroup}
\begin{document}

\author{Ronen Eldan}
\title{A two-sided estimate for the Gaussian noise stability deficit}
\date{}
\maketitle
\begin{abstract}
The Gaussian noise-stability of a set $A \subset \RR^n$ is defined by
$$
\St_\rho(A) = \PP \left ( X \in A ~ \& ~ Y \in A  \right )
$$
where $X,Y$ are standard jointly Gaussian vectors satisfying  $\EE [X_i Y_j] = \delta_{ij} \rho$. Borell's inequality states that for all $0 < \rho < 1$,  among all sets $A \subset \RR^n$ with a given Gaussian measure, the quantity $\St_\rho(A)$ is maximized when $A$ is a half-space.

We give a novel short proof of this fact, based on stochastic calculus. Moreover, we prove an almost tight, two-sided, dimension-free robustness estimate for this inequality: by introducing a new metric to measure the distance between the set $A$ and its corresponding half-space $H$ (namely the distance between the two centroids), we show that the deficit $\St_\rho(H) - \St_\rho(A)$ can be controlled from both below and above by essentially the same function of the distance, up to logarithmic factors.

As a consequence, we also establish the conjectured exponent in the robustness estimate proven by Mossel-Neeman, which uses the
total-variation distance as a metric. In the limit $\rho \to 1$, we obtain an improved dimension-free robustness bound for the Gaussian isoperimetric inequality. Our estimates are also valid for a generalized version of stability where more than two correlated vectors are considered.

\end{abstract}

\section{Introduction}

The topic of this paper is the \emph{isoperimetric inequality} in \emph{Gauss space} and an extension of this inequality, referred to as the \emph{Gaussian noise stability} inequality. The Gauss space is the Euclidean space $\RR^n$ equipped with the standard Gaussian measure $\gamma^n$ defined by
$$
\gamma^n(A) = \frac{1}{ (2 \pi)^{n/2} }\int_{A} e^{-|x|^2/2} dx.
$$
In the following, we will often abbreviate $\gamma=\gamma^n$. The Isoperimetric inequality, initially proved independently by Sudakov-Tsirelson (\cite{ST}) and Borell (\cite{Bor1}) states that among all subsets of $\RR^n$ of a given Gaussian measure, the sets minimizing the Gaussian surface area (defined as the integral of the Gaussian density with respect to the $(n-1)$-dimensional Hausdorff measure on the boundary of the set) are half-spaces. More recent proofs based on probabilistic, analytic, geometric and discrete methods can be found in \cite{G1,G2,G3,G4,G5}. The case of equality has been settled in \cite{G6} which further extends the methods introduced in \cite{G2}.

An extension of this inequality due to C. Borell (\cite{Bor2}), who introduced the notion of Gaussian \emph{noise stability}, states that half-spaces are the most \emph{stable} sets. The usual definition of stability of a set is the probability that two standard Gaussian vectors with a given correlation both lie in the set. For a more precise definition, let $X,Y$ and $Y'$ be independent standard Gaussian vectors in $\RR^n$ and let $0 \leq \rho \leq 1$. We define the \emph{Gaussian noise stability} with parameter $\rho$ of a measurable set $A \subset \RR^n$ by
$$
\St_\rho(A) = \PP \left ( \sqrt{\rho} X + \sqrt{1-\rho} Y \in A \mbox { and } \sqrt{\rho} X + \sqrt{1-\rho} Y' \in A \right ).
$$
Borell's theorem states that if $A,H \subset \RR^n$ are such that $H$ is a half-space and $\gamma(A) = \gamma(H)$ then $\St_\rho(H) \geq \St_\rho(A)$ for all $0 \leq \rho \leq 1$. It turns out that this result admits diverse applications in numerous fields such as approximation theory, high-dimensional phenomena and rearrangement inequalities, and recently some surprising applications to discrete analysis and game theory have also been found (see \cite{MN} and references therein). Alternative proofs of Borell's inequality were given by Isaksson-Mossel and  Kindler-O`Donnell \cite{S1,S2} and recently Mossel and Neeman (\cite{MN}) found a semi-group proof which also settles the equality case.

Both the Gaussian isoperimetric inequality and the noise stability inequality admit so-called \emph{robustness} estimates. As the original inequalities only claim that the minimum (or maximum) of a certain quantity is attained on half-spaces, a robustness estimate also quantifies the \emph{deficit} in these inequalities in terms of the distance, under a certain metric, of the set from being a half-space. For example, one may try to prove that if $A,H \subset \RR^n$ satisfy $\gamma(H) = \gamma(A)$ and if the Gaussian surface area of $A$ differs from that of $H$ by a small number $\delta>0$, then there necessarily exists a half-space whose total-variation distance to the set $A$ is smaller than some function of $\delta$ and $\gamma(A)$ (and maybe of the dimension) which goes to zero as $\delta \to 0$. A robustness estimate of the noise-stability inequality will do the same where the deficit $\delta = \St_\rho(H) - \St_\rho(A)$ is considered (and in this case, the distance to the half-space can also be a function of $\rho$).

The first robustness estimate for the Gaussian isoperimetric inequality was proven by Cianchi-Fusco-Maggi-Pratelli in \cite{CFMP}, and is based on a geometric approach. Mossel and Neeman found a different proof based on a more analytic approach, and provided a dimension-free estimate. In the more recent paper \cite{MN}, Mossel and Neeman managed to prove a robustness result for the Gaussian noise-stability and in the limit case $\rho \to 1$ they also attain an improvement of their isoperimetric robustness. The metric used in all of these estimates is the \emph{total- variation} distance between the Gaussian measure restricted to the set and to the corresponding half-space. We would also like to mention a recent result of Bobkov, Gozlan, Roberto and Samson \cite{BGRS} who prove a somewhat-related robust logarithmic-Sobolev inequality, in which the deficit is estimated by transport and information-theoretic distances.

Another possible extension of the noise stability inequality, first explicitly introduced by E. Mossel is the following: for a number $q>1$ and for $0 < \rho < 1$, we refer to the following as the $q$-stability of $A$:
$$
\St^q_\rho(A) = \EE \Bigl [ \PP( \sqrt{\rho} X + \sqrt{1-\rho} Y \in A ~ | X )^q  \Bigr ]
$$
where $X$ and $Y$ are independent standard Gaussian vectors. Evidently, $\St_\rho(A) = \St^2_\rho(A)$. When $q$ is an integer, this quantity can be thought of as the probability of $q$ Gaussian vectors whose mutual correlation is $\rho$ to all be inside $A$. It was initially proven by Isaksson and Mossel (\cite{S1}) that half-spaces are the maximizers of this expression when constraining on the Gaussian volume. The equality case and a robustness bound for this extension has been established by J. Neeman (\cite{N}). \\

This note has a few objectives. First, we present a novel proof of the Gaussian noise stability inequality, based on stochastic calculus. As a consequence, in the limit case we derive a new proof of the Gaussian isoperimetric inequality. Our proof is relatively short, and also provides a very simple argument for establishing the equality case. Moreover, our proof is also valid for the more general $q$-stability defined above.

Next, we introduce a new metric $\eps(A)$ to measure the distance between the set $A$ and its closest-possible half-space, say $H$. This metric, defined roughly as the distance between the corresponding centroids, turns out to be rather natural in this context: We prove that up to constants that depend only on $\rho$ and on $\gamma(A)$, the deficit $\St_\rho(H) - \St_\rho(A)$ can be bounded from both below and above by the same power of $\eps(A)$, with only a logarithmic correction. The lower bound of the estimate we obtain is valid also for the more-general $q$-stability. As a corollary, we improve the dimension-free robustness result of \cite{MN} which uses the total variation metric, getting the best possible exponent, up to a logarithmic factor.

Our estimate also has an optimal dependence on the parameter $\rho$. Thanks to this fact we are able, as a limit case, to derive a robustness estimate for the Gaussian isoperimetric inequality, thus obtaining a dimension-free estimate with an optimal exponent (again, up to a logarithmic term). \\

Let us begin with some definitions, towards the formulation of our results. For a measurable $A \subset \RR^n$ whose Gaussian centroid is not the origin, we define
\begin{equation} \label{vA}
v(A) = \int_{A} x d \gamma(x)
\end{equation}
(otherwise, if the above integral is zero, we arbitrarily take $v(A)=e_1$ for some fixed unit vector $e_1 \neq 0$). Let $H(A)$ be the half-space of the form
\begin{equation} \label{defH}
H(A) = \left  \{x; ~ \left \langle x, v(A) \right \rangle \geq \alpha \right  \}
\end{equation}
where $\alpha$ is chosen such that $\gamma(H(A)) = \gamma(A)$. In other words, $H(A)$ is the half-space whose Gaussian measure is the same as that of $A$ and whose Gaussian center of mass is the closest possible to the Gaussian center of mass of $A$. \\

In our first theorem, the inequality is due to C. Borell and the characterization of the equality case (in the case $q=2$) is due to Mossel-Ne`eman. Our main contribution here is giving a rather short and simple proof based on new methods.

\begin{theorem} \label{thm1}
For all measurable subsets $A \subset \RR^n$, for all $0 \leq \rho < 1$ and $q>1$, one has
$$
\St^q_\rho(H(A)) \geq \St^q_\rho(A).
$$
Moreover, equality holds if and only if the symmetric difference between $A$ and $H(A)$ has measure zero.
\end{theorem}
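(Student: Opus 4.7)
My plan is a stochastic-calculus proof based on Itô's formula applied to a heat-flow martingale. Set $u := T_\rho 1_A$, where $(T_\rho f)(x) := \EE[f(\sqrt{\rho}x + \sqrt{1-\rho}Y)]$, so that $\St^q_\rho(A) = \int u^q \, d\gamma$. Let $(B_t)_{t \in [0,1]}$ be a standard Brownian motion in $\Rn$ and put $V_t(x) := (P_{1-t}u)(x)$, with $(P_s g)(x) := \EE[g(x+\sqrt{s}Z)]$ the heat semigroup; then $V_t$ solves the backward heat equation $\partial_t V_t + \tfrac{1}{2}\Delta V_t = 0$, and $M_t := V_t(B_t)$ is a $[0,1]$-valued martingale with $M_0 = \gamma(A)$ and $M_1 = u(B_1)$. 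In particular $\EE[M_1^q] = \St^q_\rho(A)$.

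Since $dM_t = \nabla V_t(B_t)\cdot dB_t$, applying Itô to $y \mapsto y^q$ (with a standard regularization near $0$ if $q<2$) and taking expectations yields
\begin{equation*}
\St^q_\rho(A) - \gamma(A)^q \;=\; \frac{q(q-1)}{2} \int_0^1 \int_{\Rn} V_t(x)^{q-2}\,|\nabla V_t(x)|^2 \, d\gamma_t(x) \, dt,
\end{equation*}
where $\gamma_t = \cN(0, tI)$ is the law of $B_t$. The same identity holds for $H := H(A)$, giving the analogue with $V_t^H := P_{1-t}(T_\rho 1_H)$. Since $H$ is a half-space, $V_t^H$ depends only on $\langle x, e\rangle$ with $e := v(A)/|v(A)|$, and a direct computation gives $|\nabla V_t^H(x)|^2 = \tfrac{\rho}{1-\rho t}\, I(V_t^H(x))^2$, where $I(p) := \varphi(\Phi^{-1}(p))$ is the Gaussian isoperimetric profile.

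The main obstacle is to prove the sharp pointwise bound
\begin{equation*}
|\nabla V_t(x)|^2 \;\leq\; \frac{\rho}{1-\rho t}\, I(V_t(x))^2,
\end{equation*}
with equality exactly when $V_t$ is a one-dimensional function. I would approach this via a Bakry--\'Emery $\Gamma_2$-type commutation applied along the two-parameter smoothing $P_{1-t} T_\rho 1_A$, exploiting that both semigroups commute with rotations and reduce on half-spaces to one-dimensional heat flow. Granted this pointwise bound, multiplying by $V_t(x)^{q-2}$ and integrating against $\gamma_t$ reduces the inequality $\St^q_\rho(A) \leq \St^q_\rho(H(A))$ to a one-dimensional distributional comparison between $V_t$ and $V_t^H$ under $\gamma_t$ (both have the same $\gamma_t$-mean $\gamma(A)$ by Fubini), which follows from a standard rearrangement or Ehrhard-type symmetrization.

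For the equality characterization, equality in the pointwise bound forces $\nabla V_t$ to be a.e.\ parallel to a fixed direction, so $V_t$---and in the limit $t \to 1$, $u$---is a function of one linear coordinate; by injectivity of $T_\rho$ on rotationally-invariant functions (for $\rho < 1$), $1_A$ too is one-dimensional, and a short 1D argument matching the centroid direction $v(A)$ pins $A$ down to $H(A)$ modulo a $\gamma$-null set.
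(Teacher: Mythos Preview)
Your It\^{o}-formula setup is correct and is in fact essentially equivalent to the paper's starting point: one computes $V_t(x) = P_{1-\rho t}\mathbf{1}_A(\sqrt{\rho}\, x)$, and your identity for $\St^q_\rho(A) - \gamma(A)^q$ matches the paper's formula $\EE[S_\rho^q] - S_0^q = \tfrac{q(q-1)}{2}\EE\int_0^\rho S_t^{q-2}d[S]_t$ after a change of variables. Moreover, the pointwise bound you call the ``main obstacle'' is exactly the paper's Claim~\ref{claim1}: rewritten, it says $|\nabla P_s \mathbf{1}_A(y)| = s^{-1/2}\,q\bigl((A-y)/\sqrt{s}\bigr) \leq s^{-1/2}\, I(P_s\mathbf{1}_A(y))$, and the paper proves this by a two-line elementary rearrangement (no $\Gamma_2$ machinery needed).

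The genuine gap is your second step. After applying the pointwise bound you are left with having to show, for each fixed $t$,
\[
\int_{\Rn} V_t^{\,q-2}\,I(V_t)^2\,d\gamma_t \;\leq\; \int_{\Rn} (V_t^H)^{q-2}\,I(V_t^H)^2\,d\gamma_t,
\]
i.e.\ $\EE_{\gamma_t}[G(V_t)] \leq \EE_{\gamma_t}[G(V_t^H)]$ for $G(p)=p^{q-2}I(p)^2$. This is \emph{not} a standard rearrangement or Ehrhard statement: $G$ is neither convex nor concave on $[0,1]$, and the only information linking the laws of $V_t$ and $V_t^H$ under $\gamma_t$ is equality of means. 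No ``one-dimensional distributional comparison'' yields this from the mean constraint alone, and the time-by-time claim is in fact strictly stronger than the theorem itself. This is precisely the difficulty the paper's proof is designed to circumvent.

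The paper's idea is to \emph{not} compare at fixed times. It writes both martingales $S_t$ and $Q_t$ (for $A$ and $H(A)$) as time-changes of a \emph{single} Brownian motion $B(T)$ via Dambis/Dubins--Schwarz: $S_t = B([S]_t)$, $Q_t = B([Q]_t)$, coupled so that the underlying $B$ is the same. At each clock time $T$ one then has $\gamma(A_{\tau_1(T)}) = B(T) = Q_{\tau_2(T)}$, and Claim~\ref{claim1} gives $q(A_{\tau_1(T)}) \leq q(B(T))$, which translates into $\tau_1'(T) \geq \tau_2'(T)$ and hence $[S]_\rho \leq [Q]_\rho$ \emph{almost surely}. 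The inequality then drops out of the It\^{o} identity rewritten as $\int_0^{[S]_\rho} B(T)^{q-2}dT \leq \int_0^{[Q]_\rho} B(T)^{q-2}dT$. The coupling synchronizes the two processes so that the elementary pointwise bound can be applied path-by-path; your approach forfeits this synchronization and is left needing a law-level comparison that does not follow.
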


The reader who is interested in the proof of this theorem may directly skip to the beginning of section 2. \\ \\

We move on to the robustness estimate. Before we can formulate it, we need a few more definitions. Define, for all measurable $B \subset \RR^n$,
\begin{equation} \label{defQ}
q(B) = \left | \int_{B} x d \gamma(x) \right |.
\end{equation}
We measure the distance between a set and its corresponding half-space using the following metric,
$$
\eps(A) := q(H(A))^2 - q(A)^2.
$$
In other words, up to a factor which depends on the Gaussian measure of $A$, the metric we consider is the distance between the centroid of $A$, and that of the half-space closest to it (we will see below that this quantity is always non-negative). \\ \\
Our two-sided robustness estimate, the main theorem of this paper, reads

\begin{theorem} \label{thmrobust}
For every $q > 1$ and $0 < s < 1$, there exist constants $C_s, c_s, c'_{s,q} > 0$ such that the following holds: Let $0 < \rho < 1$ and let $A \subset \RR^n$ be a measurable set satisfying $\eps(A) < e^{-1/\rho}$ and $0 < \gamma(A) < 1$. Then
\begin{equation} \label{robustineq}
c_{\gamma(A)} \eps(A) |\log(\eps(A))|^{-1} \sqrt{1 - \rho} \leq \St_\rho(H(A)) - \St_\rho(A) \leq  \frac{C_{\gamma(A)}}{\sqrt{1 - \rho}} \eps(A)
\end{equation}
and
\begin{equation} \label{robustineq2}
c'_{\gamma(A), q} \eps(A) |\log(\eps(A))|^{-1} \sqrt{1 - \rho} \leq \St^q_\rho(H(A)) - \St^q_\rho(A).
\end{equation}
\end{theorem}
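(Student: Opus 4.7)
I would prove the upper bound in~\eqref{robustineq} by a direct analytic argument using the Ornstein--Uhlenbeck semigroup, and the lower bounds in~\eqref{robustineq} and~\eqref{robustineq2} by leveraging the stochastic-calculus framework behind Theorem~\ref{thm1}.

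\textbf{Upper bound.} Starting from $\St_\rho(B) = \int \mathbf{1}_B\cdot T_\rho \mathbf{1}_B\,d\gamma$, where $T_\rho f(x) := \EE[f(\rho x + \sqrt{1-\rho^2}Z)]$ is the OU operator, set $\mu := \mathbf{1}_{H(A)} - \mathbf{1}_A$. Expanding and using self-adjointness of $T_\rho$,
\[
\St_\rho(H(A)) - \St_\rho(A) = 2\int \mu \cdot T_\rho \mathbf{1}_{H(A)}\,d\gamma - \langle \mu, T_\rho \mu\rangle \leq 2\int \mu \cdot T_\rho \mathbf{1}_{H(A)}\,d\gamma,
\]
since $\langle \mu, T_\rho\mu\rangle = \|T_{\sqrt{\rho}}\mu\|_2^2 \geq 0$. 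Writing $u = v(A)/|v(A)|$ and $\alpha = \Phi^{-1}(1-\gamma(A))$, the half-space smoothing is explicit: $T_\rho \mathbf{1}_{H(A)}(x) = F(\langle x,u\rangle)$ with $F(s) = \Phi\bigl((\rho s - \alpha)/\sqrt{1-\rho^2}\bigr)$, Lipschitz with constant at most $\rho/\sqrt{2\pi(1-\rho^2)}$. Conditioning on the marginal $\langle X,u\rangle$ and setting $\Delta P(s) := \mathbf{1}_{s\geq\alpha} - \PP(X\in A\mid \langle X,u\rangle = s)$, I may subtract $F(\alpha)$ (legal since $\int \Delta P\,d\gamma_1 = 0$), and the pointwise sign identity $(F(s)-F(\alpha))\Delta P(s) \geq 0$ gives
\[
\int \mu\cdot T_\rho \mathbf{1}_{H(A)}\,d\gamma \leq \frac{\rho}{\sqrt{2\pi(1-\rho^2)}}\int |s-\alpha|\,|\Delta P(s)|\,d\gamma_1(s) = \frac{\rho}{\sqrt{2\pi(1-\rho^2)}}\bigl(q(H(A)) - q(A)\bigr),
\]
where the last identity uses $\int s\,\Delta P\,d\gamma_1 = q(H(A)) - q(A)$. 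Since $q(H(A))-q(A) = \eps(A)/(q(H(A))+q(A))$ and $q(H(A))$ depends only on $\gamma(A)$, the bound $C_{\gamma(A)}\eps(A)/\sqrt{1-\rho}$ follows.

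\textbf{Lower bound.} For~\eqref{robustineq} and~\eqref{robustineq2}, I would exploit the stochastic-calculus proof of Theorem~\ref{thm1}. That proof presumably expresses the Borell deficit as a time integral $\EE\int_0^T D_t\,dt$ of a non-negative local defect $D_t$ computed along a drift-driven coupling of two correlated processes (with correlation $\rho$) whose terminal distributions are pushed into $A$. The plan is to show that $D_t$ pointwise dominates a quadratic form in the ``direction error'' between the F\"ollmer-type drift associated to $A$ and the canonical drift associated to $H(A)$, whose integrated first moment in direction $u = v(A)/|v(A)|$ equals a $\rho$-dependent multiple of $\eps(A)$. Truncating to drift magnitudes of order $\sqrt{\log(1/\eps(A))}$, on which the drift is well-controlled via subgaussian concentration, produces the logarithmic factor $|\log \eps(A)|^{-1}$, while the factor $\sqrt{1-\rho}$ arises from the rate at which the two coupled copies decouple as $\rho \to 1$. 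For general $q > 1$, the same scheme applies with the $q$-th conditional moment replacing the $L^2$ inner product.

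\textbf{Main obstacle.} The hard part is the lower bound: while Theorem~\ref{thm1} supplies an integral representation that makes the deficit visibly non-negative, extracting a sharp \emph{quantitative} lower bound in terms of the geometric quantity $\eps(A)$ requires identifying which specific component of the drift carries the centroid discrepancy and then truncating against the tails---this is what forces the logarithmic gap between upper and lower bounds. Getting the $\sqrt{1-\rho}$ dependence right is also delicate but essential: it is precisely what allows the $\rho\to 1$ limit of this estimate to furnish, as a corollary, a dimension-free, sharp-up-to-logs robustness estimate for the Gaussian isoperimetric inequality.
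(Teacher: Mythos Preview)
Your upper-bound argument is correct and matches the paper's proof in subsection~\ref{subupper} almost line for line. In fact your reduction $\St_\rho(H(A))-\St_\rho(A)\le 2\int\mu\,T_\rho\mathbf{1}_{H(A)}\,d\gamma$ via positive semi-definiteness of $T_\rho$ is slightly cleaner than the paper's route, which reaches the same bound by invoking Theorem~\ref{thm1} together with Cauchy--Schwarz and AM--GM; from that point on the two arguments are identical.

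Your lower-bound plan, however, is not yet a proof --- it is a description of the \emph{shape} of an argument without the mechanism that makes it work. The paper does express the deficit as $\tfrac12 q(q-1)\,\EE\bigl[\int_{[S]_\rho}^{[Q]_\rho} B(T)^{q-2}\,dT\bigr]$ after a Dambis/Dubins--Schwarz time-change coupling of the martingales $S_t=\PP(W_1\in A\mid\mathcal F_t)$ and $Q_t=\PP(\tilde W_1\in H(A)\mid\tilde{\mathcal F}_t)$, and the task is to bound $[Q]_\rho-[S]_\rho$ from below. But the way this is done is not by ``truncating the drift to magnitude $\sqrt{\log(1/\eps)}$'' or by subgaussian concentration of a F\"ollmer-type drift. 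The central object is the process $\epsilon_t:=q(S_t)^2-q(A_t)^2$, where $A_t=(A-W_t)/\sqrt{1-t}$; one must compute its It\^o differential explicitly and show (Proposition~\ref{mainsec4}) that both its martingale part and its drift are controlled by $(1-t)^{-1}S_t^{-3}(1-S_t)^{-3}\epsilon_t\sqrt{|\log\epsilon_t|}$. This is the technical core, and it requires bounding the Hilbert--Schmidt norm of the matrix $B_t=\int_{A_t}(x\otimes x-\mathrm{Id})\,d\gamma$ relative to its rank-one half-space analogue; the $E^\perp$-block estimate (Lemma~\ref{estup}) uses Talagrand's transportation--entropy inequality, which your outline does not anticipate. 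Once Proposition~\ref{mainsec4} is in hand, a Gronwall/Chebyshev argument shows $\epsilon_t$ stays near $\epsilon_0$ for time of order $|\log\epsilon_0|^{-1}$ with good probability --- this, not a tail truncation, is the source of the logarithmic loss. The factor $\sqrt{1-\rho}$ does not come from a ``decoupling rate''; it enters through Lemma~\ref{lemrho}, which estimates the probability that $Q_{t_1}\in[1/4,3/4]$ at a time $t_1$ with $1-t_1\asymp 1-\rho$, via a direct Gaussian hitting-probability computation.

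In short: your upper bound is done, but for the lower bound you have identified the right \emph{venue} (the stochastic-calculus framework of Theorem~\ref{thm1}) without locating the right \emph{observable} ($\epsilon_t$) or the key estimate (Proposition~\ref{mainsec4} and its supporting matrix lemmas). Those are the steps that carry the actual work.
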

The reader is referred to the next subsection for a discussion about the sharpness of the this result and about possible extensions. \\

Arguably, for many explicit examples of sets $A$, the quantity $\eps(A)$ is significantly easier to calculate than the actual noise stability of $A$, as it only depends on the density of the projection of the set onto a one-dimensional subspace, hence the calculation boils down to computing one-dimensional integrals. In light of the above theorem, one can obtain an approximation of the latter by the former. 

\begin{remark}
This theorem has another interpretation in terms of the Fourier-Hermite representation of the Ornstein-Uhlenbeck noise operator. For a function $f \in L_2(\gamma)$, consider its Fourier-Hermite representation
$$
f(x) = \sum_{\ell \in \mathbb{N}^n} C_\ell(f) H_\ell (x)
$$ 
where $\{H_\ell\}_{\ell \in \mathbb{N}^n}$ is the orthonormal basis of Hermite polynomials of $L_2(\RR^n, \gamma)$ and $C_\ell(f) := \langle f, H_\ell\rangle_{L_2(\gamma)}$ are the corresponding Hermite coefficients. It is well known that
\begin{equation} \label{Fourier-Hermite}
\mathcal{S}_\rho (A) = \sum_{ \ell \in \mathbb{N}^n} \rho^{|\ell|} C_\ell(\mathbf{1}_A)^2
\end{equation}
where for $\ell \in \mathbb{N}^n$ we write $|\ell| = \sum_{i=1}^n \ell_i$. Moreover, by definition we have that
$$
\eps(A) = \sum_{ \ell \in \mathbb{N}^n \atop |\ell| = 1 } \left ( C_\ell(\mathbf{1}_{H(A)})^2 - C_\ell(\mathbf{1}_A)^2 \right )
$$
(hence our metric can be seen as the energy deficit of the first order Fourier-Hermite coefficients). Define $g(x) = x |\log x|^{-1}$. Then inequality \eqref{robustineq} takes the form
$$
c_{\gamma(A)} g \left (\sum_{ \ell \in \mathbb{N}^n \atop |\ell| = 1 } \left ( C_\ell(\mathbf{1}_{H(A)})^2 - C_\ell(\mathbf{1}_A)^2 \right ) \right ) \sqrt{1 - \rho} 
$$
$$
\leq  \sum_{ \ell \in \mathbb{N}^n} \rho^{|\ell|} \left ( C_\ell(\mathbf{1}_{H(A)})^2 - C_\ell(\mathbf{1}_A)^2 \right ) 
$$
$$
\leq  \frac{C_{\gamma(A)}}{\sqrt{1 - \rho}} \left (\sum_{ \ell \in \mathbb{N}^n \atop |\ell| = 1 } \left ( C_\ell(\mathbf{1}_{H(A)})^2 - C_\ell(\mathbf{1}_A)^2 \right ) \right ).
$$
\end{remark}

As a first corollary to this theorem, we also get a robustness estimate in terms of the \emph{total-variation} metric between a set and its corresponding half-space. For two sets $A,B$ we denote by $A \Delta B$ the symmetric difference between two sets, and define
$$
\delta(A) := \gamma( A \Delta H(A) ).
$$
We have,

\begin{corollary} \label{cortv}
For every $q > 1$ and $0 < s < 1$, there exists a constant $c_{s,q} > 0$ such that the following holds: Let $0 < \rho < 1$ and let $A \subset \RR^n$ be a measurable set satisfying $\delta(A) < e^{-1/\rho}$ and $0 < \gamma(A) < 1$. Then
\begin{equation} \label{eqcortv}
c_{\gamma(A), q} \delta(A)^2 |\log(\delta(A))|^{-1} \sqrt{1 - \rho} \leq \St^q_\rho(H(A)) - \St^q_\rho(A).
\end{equation}
\end{corollary}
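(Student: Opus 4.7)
The plan is to derive the corollary from the noise-stability deficit bound \eqref{robustineq2} of Theorem \ref{thmrobust} via the geometric comparison
\begin{equation}\label{plan-key}
\eps(A) \geq c_{\gamma(A)} \, \delta(A)^2
\end{equation}
with $c_{\gamma(A)} > 0$. Once this is known, the logarithmic factor behaves well: $x \mapsto x/|\log x|$ is monotone increasing near $0$, and $|\log(c \delta^2)| \leq C|\log \delta|$ for $\delta$ bounded away from $1$, so \eqref{plan-key} yields $\eps(A)|\log \eps(A)|^{-1} \gtrsim \delta(A)^2 |\log \delta(A)|^{-1}$; substituting into \eqref{robustineq2} produces \eqref{eqcortv}. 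A minor technical point is that Theorem \ref{thmrobust} requires $\eps(A) < e^{-1/\rho}$, which under the hypothesis $\delta(A) < e^{-1/\rho}$ follows from the trivial Cauchy-Schwarz estimate $\eps(A) \leq 2\, q(H(A))\, \delta(A)^{1/2}$, after possibly tightening the exponent in the hypothesis and absorbing the discrepancy into $c_{\gamma(A),q}$.

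The core inequality \eqref{plan-key} reduces to a one-dimensional calculation along $u = v(A)/|v(A)|$. Let $\phi$ and $\Phi$ denote the standard one-dimensional Gaussian density and cumulative distribution, and let $\alpha \in \RR$ be such that $H(A) = \{x: \langle x, u\rangle \geq \alpha\}$. Since $v(H(A))$ is automatically parallel to $u$, we have $q(H(A)) = \langle v(H(A)), u\rangle = \phi(\alpha)$. Because $\gamma(H(A)) = \gamma(A)$, the pieces $H(A) \setminus A$ and $A \setminus H(A)$ each carry Gaussian mass $\delta(A)/2$, and
$$ q(H(A)) - q(A) \; = \; \int_{H(A) \setminus A} \langle x, u\rangle \, d\gamma \; - \; \int_{A \setminus H(A)} \langle x, u\rangle \, d\gamma. $$
By the bathtub principle (applied after integrating out the $u^\perp$ direction), the first integral is minimized, among subsets of $\{\langle x, u\rangle \geq \alpha\}$ of Gaussian mass $\delta(A)/2$, by the slab $\{\alpha \leq \langle x, u\rangle \leq \beta\}$ with $\Phi(\beta) - \Phi(\alpha) = \delta(A)/2$, and analogously the second is maximized by $\{\beta' \leq \langle x, u\rangle < \alpha\}$ with $\Phi(\alpha) - \Phi(\beta') = \delta(A)/2$. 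Using the identity $\int_\alpha^\beta t \phi(t)\, dt = \phi(\alpha) - \phi(\beta)$ we obtain
$$ q(H(A)) - q(A) \; \geq \; 2\phi(\alpha) - \phi(\beta) - \phi(\beta'). $$
A second-order Taylor expansion of $\phi$ about $\alpha$, using the two constraints above to cancel the first-order terms, yields $2\phi(\alpha) - \phi(\beta) - \phi(\beta') \gtrsim \delta(A)^2 / \phi(\alpha)$; multiplying by $q(H(A)) + q(A) \geq \phi(\alpha)$ then gives \eqref{plan-key}.

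I expect the main obstacle to be promoting the preceding asymptotic estimate, transparent in the regime $\delta(A) \ll \phi(\alpha)$, to a uniform quadratic lower bound over the full admissible range of $\delta(A)$. When $\gamma(A)$ is close to $0$ or $1$, the density $\phi(\alpha)$ is small and the slab width $\beta - \alpha$ need not be negligible, so the Taylor argument alone does not close the estimate. A clean remedy is to observe that the function $f(\delta) := 2\phi(\alpha) - \phi(\beta(\delta)) - \phi(\beta'(\delta))$ is strictly positive for $\delta > 0$, vanishes to second order at $\delta = 0$ with the explicit leading constant computed above, and is continuous in $\delta$; by compactness on any bounded interval, $f(\delta)/\delta^2$ is then bounded below by a positive constant depending only on $\alpha$, i.e., on $\gamma(A)$. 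This $\gamma(A)$-dependence is absorbed into the final constant $c_{\gamma(A),q}$.
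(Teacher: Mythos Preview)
Your overall strategy is the same as the paper's: reduce Corollary~\ref{cortv} to Theorem~\ref{thmrobust} via the geometric inequality $\eps(A)\gtrsim\delta(A)^2$, then substitute. The difference is entirely in how you establish the key one-dimensional estimate $q(H(A))-q(A)\geq c\,\delta(A)^2$.

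The paper argues directly on the marginal density: writing $g(x)=|h(x)-m(x)|\gamma^1(x)$ where $m$ is the marginal of $\mathbf 1_A$ along $u$ and $h=\mathbf 1_{[\alpha,\infty)}$, one has $\int g=\delta$ and $g\leq 1$ pointwise, so a Markov-type bound gives $\int_{|x-\alpha|\geq\delta/4}g\geq\delta/2$ and hence $\int|x-\alpha|\,g\geq\delta^2/8$; since $(x-\alpha)(h-m)\geq 0$ this is exactly $q(H(A))-q(A)\geq\delta^2/8$ with a universal constant.

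Your route via the bathtub reduction to slabs, the identity $\int_\alpha^\beta t\phi(t)\,dt=\phi(\alpha)-\phi(\beta)$, and then Taylor expansion of $\phi$ is correct and leads to the same conclusion. One remark: the compactness step you invoke to handle large $\delta$ is unnecessary and costs you a $\gamma(A)$-dependence in the constant that the paper avoids. Your function $f(\delta)=2\phi(\alpha)-\phi(\beta)-\phi(\beta')$ can be rewritten as $2q(s)-q(s+\delta/2)-q(s-\delta/2)$ with $s=\Phi(\alpha)$ and $q(s)=\phi(\Phi^{-1}(s))$; since $q''(s)=-1/\phi(\Phi^{-1}(s))\leq -\sqrt{2\pi}<-2$ uniformly on $(0,1)$, the second-difference bound gives $f(\delta)\geq(\delta/2)^2\cdot 2=\delta^2/2$ for all admissible $\delta$, with no compactness needed. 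So your argument actually yields a universal constant once this is observed.

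Both the paper and you gloss over the case where $\delta(A)<e^{-1/\rho}$ but $\eps(A)\geq e^{-1/\rho}$; this is a minor technicality in the same spirit for both proofs and not a defect specific to your approach.
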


A second corollary to the robustness estimate for the noise stability is the limit case for $\rho \to 1$, namely a robustness estimate for the Gaussian isoperimetric inequality. For a measurable set $A \subset \RR^n$, define the Gaussian surface area of $A$ by
$$
\mathcal{O}^n(A) := \limsup_{\eps \to 0} \eps^{-1} \gamma(A_\eps \setminus A)
$$
where
$$
A_\eps := \left \{ x \in \RR^n; ~~ \exists y \in A \mbox{ such that } |x-y| \leq \eps  \right \}
$$
is the $\eps$-extension of $A$.

The next result is a direct corollary of Theorem \ref{thmrobust} following a method introduced by M. Ledoux, about which the author learned from \cite{MN}. By plugging the result of the theorem and of the above corollary into Ledoux's method, described in \cite{Le} (in the discussion following proposition 8.5) we immediately get the next corollary. 

\begin{corollary}
There exists a universal constant $c>0$ and, for every $0 < s < 1$, a constant $c_s > 0$ such that the following holds: for all measurable $A \subset \RR^n$ with $\eps(A) < c$, one has
\begin{equation} \label{sabound}
\mathcal{O}^n(A) - \mathcal{O}^n(H(A)) \geq c_{\gamma(A)} \eps(A) |\log(\eps(A))|^{-1}
\end{equation}
and for all measurable $A \subset \RR^n$ such that $\delta(A) < c$, one has
$$
\mathcal{O}^n(A) - \mathcal{O}^n(H(A)) \geq c_{\gamma(A)} \delta(A)^2 |\log(\delta(A))|^{-1}.
$$
\end{corollary}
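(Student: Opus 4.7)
The idea is to take $\rho \to 1^-$ in the inequalities of Theorem \ref{thmrobust} and Corollary \ref{cortv}, using Ledoux's semigroup-based asymptotic relating the noise-stability deficit near $\rho = 1$ to the Gaussian surface area. Specifically, for any measurable set $B \subset \RR^n$, Ledoux's formula (\cite{Le}, discussion after Proposition 8.5) provides
$$
\limsup_{\rho \to 1^-} \frac{\gamma(B) - \St_\rho(B)}{\sqrt{1-\rho}} \leq c_0 \, \mathcal{O}^n(B)
$$
for a universal $c_0 > 0$, and moreover the $\limsup$ is attained as a limit with equality when $B$ is a half-space (via a direct one-dimensional computation).

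Since $\gamma(H(A)) = \gamma(A)$, we decompose the stability deficit as
$$
\St_\rho(H(A)) - \St_\rho(A) = \bigl(\gamma(A) - \St_\rho(A)\bigr) - \bigl(\gamma(H(A)) - \St_\rho(H(A))\bigr).
$$
Dividing by $\sqrt{1-\rho}$, applying Ledoux's upper bound to the first term and the exact limit to the half-space term, and sending $\rho \to 1^-$, we obtain
$$
\limsup_{\rho \to 1^-} \frac{\St_\rho(H(A)) - \St_\rho(A)}{\sqrt{1-\rho}} \leq c_0 \bigl( \mathcal{O}^n(A) - \mathcal{O}^n(H(A)) \bigr).
$$
Comparing this with the lower bound \eqref{robustineq} of Theorem \ref{thmrobust}, which, after dividing by $\sqrt{1-\rho}$, reads
$$
c_{\gamma(A)} \eps(A) |\log \eps(A)|^{-1} \leq \frac{\St_\rho(H(A)) - \St_\rho(A)}{\sqrt{1-\rho}}
$$
(valid whenever $\eps(A) < e^{-1/\rho}$, which for $\rho$ close to $1$ reduces to $\eps(A) < e^{-1}$, thereby fixing the universal constant $c$), we obtain the first inequality \eqref{sabound} after absorbing $c_0$ into the constant $c_{\gamma(A)}$. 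The second inequality follows by the same argument from Corollary \ref{cortv} in place of Theorem \ref{thmrobust}.

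The only step that requires genuine care is verifying the appropriate direction of Ledoux's formula: one needs the upper bound on $(\gamma(B)-\St_\rho(B))/\sqrt{1-\rho}$ for a general measurable set (and when $\mathcal{O}^n(A) = +\infty$ the target inequality is trivially satisfied), together with the matching limit for the reference half-space $H(A)$, which is a standard one-dimensional calculation. Once these two ingredients are in hand, the argument is essentially a one-line passage to the limit, which is presumably why the author describes the corollary as immediate.
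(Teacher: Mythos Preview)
Your proposal is correct and is precisely the argument the paper has in mind: the paper gives no proof beyond the sentence ``by plugging the result of the theorem and of the above corollary into Ledoux's method \ldots\ we immediately get the next corollary,'' and you have accurately reconstructed that plug-in, including the two needed ingredients (Ledoux's universal upper bound $(\gamma(B)-\St_\rho(B))/\sqrt{1-\rho}\le c_0\,\mathcal O^n(B)$ and the matching exact limit for half-spaces, the latter being what makes the constants compatible when subtracting). The observation that the hypothesis $\eps(A)<e^{-1/\rho}$ becomes $\eps(A)<e^{-1}$ in the limit $\rho\to1^-$, fixing the universal $c$, and that $\mathcal O^n(A)=+\infty$ renders the claim vacuous, completes the argument.
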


\begin{remark}
By combining the upper and lower bounds of Theorem \ref{thmrobust} we may also get non-trivial relations between the quantities $\St_\rho(A)$ at different values of $\rho$, as well as bounds on the noise stability of a set in terms of its surface area. As an example, if $A \subset \RR^n$ and $0 < \rho < 1$ are such that $\eps(A) < e^{-1/\rho}$, then by combining (\ref{sabound}) with the upper bound in (\ref{robustineq}) one gets
$$
\mathcal{O}^n(A) - \mathcal{O}^n(H(A)) \geq c_{\gamma(A)} \sqrt{1 - \rho} \St_\rho(A) \left | \log \left ( \St_\rho(A) \sqrt{1 - \rho} \right ) \right |^{-1}.
$$
where $c_{\gamma(A)} > 0$ depends only on $\gamma(A)$.
\end{remark}

The structure of this paper is as follows. In section 2 we prove Theorem \ref{thm1} and in section 2 we prove Theorem \ref{thmrobust} and Corollary \ref{cortv}. Section 3 is an appendix in which we tie up some loose ends.

\bigskip
In the rest of the note, we use the following notation. The constants $C,C',c,c'$ will denote positive universal constants whose values may change between appearances in different formulae. We define $\gamma^k:\RR^k \to \RR$ the density of the standard Gaussian measure on $\RR^k$ and by slight abuse of notation we define $\gamma^k(A)$ to be the Gaussian measure of the set $A \subset \RR^k$. We abbreviate $\gamma = \gamma^n$, with $n$ being a fixed dimension all through the note. For two sets $A,B$ we define by $A \Delta B$ the symmetric difference between them. For a positive semi-definite symmetric matrix $A$, we denote its largest eigenvalue by $||A||_{OP}$. For any matrix $A$, we denote the sum of its diagonal entries by $Tr(A)$, and by $||A||_{HS}^2$ we denote the sum of the eigenvalues of the matrix $A^T A$. For a random vector $X$, we denote $\mathrm{Cov}(X) = \EE[X \otimes X] - \EE[X] \otimes \EE[X]$, the covariance matrix of $X$. Finally, for a continuous time stochastic process $X_t$ adapted to a filtration $\mathcal{F}_t$, we denote by $[X]_t$ the quadratic variation of $X_t$ between time $0$ and $t$. For a pair of continuous time stochastic processes $X_t, Y_t$, the quadratic covariation will be denoted by $[X, Y]_t$. By $d X_t$ we denote the It\^{o} differential of $X_t$, which we understand as a pair of predictable processes $(\sigma_t, \mu_t)$ such that $X_t$ satisfies  stochastic differential equation $d X_t = \sigma_t d B_t + \mu_t dt$ where $B_t$ is a Brownian motion. We also denote
$$
\Phi(s) = \frac{1}{\sqrt{2 \pi}} \int_{-\infty}^{s} e^{-x^2/2} dx
$$
the Gaussian cumulative distribution function, and write $\Psi(s) = \Phi^{-1}(s)$. \\ \\

\emph{Acknowledgments} ~ I am grateful to Elchanan Mossel for inspiring me to work on this problem and for several fruitful discussions in which, in particular, he suggested that the method should work for $q$-stability and for the isoperimetric problem. I am deeply thankful to Bo'az Klartag for a very useful discussion in which he gave me the idea of using Talagrand's theorem in the proof of Lemma \ref{estup}. Finally, I thank Gil Kalai for introducing me to this topic and Yuval Peres, Joe Neeman, Joseph Lehec and James Lee for useful comments on a preliminary version of this note.

\subsection{Discussion}
Before we move on to the proofs, we would like to discuss the optimality of our estimates and suggest possible future research.

First, consider the robustness inequality (\ref{robustineq}). It is easy to see
that the dependence of the upper bound on $\eps(A)$ is tight: for $\eps \geq 0$
define
$$
A_\eps = (-\infty, \Psi(1/2-\eps)] \cup [\Psi(3/4), \Psi(3/4 + \eps)],
$$
where $\Psi(x) = \Phi^{-1}(x)$ is the inverse Gaussian cumulative distribution function.
We claim that this set demonstrates that the upper bound is tight. It is easy to check that $\eps(A) \sim \eps$ and that if $X,Y$ are Gaussian variables whose correlation is $0.01 < \rho < 0.99$ then
$$
\PP(Y \in (-\infty, \Psi(1/2-\eps)) ~ | X \in [\Psi(1/2-\eps), 0] ) >
$$
$$
(1+c) \PP(Y \in (-\infty, \Psi(1/2-\eps)) ~ | X \in [\Psi(3/4), \Psi(3/4 + \eps)] ) \geq (1+c) c'
$$
for all $\eps < 1/4$ where $c,c'>0$ are constants which do not depend on $\eps$. Therefore
$$
\St_\rho(H(A)) = \St_\rho ((-\infty, \Psi(1/2 - \eps)]) + 2 \eps \PP(Y \in (-\infty, \Psi(1/2-\eps)] ~ | X \in [\Psi(1/2-\eps), 0] ) + o(\eps)
$$
and
$$
\St_\rho(A) = \St_\rho ((-\infty, \Psi(1/2 - \eps)]) + 2 \eps \PP(Y \in (-\infty, \Psi(1/2-\eps)] ~ | X \in [\Psi(3/4), \Psi(3/4 + \eps)] ) + o(\eps) 
$$
so
$$
\St_\rho(H(A)) - \St_\rho(A) > 2 c' c  \eps + o(\eps).
$$
This shows that the dependence of the upper bound on $\eps$ cannot be improved. One could hope that the logarithmic term in the lower bound is fully removed thus obtaining a tight bound. Alas, if we define
$$
A_\eps = (-\infty, \Psi(1/2-\eps)] \cup [\Psi(1 - \eps), \infty)
$$
then it is not hard to see that $\eps(A) \sim \eps \sqrt{|\log(\eps)|}$. On the other hand, we have
$$
\St_\rho(H(A_\epsilon)) \leq \St_\rho ((-\infty, \Psi(1/2 - \eps)]) + O(\eps)
$$
and
$$
\St_\rho(A_\epsilon) \geq \St_\rho ((-\infty, 1/2 - \eps]).
$$
It follows that
$$
\St_\rho(H(A_\epsilon)) - \St_\rho(A_\epsilon) = O(\epsilon)
$$
so at least a term of the order $\sqrt{|\log \eps(A)|}$ is necessary. \\

It seems from the proofs that this type of "tail" phenomenon might be the only reason for which the logarithmic term is needed. We would like to formulate a conjecture suggesting that a slightly perturbed metric could provide a tight bound. To define this metric we write $v = v(H(A)) / |v(H(A))|$ (where $v(\cdot)$ is defined in equation (\ref{vA})), and let $\mu$ and $\nu$ be the push-forward under the map $x \to \langle v, x \rangle$ of the Gaussian measure restricted to the sets $A$ and $H(A)$ respectively. Denote by $f(x)$ and $g(x)$ the corresponding densities of $\mu$ and $\nu$ with respect to $\gamma^1$.

Inspired by equation (\ref{inspiration}) below, we define
$$
\tilde \eps_\rho (A) = \left | \int_{\RR} \Phi \left ( \frac{\rho x - \alpha}{\sqrt{1-\rho^2}}  \right ) (g(x) - f(x)) d \gamma^1(x) \right |.
$$
\begin{conjecture}
For every $0 < s < 1$, there exist constants $C_s, c_s > 0$ such that the following holds: Let $0 < \rho < 1$ and let $A \subset \RR^n$ be a measurable set satisfying $\eps(A) < e^{-1/\rho}$ and $0 < \gamma^n(A) < 1$. Then
\begin{equation}
C_{\gamma(A)} \tilde \eps_\rho(A) \geq \St_\rho(H(A)) - \St_\rho(A) \geq c_{\gamma(A)} \tilde \eps_\rho(A) (1 - \rho).
\end{equation}
In particular, the expression $\St_\rho(H(A)) - \St_\rho(A)$ is equivalent, up to constants depending only on $\rho$ and $\gamma(A)$, to an expression depending only on the marginal of the set $A$ on the direction $v$.
\end{conjecture}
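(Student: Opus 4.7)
The plan is to exploit an algebraic identity for the upper bound and reduce the lower bound to a purely one-dimensional problem via conditional expectation. Let $h := \mathbf{1}_{H(A)} - \mathbf{1}_A$ and write $v$ for the unit vector in the direction of $v(H(A))$. Using self-adjointness of $P_\rho$ one obtains
$$
\St_\rho(H(A)) - \St_\rho(A) = 2\int h\, P_\rho \mathbf{1}_{H(A)}\, d\gamma - \int h\, P_\rho h\, d\gamma.
$$
Since $P_\rho \mathbf{1}_{H(A)}(x) = \Phi\bigl((\rho\langle x, v\rangle - \alpha)/\sqrt{1-\rho^2}\bigr)$ depends only on $\langle x, v\rangle$, a Fubini calculation identifies the cross term with $\int(g-f)(t)\,\Phi\bigl((\rho t - \alpha)/\sqrt{1-\rho^2}\bigr)\, d\gamma^1(t) = \tilde\eps_\rho(A)$, the positive sign being forced by combining the identity with Borell's inequality $\St_\rho(H(A)) \geq \St_\rho(A)$ and the non-negativity of $\int h P_\rho h\, d\gamma$. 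Dropping the latter term yields the upper bound, with $C_{\gamma(A)} = 2$.

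For the lower bound I decompose $\mathbf{1}_A = \bar f + r$, where $\bar f(x) := f(\langle x, v\rangle)$ is the conditional expectation of $\mathbf{1}_A$ given $\langle\cdot, v\rangle$ and $r$ has vanishing such conditional expectation. Since $P_\rho$ preserves this splitting,
$$
\St_\rho(H(A)) - \St_\rho(A) = D_\rho(f) - \int r\, P_\rho r\, d\gamma, \qquad D_\rho(f) := \int g\, P_\rho^{1D} g\, d\gamma^1 - \int f\, P_\rho^{1D} f\, d\gamma^1.
$$
A decisive observation is that the degree-one Hermite component of $r$ vanishes: since $v$ is parallel to the centroid $v(A)$, one has $\int \bar f\, x\, d\gamma = v(A) = \int \mathbf{1}_A x\, d\gamma$, and hence $\int r\, x\, d\gamma = 0$. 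Consequently $\int r\, P_\rho r\, d\gamma \leq \rho^2 \int r^2\, d\gamma = \rho^2 \int f(1-f)\, d\gamma^1$, where the last identity follows from a direct Fubini computation.

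The conjecture thus reduces to proving the one-dimensional inequality
$$
D_\rho(f) - \rho^2 \int f(1-f)\, d\gamma^1 \;\geq\; c_{\gamma(A)}(1-\rho)\,\tilde\eps_\rho(A)
$$
for every $[0,1]$-valued $f$ on $\RR$ with $\int f\, d\gamma^1 = \gamma(H(A))$, and this is the main obstacle. I would attack it variationally: fix the value of $\tilde\eps_\rho(A)$ and minimize the left-hand side over admissible $f$ via Lagrange multipliers. The $[0,1]$-constraint should force extremizers to be indicators of level sets of an affine combination of $P_\rho^{1D} g$ and a constant; on this low-dimensional family an explicit computation should produce the bound, with the $(1-\rho)$ factor arising from a Taylor expansion of the Mehler kernel near $\rho = 1$, where both sides vanish. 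The delicate point will be uniformity in Hermite degree: the $[0,1]$-constraint must be used crucially to rule out high-frequency perturbations, for which $\tilde\eps_\rho(A)$ can be arbitrarily small compared to $\int f(1-f)\, d\gamma^1$.
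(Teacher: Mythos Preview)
This statement is labelled a \emph{conjecture} in the paper; the paper does not supply a proof of the lower bound, and only the upper bound is implicitly established (in subsection~3.4, where the paper proves the stronger estimate $\St_\rho(H(A))-\St_\rho(A)\le 2I$ with $I=\tilde\eps_\rho(A)$ and then bounds $I$ further by $\eps(A)$). Your derivation of the upper bound via the polarization identity and positivity of $P_\rho$ is correct and essentially coincides with the paper's argument; the sign of $I$ can also be read off directly from the integrand in the paper's formula for $I$, as done there.

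The gap is in the lower bound. Your decomposition $\St_\rho(H(A))-\St_\rho(A)=D_\rho(f)-\langle r,P_\rho r\rangle$ and the observation that $r$ has no degree~$0$ or degree~$1$ Hermite component are both correct. The problem is that the bound $\langle r,P_\rho r\rangle\le\rho^2\|r\|_2^2=\rho^2\int f(1-f)\,d\gamma^1$ is too lossy: the resulting one-dimensional inequality
\[
D_\rho(f)-\rho^2\int f(1-f)\,d\gamma^1\ \ge\ c_{\gamma(A)}(1-\rho)\,\tilde\eps_\rho(A)
\]
is \emph{false} in general. Take $n=2$, $\alpha=0$, and for small $\delta>0$ let $A=\{x_1\ge\delta\}\cup\{|x_1|<\delta,\ |x_2|<c\}$ with $c$ chosen so that $\gamma^1([-c,c])=\tfrac12$. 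Then $\gamma(A)=\tfrac12$, the centroid of $A$ lies along $e_1$, and the marginal is $f=\mathbf 1_{[\delta,\infty)}+\tfrac12\mathbf 1_{[-\delta,\delta)}$. Writing $\rho=1-\eta$ and choosing $\delta\ll\eta$ (say $\delta=\eta^2$), one computes $\tilde\eps_\rho(A)\asymp\delta^2/\sqrt\eta$, so by your own upper bound $\St_\rho(H)-\St_\rho(A)\lesssim\delta^2/\sqrt\eta$. On the other hand $r=\mathbf 1_{|x_1|<\delta}\cdot(\mathbf 1_{|x_2|<c}-\tfrac12)$ factors, and a direct estimate of the second factor (using that $\mathbf 1_{|x_2|<c}-\tfrac12$ has jumps) gives
\[
\rho^2\|r\|_2^2-\langle r,P_\rho r\rangle\ \gtrsim\ \delta\sqrt\eta.
\]
Since $D_\rho(f)-\rho^2\int f(1-f)=(\St_\rho(H)-\St_\rho(A))-(\rho^2\|r\|_2^2-\langle r,P_\rho r\rangle)$, the left-hand side of your one-dimensional inequality is $\lesssim\delta^2/\sqrt\eta-c\,\delta\sqrt\eta$, which is \emph{negative} for $\delta\ll\eta$, while the right-hand side is non-negative. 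Thus no variational argument can rescue this reduction: the inequality you reduce to simply does not hold. The information discarded in passing from $\langle r,P_\rho r\rangle$ to $\rho^2\|r\|_2^2$ --- namely how the Hermite mass of $r$ is distributed across degrees $\ge 2$ --- is essential, and that information is genuinely $n$-dimensional, depending on $A$ and not only on $f$.
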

\bigskip
Finally, let us discuss the optimality of Corollary \ref{cortv}. We claim that the exponent $2$ of the expression $\delta(A)^2$ appearing in equation (\ref{cortv}) cannot be improved. Consider the example
$$
B_\eps = (-\infty, \Psi(1/2-\eps)] \cup [\Psi(1/2 + \eps), \Psi(1/2 + 2\eps) ].
$$
It is easy to verify that $\eps(B) \sim \eps^2$ while $\delta(B) = \eps$. It follows
from the upper bound in Theorem \ref{thmrobust} that as $\eps \to 0$, the dependence of the deficit on $\delta(A)$ is correct, maybe up to the logarithmic factor. We conjecture that the logarithmic factor in this corollary can be removed.

\section{Proof of Theorem \ref{thm1}} \label{sec2}

This section is dedicated to the proof of Theorem \ref{thm1}, and is divided into three parts:
In the first part we introduce a stochastic process associated with the set $A \subset \RR^n$, upon the analysis of which the proof is based. The second part is an overview of the main steps of our proof and in the third part we will provide the detailed argument.

\subsection{The process $S_t$}

First define for all $v \in \RR^n$ and $\sigma > 0$,
$$
\gamma_{v, \sigma} (x) = \frac{1}{\sigma^n (2 \pi)^{n/2}} \exp \left (- \frac{1}{2 \sigma^2} |x - v|^2 \right ),
$$
the density of the Gaussian centered at $v$ with covariance matrix $\sigma^2 \mathrm{Id}$, and abbreviate $\gamma(x) = \gamma_{0,1}(x)$. Let $X$ be a standard Gaussian random vector in $\RR^n$. \\ \\
Given a measurable set $A \subset \RR^n$, our goal is to analyse the quantity
$$
\St^q_\rho(A) = \EE_X \left [ \left ( \int_A  \gamma_{\sqrt{\rho}X, \sqrt{1 - \rho}}(x) dx \right )^q \right ].
$$
Instead of considering the vector $X$, let $W_t$ be a standard Brownian motion in $\RR^n$, adapted to a filtration $\FF_t$, with an underlying probability space $(\Omega_1, \Sigma_1, P_1)$. Clearly $\sqrt{\rho} X$ has the same distribution of $W_\rho$, and therefore
$$
\St^q_\rho(A) = \EE \left [ \left ( \int_A  \gamma_{W_\rho, \sqrt{1 - \rho}}(x) dx \right )^q \right ].
$$
The main idea of the proof is to consider the process
$$
S_t = \PP(W_1 \in A~ | \FF_t).
$$
Since conditioned on $\FF_t$, the vector $W_1$ is a Gaussian vector with expectation $W_t$ and covariance matrix $\sqrt{1-t} \mathrm{Id}$, we have 
$$
S_t = \int_A  \gamma_{W_t, \sqrt{1 - t}}(x) dx, ~~ \forall 0 \leq t < 1,
$$
so that our quantity of interest becomes
$$
\St_\rho^q(A) = \EE[S_\rho^q].
$$
\subsection{Overview of the main steps}

For simplicity, in this subsection we will consider the basic notion of noise stability, assuming that $q=2$. The last formula becomes,
$$
\St_\rho(A) = \EE[S_\rho^2].
$$
Since the process $S_t$ is, by definition, a martingale, we have by the It\^{o} isometry
$$
\St_\rho(A) = S_0^2 + \EE [S]_\rho
$$
where $[S]_t$ denotes the quadratic variation of the process $S_t$. Noting that $S_t$ is a smooth function of $W_t$, a straightforward calculation using It\^{o}'s formula gives
\begin{equation} \label{dQVint}
d [S]_t = (1-t)^{-1} q^2(A_t) dt
\end{equation}
where $q(\cdot)$ is the function defined in equation \eqref{defQ} and $A_t = \frac{A - W_t}{\sqrt{1-t}}$. Moreover, it turns out that $\gamma(A_t) = S_t$.

The last formula suggests that the time derivative of the quadratic variation depends on the distance of the Gaussian center of mass of a set, whose measure is $S_t$, from the origin. Formula \eqref{Fourier-Hermite} may shed some light on this fact: according to this formula, we see that the quantity $\frac{d}{d \rho} \St_\rho(A) |_{\rho = 0}$ is indeed equal to the energy of the first order Fourier-Hermite coefficients of the function $\mathbf{1}_A(\cdot)$, which is exactly $q(A)$. The set-valued process $A_t$ can be thought of as the evolution of the set $A$ as seen by the measures $W_1 | \FF_t$ (more accurately, $A_t$ is the set $A$ under a linear transformation which pushes forward the measure of $W_1$ conditioned on $\FF_t$ to the standard Gaussian measure). 

Since we would ultimately like to compare the noise stability of $A$ with that of the half-space $H(A)$, we consider the analogous process
$$
Q_t = \PP \bigl (\tilde W_1 \in H(A) | \tilde \FF_t \bigr)
$$
where $\tilde {W_t}$ is a standard Brownian motion adapted to a filtration $\tilde \FF_t$, so that 
$$
\St_\rho(H(A)) = S_0^2 + \EE [Q]_\rho.
$$
By carrying the same calculation as above, we get 
\begin{equation} \label{dQVint2}
d [Q]_t = (1-t)^{-1} q^2(\tilde A_t) dt
\end{equation}
where $\tilde A_t = \frac{H(A) - \tilde {W_t}}{\sqrt{1-t}}$ is a half-space whose Gaussian measure is $Q_t$. 

The proof of the theorem is reduced to showing the inequality
\begin{equation} \label{nts111}
\EE \left [ \int_0^\rho (1-t)^{-1} q^2(A_t) dt \right ] \leq \EE \left [ \int_0^\rho (1-t)^{-1} q^2(\tilde  A_t) dt \right ].
\end{equation}

At this point, we make the following simple geometric observation: among all sets $B \subset \RR^n$ with a prescribed Gaussian measure $0<\alpha<1$, the set which minimizes the norm of its Gaussian center of mass is a half-space (Claim \ref{claim1} below). In other words, we have for all measurable $B \subset \RR^n$,
\begin{equation} \label{ineqclaim1}
q(B) \leq q(H(B)).
\end{equation}

Na\"{i}vely, one could hope that applying the inequality \eqref{ineqclaim1} directly to the sets $A_t$ and $\tilde A_t$ (which are indeed defined as the evolutions of the set $A$ and its corresponding half-space) could establish equation \eqref{nts111}. However, note that in order to do this, we must somehow have that $\gamma(A_t) = \gamma(\tilde A_t)$, or in other words, we need to somehow couple the processes $W_t$ and $\tilde W_t$ so that $S_t = Q_t$. Alas, this is impossible to achieve. However, it gives rise to the idea of considering the two processes under the change of time described next.

The next step is where the theory of stochastic calculus plays the most crucial role. Since both processes $S_t$ and $Q_t$ are martingales, by applying the Dambis / Dubins-Schwartz theorem, we may consider each of them as a time-changed Brownian motion. Moreover, an application of the disintegration theorem will allow us to couple between those Brownian motions in the following way: we will prove that there exists a probability space containing both processes $S_t, Q_t$ and a another process $B(T)$ such that $B(T) - S_0$ is a Brownian motion and such that
$$
S_t = B([S]_t), ~~ Q_t = B([Q]_t), ~~ \forall 0 \leq t \leq 1.
$$
Defining $\tau_1(T)$ and $\tau_2(T)$ as the inverse functions of $t \to [S]_t$ and $t \to [Q]_t$ respectively, according to the last equation we have $S_{\tau_1(T)} = Q_{\tau_2(T)}$ which implies that $\gamma(A_{\tau_1(T)}) = \gamma(\tilde A_{\tau_2(T)})$ for all $0 < T < [S]_1$. Finally, we can use equation \eqref{ineqclaim1} with $B = A_{\tau_1(T)}$ to assert that
$$
q(A_{\tau_1(T)}) \leq q \bigl (\tilde A_{\tau_2(T)} \bigr ).
$$
Together with the formulas for $d [S]_t$ and $d [Q]_t$ (formulas \eqref{dQVint} and \eqref{dQVint2} above), we get that
$$
\frac{d}{d T} \tau_1(T) \geq \frac{d}{d T} \tau_2(T), ~~ \forall 0 < T < [S]_1.
$$
This immediately implies that $[S]_\rho \leq [Q]_\rho$ almost surely, which completes the proof of the inequality. The analysis of the equality case will be straightforward.

\subsection{The proof}

We begin with the following lemma, which will be helpful to us in calculating It\^{o} differentials related to the process $S_t$.
\begin{lemma} \label{LemdiffF}
Denote
$$
F_t(x) = \gamma_{W_t, \sqrt{1-t}}(x).
$$
For every $x \in \RR^n$ the process $F_t(x)$ is a local martingale satisfying the stochastic differential equation
\begin{equation} \label{stochastic}
F_0(x) = \gamma(x), ~~ d F_t(x) = (1-t)^{-1} F_t(x) \langle x - W_t, d W_t \rangle.
\end{equation}
Moreover, for any measurable function $\phi:\RR^n \to \RR$ satisfying $|\phi(x)| < C_1 + C_2 |x|^p$ for some constants $C_1,C_2,p>0$, we have that the process $t \to \int_{\RR^n} \phi(x) F_t(x) dx$ is a martingale with respect to the filtration $\FF_t$, which satisfies
\begin{equation} \label{stochasticfub}
d \int_{\RR^n} \phi(x) F_t(x) dx = (1-t)^{-1} \left \langle \int_{\RR^n} \phi(x) (x - W_t) F_t(x) dx, d W_t \right \rangle.
\end{equation}
\end{lemma}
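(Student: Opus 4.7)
The plan is to handle the two parts separately, using essentially a single underlying fact: the function $u_x(t,w) = \gamma_{w,\sqrt{1-t}}(x)$, regarded as a function of $(t,w) \in [0,1) \times \RR^n$, satisfies the backward heat equation $\partial_t u_x + \tfrac{1}{2}\Delta_w u_x = 0$. This is the content of $u_x(t,w) = \PP(W_1 \in dx \mid W_t = w)/dx$ being a transition density.

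For the first part, I would write $F_t(x) = u_x(t, W_t)$ and apply It\^o's formula directly. Taking the explicit formula
$$u_x(t,w) = \frac{1}{(2\pi(1-t))^{n/2}} \exp\left(-\frac{|x-w|^2}{2(1-t)}\right),$$
a direct computation (differentiating $\log u_x$) gives $\partial_t u_x = u_x\bigl(\tfrac{n}{2(1-t)} - \tfrac{|x-w|^2}{2(1-t)^2}\bigr)$, $\nabla_w u_x = (1-t)^{-1}(x-w)\, u_x$, and $\Delta_w u_x = u_x \bigl(-\tfrac{n}{1-t} + \tfrac{|x-w|^2}{(1-t)^2}\bigr)$; the drift terms cancel exactly, confirming the heat equation. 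It\^o then yields $dF_t(x) = \langle \nabla_w u_x(t,W_t), dW_t \rangle = (1-t)^{-1} F_t(x) \langle x - W_t, dW_t\rangle$, and since the drift vanishes $F_t(x)$ is a local martingale.

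For the second part, I would use the probabilistic interpretation $\int \phi(x) F_t(x)\, dx = \EE[\phi(W_1) \mid \FF_t]$, which is immediate from the fact that $W_1 \mid \FF_t \sim \cN(W_t, (1-t)\mathrm{Id})$. The polynomial growth condition $|\phi(x)| \leq C_1 + C_2|x|^p$ ensures $\phi(W_1) \in L^1$, so this is a genuine martingale. To identify its It\^o differential, set $v(t,w) = \int \phi(x) u_x(t,w)\, dx$; differentiation under the integral sign (justified below) shows $v$ also satisfies $\partial_t v + \tfrac{1}{2}\Delta_w v = 0$, and $\nabla_w v(t,w) = (1-t)^{-1} \int \phi(x)(x-w) F_t(x)\, dx$. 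Applying It\^o to $v(t, W_t)$ gives exactly formula \eqref{stochasticfub}.

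The only technical obstacle is the interchange of differentiation and integration needed to pass the derivatives inside $\int \phi(x) u_x(t,w)\, dx$. This is where the hypothesis $|\phi(x)| \le C_1 + C_2|x|^p$ is used: for each fixed $t_0 < 1$ and each bounded neighborhood of $w$, one can bound the relevant partial derivatives of $u_x(t,w)$ uniformly on this neighborhood by a function of the form $(\text{polynomial in }|x|) \cdot e^{-c|x|^2}$ for some $c>0$, which is integrable. Standard dominated convergence then justifies moving $\partial_t, \nabla_w, \Delta_w$ inside the integral, giving the backward heat equation for $v$ and the stated formula for $\nabla_w v$; no essentially new work is required beyond verifying these pointwise bounds.
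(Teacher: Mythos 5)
Your proof is correct and follows essentially the same approach as the paper's: compute the partial derivatives of the Gaussian kernel, observe that the backward heat equation $\partial_t u + \tfrac{1}{2}\Delta_w u = 0$ makes the It\^o drift vanish, and for the integrated version justify differentiation under the integral sign via the polynomial-growth bound while obtaining the martingale property from the identity $\int \phi(x) F_t(x)\,dx = \EE[\phi(W_1)\mid\FF_t]$. Incidentally, your signs for $\partial_t u_x$ and $\Delta_w u_x$ are the correct ones; the paper's appendix carries compensating sign errors on the $n/(1-t)$ terms in both displayed formulas, which cancel and leave the It\^o calculation unaffected.
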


The proof, which  is a straightforward calculation, is postponed to the appendix. \\
\begin{remark}
Equation (\ref{stochastic}) could be seen as a stochastic evolution equation on the space of Gaussian densities. 
Equations of a similar nature, where the initial density is an arbitrary function seem to be rather useful tool for proving concentration inequalities,
as demonstrated in \cite{E1,E2,E3}.
\end{remark}

Using the notation of the lemma, we have
\begin{equation} \label{defSt}
S_t = \int_A F_t(x) dx, ~~ \forall 0 \leq t < 1.
\end{equation}
We can now calculate, using equation \eqref{stochasticfub},
\begin{equation} \label{dst}
d S_t = d \int_A F_t(x) dx =
\end{equation}
$$
(1-t)^{-1} \left \langle \int_{A}  (x - W_t) F_t(x) dx , d W_t \right \rangle =
$$
$$
\frac{(1-t)^{-1}}{(2 \pi (1-t))^{(n/2)}}  \left \langle  \int_{A} (x - W_t) \exp \left (- \frac{1}{2(1-t)} |x - W_t|^2 \right ) dx, d W_t \right  \rangle =
$$
(substituting $y =  \frac{x - W_t}{ \sqrt{1 - t} }$)
$$
(1-t)^{-1/2} \left \langle \int_{ \frac{A - W_t}{\sqrt{1-t}}} y \gamma(y) dy , d W_t \right  \rangle.
$$
Recall that for all measurable $B \subset \RR^n$, we define
$$
q(B) = \left | \int_{B} x \gamma(x) dx \right |
$$
and write
$$
A_t = \frac{A - W_t}{\sqrt{1-t}}.
$$
Under this notation, equations \eqref{defSt} and \eqref{dst} become
\begin{equation} \label{defst2}
S_t = \int_{A_t} \gamma(x) dx = \gamma(A_t)
\end{equation}
and
\begin{equation} \label{dSt}
d S_t = (1-t)^{-1/2} \left \langle \int_{A_t} x d \gamma(x), d W_t \right \rangle.
\end{equation}
The last equation also gives
\begin{equation} \label{quadvar}
d [S]_t = (1-t)^{-1} q^2(A_t) dt
\end{equation}
where $[S]_t$ denotes the quadratic variation of the process $S_t$. \\ \\
In particular, we see that $S_t$ is an It\^{o} process, so thanks to It\^{o}'s formula,
$$
d S_t^q = q S_t^{q-1} d S_t + \frac{1}{2} q(q-1) S_t^{q-2} d [S]_t.
$$
Since $S_t$ is a bounded martingale, by integrating the last equation and taking expectation we get
\begin{equation} \label{needtoshow}
\St^q_\rho(A) =  \EE[S_\rho^q] = S_0^q + \EE \left [ \int_0^\rho d S_t^q \right ] = S_0^q + \frac{1}{2} q(q-1) \EE \left [ \int_0^\rho S_t^{q-2} d [S]_t \right ]
\end{equation}
and, in particular, by taking $q=2$ we see that $\St_\rho(A) = \gamma(A)^2 + \EE [S]_\rho$. \\ \\

Our goal is to compare $\St_\rho^q(A)$ with $\St_\rho^q(H(A))$. To that end, we want to define the process $Q_t$ to be an analogous process to $S_t$ where the initial set $A$ is replaced by its corresponding half-space $H(A)$. In other words, we define
\begin{equation} \label{defQt}
Q_t = \PP \left . \left (\tilde W_1 \in H(A) ~ \right | \tilde \FF_t \right ) = \int_{H(A)}  \gamma_{\tilde W_t, \sqrt{1 - t}}(x) dx,
\end{equation}
where $\tilde W_t$ is a standard Brownian motion in $\RR^n$, adapted with to a filtration $\tilde F_t$. At this point, we consider $\tilde W_t$ to be a process defined over a different probability space, which we denote by $(\Omega_2, \Sigma_2, P_2)$.

In analogy with (\ref{needtoshow}) we have
\begin{equation} \label{needtoshow2}
\St^q_\rho(H(A)) = \EE[Q_\rho^q] = Q_0^q + \frac{1}{2} q(q-1) \EE \left [ \int_0^\rho Q_t^{q-2} d [Q]_t \right ]
\end{equation}
and since $Q_0 = \gamma(H(A)) = \gamma(A) = S_0$, the proof is reduced to showing that,
\begin{equation} \label{eq2}
 \EE \left [ \int_0^\rho Q_t^{q-2} d [Q]_t \right ] \geq  \EE \left [ \int_0^\rho S_t^{q-2} d [S]_t \right ]
\end{equation}
with equality only if $\gamma(A \Delta H(A))=0$. \\

By slight abuse of notation, we also define a function $q:[0,1] \to \RR$ by
$$
q(s) = - \int_{-\infty}^{\Phi^{-1}(s)} x d \gamma^1(x)
$$
(where $\Phi(\cdot)$ is the standard Gaussian cumulative distribution function) so that $q(\gamma(B)) = q(H(B))$
for all measurable $B \subset \RR^n$.

Clearly, we will have $Q_0 = \gamma(H(A)) = S_0$. Moreover, observe that if $B$ is a half-space then $q(B) = q(\gamma(B))$. Therefore by repeating the same calculation in which equations (\ref{defst2}) and (\ref{quadvar}) were derived, we have for all $0 \leq t < 1$,
$$
Q_t = \gamma \left ( \frac{H(A) - \tilde W_t}{ \sqrt{1-t}} \right )
$$
and
\begin{equation} \label{quadvarQ}
d [Q]_t = (1-t)^{-1} q \left ( \frac{H(A) - \tilde W_t}{ \sqrt{1-t}} \right )^2 dt = (1-t)^{-1} q \left ( Q_t \right )^2 dt.
\end{equation}
~ \\
Recall that $S_t$ and $Q_t$ are martingales. According to the Dambis / Dubins-Schwartz theorem, there exists an enlargement $(\Omega_1', \Sigma_1', P_1')$ of the probability space $(\Omega_1, \Sigma_1, P_1)$ which supports a process $B(t)$ such that $B(t) - S_0$ is a standard Brownian motion and such that
$$
S_t = B( [S]_t ), ~~ \forall 0 \leq t \leq 1.
$$
In the same manner, we deduce that there exists an enlargement $(\Omega_2', \Sigma_2', P_2')$ of the probability space $(\Omega_2, \Sigma_2, P_2)$ supporting another process $\tilde B(t)$ such that $\tilde B(t) - Q_0$ is a standard Brownian motion and such that
$$
Q_t = \tilde B( [Q]_t ), ~~ \forall 0 \leq t \leq 1.
$$ 

Denote also $T_f = [S]_1, \tilde T_f = [Q]_1$. Note that since the Gaussian measure has a strictly positive density and since $0 <\gamma(A) < 1$, it follows by definition of the processes $S_t, Q_t$ that almost-surely, $0 < Q_t < 1$ and $0 < S_t < 1$ for all $0 \leq t < 1$. Moreover, we clearly have that $S_1 \in \{0,1\}$ and $Q_1 \in \{0,1\}$. Consequently, we have almost-surely
\begin{equation} \label{eqTf}
T_f = \min \bigl \{t>0; ~ B(t) \in \{0,1\} \bigr \}, ~~ \tilde T_f = \min \bigl \{t>0; ~ \tilde B(t) \in \{0,1\} \bigr \}.
\end{equation}
In particular, $T_f, \tilde T_f$ are stopping times with respect to the filtrations corresponding to $B(T), \tilde B(T)$ respectively.

Since both processes $B(T), \tilde B(T)$ are distributed according to the same law, at this point we would like to claim that we may \emph{couple} between the two so that they are defined on the same probability space in a way that the following equation holds almost surely:
\begin{equation} \label{eqcouple}
B(t) = \tilde B(t), ~~ \forall t \geq 0.
\end{equation}

The following theorem summarizes the exact result that we need, in order to establish the existence of this coupling. Its proof relies on an application of the disintegration theorem which enables the construction of a relative product of the corresponding measure spaces, upon conditioning on the Brownian motions. For the precise definitions and notation used in this theorem, see \cite[Chapter 5]{Furstenberg}.

\begin{theorem}
Let $(\Omega_1, \Sigma_1, P_1), ~ (\Omega_2, \Sigma_2, P_2)$ be two regular probability spaces. Let $B_1: \Omega_1 \times [0, \infty) \to \RR, B_2: \Omega_2 \times [0, \infty) \to \RR$ be two standard Brownian motions over the probability spaces $\Omega_1,\Omega_2$ respectively. There exists a measure space $(\Omega, \Sigma, P)$ and two measurable functions $\pi_1:\Omega \to \Omega_1, \pi_2: \Omega \to \Omega_2$ such that \\
(i) $P_1$ is the push-forward of $P$ under $\pi_1$. \\
(ii) $P_2$ is the push-forward of $P$ under $\pi_2$. \\
(iii) For $P$-almost every $\omega \in \Omega$, one has $B_1(\pi_1(\omega), t) = B_2(\pi_2(\omega), t)$ for all $t \in [0, \infty)$.
\end{theorem}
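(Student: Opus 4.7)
The plan is to build the coupled probability space as a \emph{relative product} of $(\Omega_1,\Sigma_1,P_1)$ and $(\Omega_2,\Sigma_2,P_2)$ fibered over the Wiener space, gluing the two spaces along the common trajectory of the Brownian motion. The key tool is the disintegration theorem for regular probability spaces with Polish target.

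First, I would introduce the canonical path space $\mathcal{C} = C([0,\infty),\RR)$, equipped with the Borel $\sigma$-algebra of the topology of uniform convergence on compacts, which is Polish. For each $i \in \{1,2\}$, define the measurable map $\phi_i : \Omega_i \to \mathcal{C}$ by $\phi_i(\omega)(t) = B_i(\omega,t)$. By the definition of a standard Brownian motion, the push-forwards $(\phi_i)_* P_i$ both coincide with the standard Wiener measure $\mathcal{W}$ on $\mathcal{C}$. Since $(\Omega_i,\Sigma_i,P_i)$ is regular and $\mathcal{C}$ is Polish, the disintegration theorem supplies, for each $i$, a family $\{P_i^{\mathbf{w}}\}_{\mathbf{w} \in \mathcal{C}}$ of probability measures on $\Omega_i$ such that $P_i^{\mathbf{w}}$ is concentrated on the fiber $\phi_i^{-1}(\mathbf{w})$ for $\mathcal{W}$-almost every $\mathbf{w}$, the map $\mathbf{w} \mapsto P_i^{\mathbf{w}}(E)$ is $\mathcal{W}$-measurable for every $E \in \Sigma_i$, and
$$
P_i(E) = \int_{\mathcal{C}} P_i^{\mathbf{w}}(E) \, d\mathcal{W}(\mathbf{w}).
$$

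Next, I would set $\Omega = \Omega_1 \times \Omega_2$ with the product $\sigma$-algebra $\Sigma = \Sigma_1 \otimes \Sigma_2$, and let $\pi_1,\pi_2$ be the coordinate projections. The coupled measure is defined by
$$
P(E) = \int_{\mathcal{C}} (P_1^{\mathbf{w}} \otimes P_2^{\mathbf{w}})(E) \, d\mathcal{W}(\mathbf{w}), \qquad E \in \Sigma,
$$
where measurability in $\mathbf{w}$ of the integrand follows from the standard properties of the disintegration together with the monotone class theorem (first for product rectangles $E_1 \times E_2$, then extended to all of $\Sigma$). Properties (i) and (ii) then fall out by taking $E = E_1 \times \Omega_2$ (respectively $\Omega_1 \times E_2$) in the definition of $P$ and using the disintegration identity. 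For (iii), note that if $\omega = (\omega_1,\omega_2)$ lies in the support of $P_1^{\mathbf{w}} \otimes P_2^{\mathbf{w}}$ then $\phi_1(\omega_1) = \mathbf{w} = \phi_2(\omega_2)$, so $B_1(\pi_1(\omega),t) = \mathbf{w}(t) = B_2(\pi_2(\omega),t)$ for every $t \geq 0$; this event has full $P$-measure by Fubini combined with the almost-sure concentration of the disintegration on its fiber.

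The only real obstacle is the invocation of the disintegration theorem, which is precisely why the regularity hypothesis is imposed on $(\Omega_i,\Sigma_i,P_i)$ and why the target is taken to be the Polish space $\mathcal{C}$; once disintegration is available, the construction and the verification of (i)--(iii) are routine measure-theoretic bookkeeping. I would refer to \cite[Chapter 5]{Furstenberg}, as the authors do for the terminology, for the precise statement of the disintegration and regular conditional probability results used above.
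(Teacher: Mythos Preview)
Your proposal is correct and follows essentially the same approach as the paper: both construct the relative (fiber) product $\Omega_1 \times_{\mathcal{C}} \Omega_2$ over Wiener space by disintegrating each $P_i$ along the path map $\phi_i$ and then integrating the fiberwise product measures $P_1^{\mathbf{w}} \otimes P_2^{\mathbf{w}}$ against the Wiener measure, with $\pi_1,\pi_2$ the coordinate projections. The only cosmetic difference is that the paper cites \cite[Proposition~5.11]{Furstenberg} for property~(iii), whereas you spell out directly that the fiber measures are concentrated on $\phi_i^{-1}(\mathbf{w})$; the content is the same.
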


\begin{proof}
Let $Y$ be the space of continuous functions from $[0, \infty)$ to $\RR$, let $\mathcal{D}$ be the $\sigma$-algebra over $Y$ generated by Brownian events and let $\nu$ be a probability measure over $(Y, \mathcal{D})$ such that a random function with law $\nu$ is a standard Brownian motion. Remark that $B_1, B_2$ can be regarded as homomorphisms $\alpha_1: (\Omega_1, \Sigma_1, P_1) \to (Y, \mathcal{D}, \nu), ~\alpha_2:(\Omega_2, \Sigma_2, P_2) \to (Y, \mathcal{D}, \nu)$ respectively. Also, let $\mathcal{M}(\Omega_1)$ and $\mathcal{M}(\Omega_2)$ denote the spaces of probability measures on $(\Omega_1, \Sigma_1), (\Omega_2, \Sigma_2)$ respectively.

According to the disintegration theorem (see \cite[Theorem 5.8, p. 108]{Furstenberg}), there exist measurable maps from $Y$ to $\mathcal{M}(\Omega_1)$ and $\mathcal{M}(\Omega_2)$, denoted by $y \to \mu_{1,y}$ and $y \to \mu_{2,y}$ respectively, such that \\
(a) For every $f \in L^1(\Omega_1,\Sigma_1, P_1)$, we have for $\nu$-almost every $y \in Y$, $f \in L^1(\Omega_1, \Sigma_1, \mu_{1,y})$ and $\EE_{P_1}[f|B_1=y] = \int f d \mu_{1,y}$, \\
(b) For every $f \in L^1(\Omega_1,\Sigma_2, P_2)$, we have for $\nu$-almost every $y \in Y$, $f \in L^1(\Omega_2, \Sigma_2, \mu_{2,y})$ and $\EE_{P_2}[f|B_2=y] = \int f d \mu_{2,y}$. \\
We can now set $\Omega = \Omega_1 \times \Omega_2$ and $\Sigma = \Sigma_1 \times \Sigma_2$ and define the corresponding measure $P$ on $(\Omega, \Sigma)$ by the equation 
$$
P(A) := \int_Y \bigl (\mu_{1,y} \times \mu_{2,y} (A) \bigr ) d \nu(y)
$$
for all $A \in \Sigma$. Finally, let $\pi_1, \pi_2$ be the projections of $\Omega$ onto its components $\Omega_1$ and $\Omega_2$. Note that (a) and (b) imply that $\pi_1$ and $\pi_2$ are homomorphisms from $(\Omega, \Sigma, P)$ to $(\Omega_1, \Sigma_1, P_1)$ and $(\Omega_2, \Sigma_2, P_2)$ respectively so that parts (i) and (ii) of the statements of the theorem hold. Part (iii) of the theorem now follows directly from \cite[Proposition 5.11, p.112]{Furstenberg} which asserts that $\alpha_1 \circ \pi_1(\omega) = \alpha_2 \circ \pi_2(\omega)$ for $P$-almost-every $\omega \in \Omega$.
\end{proof}


\begin{remark} \label{remarkcoupling}
This theorem is in fact stronger than what we need in this setting. Further scrutiny of the process $Q_t$ reveals that there is a $1$ to $1$ mapping between this process and the process $\tilde B(t)$ stopped at $\tilde T_f$. In the end of this section, we will present an alternative justification of the existence of the coupling \eqref{eqcouple} which is based on this fact.
\end{remark}

From this point on, by applying the above theorem to the probability spaces $(\Omega_1', \Sigma_1', P_1')$ and $(\Omega_2', \Sigma_2', P_2')$ with the corresponding processes $B(t)-S_0$ and $\tilde B(t) - S_0$, we can consider the processes $W_t, \tilde W_t$, $S_t$, $Q_t$, $B(t)$ and $\tilde B(t)$ to be defined on one probability space $(\Omega, \Sigma, P)$ and assume that equation \eqref{eqcouple} is satisfied.

Define the functions $T_1(t) = [S]_t$ and $T_2(t) = [Q]_t$. Remark that, by equations \eqref{quadvar} and \eqref{quadvarQ}, these functions are almost surely continuous and strictly increasing in $0 < t < 1$. We can thus define $\tau_1, \tau_2$ to be their respective inverse functions. Equation (\ref{quadvar}) written differently is
\begin{equation} \label{eq1}
\frac{d}{dt} [S]_t = (1-t)^{-1} q^2(A_t),
\end{equation}
which, under the change of variables $t \to \tau_1(T)$ becomes
$$
T_1'( \tau_1 (T) ) = (1- \tau_1(T) )^{-1} q( A_{\tau_1(T)} )^2, ~~ \forall 0 \leq T < T_f.
$$
Remark that for all $0 < T < T_f$ one has $0 < \tau_1(T) < 1$ and $q(A_{\tau_1(T)}) > 0$. Thus, $\tau_1(T)$ is differentiable for all $0 < T < T_f$ and we can apply the inverse derivative formula to get
\begin{equation} \label{tau1diff}
\tau_1'(T) = (1 - \tau_1(T)) q(A_{\tau_1(T)})^{-2}, ~~ \forall 0 \leq T < T_f.
\end{equation}
Similarly, by (\ref{quadvarQ}) and by the fact that $Q_{\tau_2(T)} = B(T)$,
\begin{equation} \label{tau2diff}
\tau_2'(T) = (1 - \tau_2(T)) q(B(T))^{-2}, ~~ \forall 0 \leq T < T_f.
\end{equation}
Finally, define
$$
\omega_1(T) = - \log(1 - \tau_1(T)), ~~\omega_2(T) = - \log(1 - \tau_2(T)).
$$
So by (\ref{tau1diff}) and (\ref{tau2diff}), we have
\begin{equation} \label{deromega}
\omega_1(T)' = q(A_{\tau_1(T)})^{-2}, ~~ \omega_2(T)' = q(B(T))^{-2}
\end{equation}
for all $0 \leq T < T_f$. \\ \\
The following claim will provide the only inequality in the proof of the theorem. Its proof is very simple, and we postpone it to the end of the section.
\begin{claim} \label{claim1}
For all measurable $B \subset \RR^n$, one has
\begin{equation} 
q(B) \leq q(\gamma(B)) = q(H(B))
\end{equation}
with equality if and only if $\gamma(B \Delta H(B)) = 0$.
\end{claim}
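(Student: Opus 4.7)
The plan is to reduce the claim to a standard bathtub/rearrangement argument applied to the linear functional $\langle x, u\rangle$ in the direction $u$ of the centroid of $B$.

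First I would handle the trivial case: if $q(B) = 0$ then the inequality holds, with equality iff $q(H(B)) = 0$, i.e. iff $\gamma(B) \in \{0, 1\}$, in which case $B$ and $H(B)$ differ by a null set. So assume $v(B) \neq 0$ and set $u = v(B)/|v(B)|$. Then by definition of $q(B)$ as the norm of a vector,
$$q(B) = \langle v(B), u \rangle = \int_B \langle x, u \rangle \, d\gamma(x).$$
Also, the half-space $H(B)$ from \eqref{defH} is exactly $H := \{x : \langle x, u\rangle \geq c\}$ for the threshold $c = -\Phi^{-1}(\gamma(B))$ making $\gamma(H) = \gamma(B)$. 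Since $\langle x, u\rangle$ has distribution $\gamma^1$ under $\gamma$, a direct change of variables gives $\int_H \langle x, u\rangle \, d\gamma(x) = -\int_{-\infty}^{\Phi^{-1}(\gamma(B))} y\, d\gamma^1(y) = q(\gamma(B))$, matching the one-dimensional function $q$ defined just before the claim.

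Next I would carry out the rearrangement comparison. Because $\gamma(B) = \gamma(H)$, we have $\gamma(B \setminus H) = \gamma(H \setminus B)$. Then
$$\int_B \langle x, u\rangle \, d\gamma(x) - \int_H \langle x, u\rangle \, d\gamma(x) = \int_{B \setminus H} \langle x, u\rangle \, d\gamma(x) - \int_{H \setminus B} \langle x, u\rangle \, d\gamma(x).$$
On $B \setminus H$ the integrand satisfies $\langle x, u\rangle < c$, while on $H \setminus B$ it satisfies $\langle x, u\rangle \geq c$. Bounding each integral by $c$ times the corresponding Gaussian measure and using the equality of those measures shows the right-hand side is at most $0$, proving $q(B) \leq q(H(B))$.

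Finally, for the equality case, if $\gamma(B \Delta H) > 0$ then (since the Gaussian density is strictly positive) at least one of $\gamma(B \setminus H), \gamma(H \setminus B)$ is strictly positive, in which case the bound $\int_{B \setminus H} \langle x, u\rangle\, d\gamma < c \gamma(B\setminus H)$ is strict (the integrand is strictly less than $c$ on the open set $\{\langle x,u\rangle < c\}$ up to a null set), yielding a strict inequality $q(B) < q(H(B))$. Conversely, if $\gamma(B \Delta H) = 0$ then $q(B) = q(H(B))$ trivially. I do not expect any serious obstacle: the argument is a one-line bathtub principle once the direction $u$ has been correctly identified, the only mild subtlety being the boundary case $v(B)=0$, which is disposed of at the start.
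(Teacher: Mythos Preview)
Your proof is correct and follows essentially the same bathtub/rearrangement argument as the paper: identify the direction $u$ of the Gaussian centroid, then compare $\int_B \langle x,u\rangle\,d\gamma$ with $\int_{H(B)} \langle x,u\rangle\,d\gamma$ by subtracting the threshold $c$ and noting the sign of the integrand on each piece of the symmetric difference. The only organizational difference is that the paper first isolates this as a one-dimensional lemma about densities $m:\RR\to[0,1]$ (Lemma~\ref{claim1alt}, which is reused later), whereas you work directly with the sets in $\RR^n$; the substance is identical.
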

Recall that $\gamma(A_t) = S_t$, so $B(T) = \gamma(A_{\tau_1(T)})$. The above claim implies that
$$
q(A_{\tau_1(T)}) \leq q(B(T)).
$$
Consequently,
\begin{equation} \label{eqineq}
\omega_1'(T) - \omega_2'(T) = q(A_{\tau_1(T)})^{-2} - q(B(T))^{-2} \geq 0 , ~~ \forall 0 \leq T < T_f,
\end{equation}
which implies, by definition, that $\tau_1(T) \geq \tau_2(T)$ for all $1 \leq T < T_f$. In other words
\begin{equation} \label{eqleq}
[S]_t \leq [Q]_t, ~~ \forall 0 \leq t \leq 1.
\end{equation}
By substituting $T = T_2(t)$, so that $d T = T_2'(t) dt = d [Q]_t$, the left hand side of equation (\ref{eq2}) becomes
$$
\EE \left [ \int_0^\rho Q_t^{q-2} d [Q]_t \right ] =  \EE \left [ \int_0^{[Q]_\rho} B(T)^{q-2} d T \right ]
$$
and by substituting $T = T_1(t)$, the right hand side becomes
$$
\EE \left [ \int_0^\rho S_t^{q-2} d [S]_t \right ] =  \EE \left [ \int_0^{[S]_\rho}  B(T)^{q-2} d T \right].
$$
Equation (\ref{eq2}) shows us that our goal is to prove that
\begin{equation} \label{last}
\EE \left [ \int_0^{[Q]_\rho} B(T)^{q-2} d T \right ] \geq \EE \left [ \int_0^{[S]_\rho} B(T)^{q-2} d T \right].
\end{equation}
Since both integrands are positive and in light of (\ref{eqleq}), the proof of the inequality complete. \\

Let us now turn to analyse the equality case. We first remark that, by equation \eqref{eqTf}, we have almost surely $B(T) > 0$ for all $T \leq \max \{[S]_\rho, [Q]_\rho\}$. It follows that there could only be an equality in formula (\ref{last}) if $[S]_\rho = [Q]_\rho$, which implies that for almost all $0 \leq t \leq \rho$, there is an equality in equation (\ref{eqineq}). But according to Claim \ref{claim1}, the only case in which there can be equality in equation (\ref{eqineq}) is if $\gamma(A_t \Delta H(A_t)) = 0$ which in turn implies that $\gamma(A \Delta H(A)) = 0$. The equality case is thus also established. \\

\begin{remark}
Note that by equations \eqref{eqTf} and \eqref{eqcouple}, we have that $[S]_1 = [Q]_1$ almost surely. This could cause a confusion considering the fact that if $\gamma(A \Delta H(A)) > 0$ then almost surely, $[S]_t < [Q]_t$ for all $0 < t < 1$ (which we have just established) and in light of equation \eqref{eqineq}. This confusion is however settled by the fact that $t \to \omega_1(T_1(t))$ is not continuous at $t=1$.
\end{remark}
\medskip
It remains to prove Claim \ref{claim1}. It will be useful to first prove the following more general fact:

\begin{lemma} \label{claim1alt}
Let $m:\RR \to [0,1]$ be a measurable function. One has,
$$
\left | \int_{\RR} x m(x) d \gamma^1(x) \right | \leq q \left ( \int_{\RR} m(x) d \gamma^1(x) \right )
$$
and there is an equality in the above if an only if there exists $\alpha \in \RR$ such that $m(x)$ is of either the form $\mathbf{1}_{x \leq \alpha}$ or of the form $\mathbf{1}_{x \geq \alpha}$ for almost every $x \in \RR$.
\end{lemma}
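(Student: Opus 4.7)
\medskip

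The plan is to view this as a one-dimensional bathtub/rearrangement inequality: among all functions $m:\RR\to[0,1]$ with a prescribed integral $\int m \, d\gamma^1 = s$, the functional $m\mapsto \int x\, m(x)\, d\gamma^1(x)$ is maximized precisely when $m$ is pushed to the right as far as possible, i.e.\ $m=\mathbf{1}_{[\beta,\infty)}$ with $\beta=\Phi^{-1}(1-s)$, and minimized when $m=\mathbf{1}_{(-\infty,\alpha]}$ with $\alpha=\Phi^{-1}(s)$. By symmetry of $\gamma^1$ both extreme values equal $\pm q(s)$ (recall that $q(s)=\varphi(\Phi^{-1}(s))$ where $\varphi$ is the Gaussian density), which is exactly the bound we want.

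I would first reduce to the case $\int x\,m(x)\,d\gamma^1(x)\ge 0$ by replacing $m(x)$ with $m(-x)$, which preserves $\int m\,d\gamma^1$ and flips the sign of $\int x\,m\,d\gamma^1$. Let $s:=\int m\,d\gamma^1$ and set $m^*(x):=\mathbf{1}_{[\beta,\infty)}(x)$ with $\beta:=\Phi^{-1}(1-s)$, so that $\int m^*\,d\gamma^1 = s$. A direct verification, using the identity $-\int_{-\infty}^{\Phi^{-1}(s)} x\,d\gamma^1(x)=\int_{\Phi^{-1}(1-s)}^{\infty} x\,d\gamma^1(x)$, gives $\int x\,m^*\,d\gamma^1=q(s)$.

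The core step is then the one-line estimate
\[
\int_{\RR} x\,(m^*-m)(x)\, d\gamma^1(x) \;=\; \int_{\RR} (x-\beta)\,(m^*-m)(x)\, d\gamma^1(x) \;\ge\; 0,
\]
where the first equality uses $\int (m^*-m)\,d\gamma^1=0$, and the non-negativity holds pointwise: for $x>\beta$ one has $m^*(x)-m(x)=1-m(x)\ge 0$ and $x-\beta>0$, while for $x<\beta$ one has $m^*(x)-m(x)=-m(x)\le 0$ and $x-\beta<0$. This yields $\int x\,m\,d\gamma^1 \le q(s)$, proving the inequality.

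For equality, the pointwise argument shows that $\int x\,m\,d\gamma^1 = q(s)$ forces $(x-\beta)(m^*(x)-m(x))=0$ for $\gamma^1$-a.e.\ $x$, i.e.\ $m(x)=\mathbf{1}_{[\beta,\infty)}(x)$ almost everywhere; in the complementary case where we reduced by $x\mapsto -x$, we get $m=\mathbf{1}_{(-\infty,\alpha]}$ a.e. I do not anticipate a serious obstacle here: the argument is essentially the bathtub principle tailored to the Gaussian measure, and the only subtlety is bookkeeping the two possible signs of $\int x\,m\,d\gamma^1$ and checking that the extremizer computation reproduces the definition of $q(s)$.
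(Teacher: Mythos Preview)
Your proposal is correct and essentially identical to the paper's proof: both reduce to the nonnegative case via $m(x)\mapsto m(-x)$, introduce the half-line indicator $h=\mathbf{1}_{x\ge\alpha}$ with matching mass, and obtain the inequality and equality case from the pointwise sign observation $\int x(h-m)\,d\gamma^1=\int (x-\alpha)(h-m)\,d\gamma^1\ge 0$.
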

\begin{proof}
Without loss of generality, we may assume that $\int_{\RR} x m(x) d \gamma^1(x) \geq 0$ (otherwise replace $m(x)$ by $m(-x)$). Let $h(x)$ be the function of the form $h(x) = \mathbf{1}_{x \geq \alpha}$ where $\alpha$ is chosen such that $\int h(x) d \gamma^1(x) = \int m(x) d \gamma^1(x)$. The claim of the lemma boils down to showing that
\begin{equation}
\int_{\RR} x (h(x) - m(x)) d \gamma^1(x) \geq 0
\end{equation}
with equality if and only if $m(x) = h(x)$ almost surely. Indeed, we have
$$
\int_{\RR} x (h(x) - m(x)) d \gamma^1(x) = \int_{\RR} (x - \alpha) (h(x) - m(x)) d \gamma^1(x)
$$
now, by definition of $h(x)$ and by the fact that $0 \leq m(x) \leq 1$ for all $x$, the function $h(x) - m(x)$ has the same sign as $(x-\alpha)$, which means that the right hand side of the above equation is non-negative, and is zero if and only if $h(x)=m(x)$ almost surely.
\end{proof}

\begin{proof}[\textbf{Proof of Claim \ref{claim1}}]
Let $B \subset \RR^n$. Define,
$$
\theta = \frac{\int_{B} x \gamma(x) dx}{|\int_{B} x \gamma(x) dx|}.
$$
(if the denominator is zero then there's nothing to prove). Let $\mu$ be the push-forward of the restriction of $\gamma$ to $B$ under the map $x \to \langle \theta, x \rangle$. Define $m(x) = \frac{d \mu}{d \gamma^1 }(x)$. Since $\gamma(x)$ is a product measure,
$$
q(B) = \left | \int_{B} \langle \theta, x \rangle \gamma(x) dx  \right | =
\left | \int_{\RR} x m(x) \gamma^1(x) dx \right |.
$$
An application of lemma (\ref{claim1alt}) finishes the proof.
\end{proof}

We conclude with a remark about a possible generalization of this approach.
\begin{remark}
Let $\phi:[0,\infty) \to \RR$ be a twice-differentiable strictly convex function. One may generalize the definition of the $q$-stability of a set $A \subset \RR^n$ by defining
$$
\St^\phi_\rho(A) = \EE \Bigl [ \phi \bigl (\PP( \sqrt{\rho} X + \sqrt{1-\rho} Y \in A ~ | X ) \bigr )  \Bigr ]
$$
where $X,Y$ are independent standard Gaussian random vectors. It is not hard to check that the same proof, with a slight modification, would still work if the expression $\St^q_\rho(A)$ is replaced by $\St^\phi_\rho(A)$. One particularly interesting special case would be $\phi(x) = x \log x$ which reproves the fact that among all measurable sets $A \subset \RR^n$ of a given Gaussian measure, the relative entropy of $P_\rho(A)$ with respect to the Gaussian measure is maximized for half-spaces (where $P_\rho$ is the Ornstein-Uhlenbeck operator defined in equation \eqref{defOU} below).
\end{remark}

\subsection{An alternative construction of the coupling}
As promised in Remark \ref{remarkcoupling}, for the reader's benefit, we sketch an alternative justification of the existence of a coupling satisfying equation \eqref{eqcouple} which may shed some light on this construction. The idea is, instead of constructing the process $Q_t$ via equation \eqref{defQt}, to construct it directly from the Brownian motion $B(T)$ in a way that $\tilde B(T) = B(T)$. 

Let $B(T)$ be a process such that $B(0) - S_0$ is a standard Brownian motion and let $T_f := \min \{T; B(T) \in \{0,1\} \}$. Consider the ordinary differential equation
$$
\tau_2(0) = 0, ~~ \tau_2'(T) = (1-\tau_2(T)) q(B(T))^{-2}.
$$
Note that the function $F(T,y) = (1-y) q(B(T))^{-2}$ is Lipschitz with respect to $y$ and continuous with respect to $T$ whenever $0 < T < T_f$. Thus, existence and uniqueness of a solution to this equation in the interval $0 \leq T < T_f$ follows from the Picard-Lindel\"{o}f theorem.

Remark that by definition one has $\tau_2'(t) > 0$ for all $0 < T < T_f$, thus the inverse function theorem allows us to define $T_2(t)$ as the inverse function of $\tau_2$, and conclude that
$$
T_2'(t) = (1-t)^{-1} q(B(T_2(t)))^2, ~~ \forall 0 < t < \tau_2(T_f).
$$
Finally, define
$$
Q_t = B(T_2(t)).
$$
Consider the filtration $\mathcal{F}'_t := \sigma(T_2(t), \{ B(u) \}_{0 < u < T_2(t)} )$. We claim that $Q_{t \wedge \tau_2(T_f)}$ is a martingale with respect this filtration. Indeed, let $s,t$ be two $\FF'_t$-stopping times such that $0 \leq t \leq s \leq \tau_2(T_f)$ almost surely. Observe that $T_2(s) = \min \{T; \tau_2(T) = s \}$ is a stopping time with respect to the filtration of $B(T)$. By the optional stopping theorem, we have
$$
\EE[Q_s | \mathcal{F}'_t] = \EE[B( T_2(s) ) | \mathcal{F}'_t  ] 
$$
$$
= \EE[B( T_2(s) ) | \{B_u\}_{0 \leq u \leq T_2(t)} ] = B(T_2(t)) = Q_t.
$$
This shows that $Q_t$ is a local-martingale, and since $0 \leq Q_t \leq 1$ for all $0\leq t \leq \tau_2(T_f)$ we deduce that $Q_{t\wedge \tau_2(T_f)}$ is a martingale. Moreover, we have $[Q]_t = T_2(t)$ and so
$$
d [Q]_t = T_2(t)' dt = (1-t)^{-1} q(Q_t)^2 dt.
$$
By the martingale representation theorem, we conclude that $Q_t$ must satisfy the equation
$$
d Q_t = (1-t)^{-1/2} q(Q_t) d \bar B(t), ~~ Q_0 = S_0
$$
where $\bar B(t)$ is a Brownian motion. By the uniqueness of the solution of this stochastic differential equation, we conclude that the process $Q_t$ has the same law as the one of the process defined by equation \eqref{defQt} and one must also have $\lim_{T \to T_f^-} \tau_2(T) = 1$.

\section{The robustness estimate}

The goal of this section is to prove Theorem \ref{thmrobust} and Corollary \ref{cortv}. \\ 

Let us briefly describe the main steps of our proof. Our starting point is equations (\ref{needtoshow}) and (\ref{needtoshow2}),
according to which we have
$$
\St^q_\rho(H(A)) - \St^q_\rho(A) = \EE[Q_\rho^q] - \EE[S_\rho^q] =
$$
$$
\frac{1}{2} q(q-1) \left (\EE \left [ \int_0^\rho Q_t^{q-2} d [Q]_t \right ] - \EE \left [ \int_0^\rho S_t^{q-2} d [S]_t \right ] \right ).
$$
As described above, we couple the processes $S_t$ and $Q_t$ using the equations
$$
S_t = B([S]_t), ~~Q_t = B([Q]_t)
$$
where $B(T) - S_o$ is a standard Brownian motion and $T_f = [S]_1 = [Q]_1$. The above equation becomes
\begin{equation} \label{mainthing}
\St^q_\rho(H(A)) - \St^q_\rho(A) = \frac{1}{2} q(q-1) \EE \left [ \int_{0}^{[Q]_\rho} B(T)^{q-2} d T - \int_{0}^{[S]_\rho} B(T)^{q-2} d T \right ] =
\end{equation}
$$
\frac{1}{2} q(q-1) \EE \left [ \int_{[S]_\rho}^{[Q]_\rho} B(T)^{q-2} d T \right ].
$$
As in the previous section, we define
$$
T_1(t) = [S]_t, ~~ T_2(t) = [Q]_t
$$
and denote by $\tau_1, \tau_2$ their corresponding inverse functions. We also define
$$
\omega_1(T) = - \log(1 - \tau_1(T)), ~~\omega_2(T) = - \log(1 - \tau_2(T)).
$$
We have, as in (\ref{deromega}),
\begin{equation} \label{2deromega}
\omega_1(T)' = q(A_{\tau_1(T)})^{-2}, ~~ \omega_2(T)' = q(B(T))^{-2}.
\end{equation}
Also, by Claim \ref{claim1},
\begin{equation} \label{2eqineq}
\omega_1'(T) - \omega_2'(T) = q(A_{\tau_1(T)})^{-2} - q(B(T))^{-2} \geq 0 , ~~ \forall 0 \leq T < T_f.
\end{equation}
Finally, it will also be convenient to define the stopping times
$$
\Theta_1 = \min \{T ; ~ \omega_1(T) = -  \log(1 - \rho)   \} = T_1(\rho), ~~\Theta_2 = \min \{T ; ~ \omega_2(T) = - \log(1 - \rho)   \} = T_2(\rho).
$$
So equation (\ref{mainthing}) becomes
\begin{equation} \label{mainthing2}
\St^q_\rho(H(A)) - \St^q_\rho(A) = \frac{1}{2} q(q-1) \EE \left [ \int_{\Theta_1}^{\Theta_2} B(T)^{q-2} d T \right ].
\end{equation}

Our goal is to show that the quantity on the right hand side is not too small. For that, we would like to show two things: (i) That the expectation of $\Theta_2 - \Theta_1$ is quite large and (ii) that $B(T)$ is not too close to zero when we reach $\Theta_1$ and thus the integrand will be non-negligible in the (rather large) interval $[\Theta_1, \Theta_2]$.

We will first roughly show that up to time $\Theta_2$, the process $B(T)$ is bounded away from zero and from one with a probability that only depends on $B(0)$ and on $\rho$ (this is done partly in Lemma \ref{LemStoping} and partly in Lemma \ref{lemrho} below). This ensures that the integrand in the above formula is not too small, and hence it will be enough to prove (i).

The main step in the proof of (i) will be to show that
\begin{equation} \label{mainstep}
\PP( \omega_1(T_0) - \omega_2(T_0) \geq \delta) \geq p
\end{equation}
with $\delta$ and $p$ being as large as possible and $T_0 \leq \Theta_1$. This roughly means that the process $Q_t$ is "lagged" with respect to the process $S_t$ (when considering the above coupling) so that in the future, when the process $S_t$ stops (i.e., when $T=\Theta_1$), the process $Q_t$ will still have some time left (until $T=\Theta_2$) in order to accumulate a non-negligible quantity of quadratic variation. In other words, in order to use this fact to control the difference $\Theta_2 - \Theta_1$ from below, we use the fact that $\int_{\Theta_1}^{\Theta_2} \omega_2'(T) d T \geq \delta$, and invoke (\ref{2deromega}) in order to get an upper bound for the expression $\omega_2'(T)$. This is done in Lemma \ref{lemeps} below. \\

In order to prove an equation of the form (\ref{mainstep}), we will define
\begin{equation} \label{defeps}
\epsilon_t = q^2(S_t) - q^2(A_t).
\end{equation}
Note that $\eps_0 = \eps(A)$. We will use formula (\ref{2deromega}), which tells us that
\begin{equation} \label{eqqq}
\omega_1'(T) - \omega_2'(T) = q^{-2} (A_{\tau_1(T)}) - q^{-2 }(S_{\tau_1(T)}) \geq c \epsilon_{\tau_1(T)}
\end{equation}
hence difference $\omega_1'(T) - \omega_2'(T)$ is controlled by the quantity $\epsilon_{\tau_1(T)}$. Thanks to this, in order to prove that the difference $\omega_1(\Theta_1) - \omega_2(\Theta_1)$ is quite large, it will be enough to prove that with a non-negligible probability, one has
\begin{equation} \label{estrip}
\eps_t > c \eps_0, ~~ \forall 0 < t < \alpha
\end{equation}
where $c, \alpha$ are not too small. If this is true, we can integrate equation (\ref{eqqq}) and deduce that for all $t \geq \alpha$ one has $\omega_1(\alpha) - \omega_2(\alpha) \geq c \eps_0 \alpha$. Once we have this, we can finally ensure that $\omega_1(\Theta_1) - \omega_2(\Theta_1) = \delta$ where $\delta$ is not too small, with a non-negligible probability. This is eventually done in Lemma \ref{lembeta} below.

The only fact we will still have to explain is why an estimate of the form (\ref{estrip}) holds (which will be proven in Lemma \ref{epsstrip} below). This is the most involved step of the proof and the two consequent subsections are dedicated to it. The idea of its proof is to calculate the It\^{o} differential of the process $\epsilon_t$ (which turns out to be an It\^{o} process) using formula (\ref{stochastic}) and then bound it in terms of $S_t$ and $\epsilon_t$ itself. An entire subsection is dedicated to the calculation of this differential, and another subsection is dedicated to bounding its drift and quadratic variation, which boils down to bounding the Hilbert-Schmidt norm of a certain matrix. The outcome of these two subsections is concluded in Proposition \ref{mainsec4} below. Finally, the upper bound for the deficit is proven in subsection \ref{subupper}. \\ \\

We are finally ready to begin the proof. We start by defining a stopping time,
$$
\TTT = \min \{T ; ~ B(T)(1-B(T)) \leq B(0)(1-B(0)) / 2 \}.
$$
The following simple lemma shows that we can expect the quantity $S_t(1-S_t)$ to remain bounded away from zero with a non-negligible probability.
\begin{lemma} \label{LemStoping}
There exists a universal constant $c_1>0$ such that
\begin{equation} \label{eqLemStopping}
\PP \left (\tau_1(\TTT) \geq  \frac{1}{2} \right  ) \geq c_1 S_0 (1-S_0).
\end{equation}
\end{lemma}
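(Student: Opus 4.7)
The event $\{\tau_1(\TTT)\ge 1/2\}$ can be reformulated directly as an exit-time problem for the martingale $S_t$: since $\tau_1$ inverts $t\mapsto[S]_t$ and $S_t=B([S]_t)$, this event coincides with $\{\TTT\ge[S]_{1/2}\}$, which is the event that $S_t(1-S_t)>S_0(1-S_0)/2$ for every $t\in[0,1/2]$. Write $G:=\{x\in(0,1):x(1-x)>S_0(1-S_0)/2\}$ and let $\tau_G$ denote the first exit time of $S_t$ from $G$. Solving the quadratic $x(1-x)=S_0(1-S_0)/2$ shows that $G=(a,b)$ is an interval symmetric about $1/2$ of length $L:=b-a\in[1/\sqrt{2},1]$, and the distance $d:=\min(S_0-a,b-S_0)$ from $S_0$ to the nearer endpoint satisfies $d\ge S_0(1-S_0)/2$. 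Thus it suffices to prove $\PP(\tau_G>1/2)\ge c_1 S_0(1-S_0)$.

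The plan is to combine a \emph{deterministic} upper bound on $[S]_{1/2}$ with the first Dirichlet eigenfunction on $G$. For the deterministic bound, observe that every measurable $B\subset\RR^n$ satisfies the universal estimate $q(B)\le\EE[X_1^+]=1/\sqrt{2\pi}$, obtained by projecting onto $v(B)/|v(B)|$ and keeping only the positive part of the integrand. Plugging into \eqref{quadvar} yields
$$
[S]_{1/2}\le\int_0^{1/2}\frac{1}{2\pi(1-t)}\,dt=\frac{\log 2}{2\pi}=:T_0
$$
almost surely. Now set $\phi(x):=\sin(\pi(x-a)/L)$ and $\lambda:=\pi^2/(2L^2)$, so that $\tfrac12\phi''+\lambda\phi\equiv 0$ on $G$. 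Applying It\^{o}'s formula to $\phi(S_t)\exp(\lambda[S]_t)$, with $dS_t$ given by \eqref{dSt} and $d[S]_t$ by \eqref{quadvar}, the finite-variation terms cancel exactly by the eigenfunction identity. Hence the stopped bounded process $N_t:=\phi(S_{t\wedge\tau_G})\exp(\lambda[S]_{t\wedge\tau_G})$ is a martingale.

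Since $\phi$ vanishes on $\partial G$, $N_{1/2}=0$ on $\{\tau_G\le 1/2\}$ and $N_{1/2}\le e^{\lambda T_0}$ on $\{\tau_G>1/2\}$, so
$$
\phi(S_0)=\EE[N_{1/2}]\le e^{\lambda T_0}\,\PP(\tau_G>1/2).
$$
From $L^2\ge 1/2$ one gets $\lambda T_0\le\pi\log 2/2$, whence $e^{-\lambda T_0}\ge 2^{-\pi/2}$. A small check shows $\phi(S_0)=\sin(\pi d/L)$ regardless of whether $S_0\le 1/2$ or $S_0\ge 1/2$ (via $\sin(\pi(S_0-a)/L)=\sin(\pi(b-S_0)/L)$); combining $d/L\le 1/2$, $\sin\theta\ge(2/\pi)\theta$ on $[0,\pi/2]$, $d\ge S_0(1-S_0)/2$, and $L\le 1$ yields $\phi(S_0)\ge S_0(1-S_0)$. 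Assembling the estimates gives $\PP(\tau_G>1/2)\ge 2^{-\pi/2}S_0(1-S_0)$, proving the lemma with $c_1=2^{-\pi/2}$. The only delicate point is the deterministic upper bound on $[S]_{1/2}$; without it one would have to apply the eigenfunction martingale at a random stopping time depending on $[S]$, losing the clean exponential-factor control.
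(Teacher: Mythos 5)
Your proof is correct, and it takes a genuinely different route from the paper's, although both arguments share the same key deterministic ingredient: the a.s.\ bound $[S]_{1/2}\le\frac{\log 2}{2\pi}$ obtained from \eqref{quadvar} together with the trivial estimate $q(\cdot)\le q(1/2)=1/\sqrt{2\pi}$ (the paper derives it via \eqref{Tbound}, i.e.\ from $[S]_t\le[Q]_t$; you derive it directly from $q(A_t)\le 1/\sqrt{2\pi}$, which is marginally more elementary since it does not invoke the coupling).

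Where the two proofs diverge is in how this Brownian-time budget is converted into a lower bound on the exit probability. The paper splits the argument into two probabilistic stages: an optional-stopping computation showing that $B$ hits $1/2$ before exiting the target interval with probability at least $B(0)(1-B(0))$, followed by the elementary fact that a Brownian motion started at $1/2$ remains in $[1/4,3/4]$ for a further time $\frac{\log 2}{2\pi}$ with probability bounded below by a universal constant. You instead perform a single optional-stopping step with the first Dirichlet eigenfunction $\phi$ on $G=(a,b)$: the exponential martingale $\phi(S_{t\wedge\tau_G})\exp(\lambda[S]_{t\wedge\tau_G})$ directly couples the spatial decay of $\phi$ near $\partial G$ with the random clock $[S]_t$, and the deterministic bound on $[S]_{1/2}$ controls the exponential factor. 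This yields the cleaner estimate $\PP(\tau_G>1/2)\ge e^{-\lambda T_0}\phi(S_0)$, and the elementary reductions $L\in[1/\sqrt{2},1]$, $d\ge S_0(1-S_0)/2$, $\sin\theta\ge(2/\pi)\theta$ then give the explicit constant $c_1=2^{-\pi/2}$. In short, the paper's argument is a two-stage ``reach-the-middle-then-linger'' estimate, while yours is a one-shot spectral/supermartingale bound; yours buys an explicit constant and a tighter single use of optional stopping, while the paper's is more elementary and requires no eigenfunction machinery. One small imprecision worth noting: the event $\{\tau_1(\TTT)\ge 1/2\}$ equals $\{\tau_G\ge 1/2\}$ (the condition $S_t(1-S_t)>S_0(1-S_0)/2$ need only hold for $t<1/2$), not $\{\tau_G>1/2\}$; since you bound $\PP(\tau_G>1/2)$ from below, this only strengthens what you prove, so the argument is unaffected.
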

\begin{proof}
First, it will be useful to notice that, according to (\ref{quadvarQ}),
$$
T_2'(t) = (1-t)^{-1} q(Q_t)^2 \leq (1-t)^{-1} q(1/2)^2 = \frac{1}{2 \pi} (1-t)^{-1}.
$$
Therefore, using (\ref{eqleq}),
\begin{equation} \label{Tbound}
T_1(t) \leq T_2(t) \leq - \frac{1}{2 \pi} \log(1 - t).
\end{equation}
Define
$$
U = \min \{T; ~ B(T)(1-B(T)) \leq B(0)(1-B(0)) / 2 \mbox { or } B(T) = 1/2  \}.
$$
Assume for now that $B(0) \leq 1/2$. Let $\beta$ be the solution to the equation
$$
\beta(1 - \beta) = B(0)(1 - B(0)) / 2
$$
satisfying $\beta < 1/2$. It is easy to verify that
$$
\beta \leq B(0) / 2.
$$
Since $B(T)$ is a martingale, the optional stopping theorem implies that
$$
\PP( B(U) = 1/2 ) = \frac{B(0) - \beta}{1/2 - \beta} \geq B(0).
$$
In a completely similar manner, when $B(0) > 1/2$ one has $\PP( B(U) = 1/2 ) \geq 1-B(0)$, and we conclude that
\begin{equation} \label{pbu}
\PP( B(U) = 1/2 ) \geq B(0)(1 - B(0)).
\end{equation}
Equation (\ref{Tbound}) teaches us that $T_1(1/2) \leq \frac{1}{2 \pi} \log 2$. Clearly, there exists a constant $c>0$ such that a Brownian motion starting at $1/2$ at time $T_0$ remains inside the interval $[1/4,3/4]$ by time $ T_0 + \frac{1}{2 \pi} \log 2$ with probability at least $c$. So, by remarking that
$$
B(T)(1-B(T)) \leq B(0)(1-B(0)) / 2 \Rightarrow B(T)(1-B(T)) \leq 1/8 \Rightarrow B(T) \notin [1/4,3/4],
$$
we learn that
$$
\left . \PP \left ( B(T)(1-B(T) > B(0)(1-B(0)) / 2, ~~ \forall U \leq T \leq U + \frac{1}{2 \pi} \log 2~ \right  | B(U) = 1/2 \right ) > c.
$$
Combining this fact with (\ref{pbu}) gives
$$
\PP(\TTT \geq T_1(1/2)) \geq c B(0) (1 - B(0)) = c S_0 (1 - S_0).
$$
The proof is complete.
\end{proof}

For a number $\delta > 0$, define the event
$$
F_\delta = \left  \{ |\epsilon_t - \epsilon_0| \leq \epsilon_0 / 2, ~~ \forall 0 \leq t \leq \delta |\log \epsilon_0|^{-1} S_0^7(1-S_0)^7 \right  \}
$$
where $\epsilon_t$ is defined in equation (\ref{defeps}).

\begin{lemma} \label{epsstrip}
There exists a universal constant $c_2>0$ such that whenever $\epsilon_0 < 1/2$,
\begin{equation} \label{eqlemeps}
\PP \left (\tau_1(\TTT) \geq  \frac{1}{2} \mbox { and } F_{c_2} \mbox{ holds} \right ) \geq c_2 S_0 (1 - S_0).
\end{equation}
\end{lemma}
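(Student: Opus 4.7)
The plan is to combine Lemma \ref{LemStoping}, which gives the event $E := \{\tau_1(\TTT) \geq 1/2\}$ of probability at least $c_1 S_0(1-S_0)$, with a Doob-type maximal inequality controlling the oscillation of $\epsilon_t$. Setting $\sigma := \min\{t : S_t(1-S_t) \leq S_0(1-S_0)/2\} \wedge \tfrac12$, one has $E = \{\sigma = \tfrac12\}$, so it suffices to show that for a sufficiently small universal $c_2>0$ the event
$$
G := \left\{ \sup_{0 \leq t \leq T^* \wedge \sigma} |\epsilon_t - \epsilon_0| > \epsilon_0/2 \right\}, \qquad T^* := c_2 |\log \epsilon_0|^{-1} S_0^7 (1-S_0)^7,
$$
has probability at most $\tfrac12 c_1 S_0(1-S_0)$; since $T^* \leq 1/2$ for $c_2$ small, the event in (\ref{eqlemeps}) then contains $E \setminus G$.

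The core step is to express $\epsilon_t = q^2(S_t) - q^2(A_t)$ as an It\^o process $d \epsilon_t = \mu_t \, dt + \langle \nu_t, dW_t \rangle$ and to bound $\mu_t$ and $|\nu_t|$ on $\{t \leq \sigma\}$. The term $q^2(S_t)$ can be differentiated directly from (\ref{dSt})--(\ref{quadvar}). For $q^2(A_t) = \bigl|\int_{A_t} x \, d \gamma(x)\bigr|^2$, I would apply (\ref{stochasticfub}) coordinatewise to $\phi(x) = x \mathbf{1}_A(x)$, with the appropriate $(1-t)$-rescaling, to obtain the It\^o decomposition of the $\RR^n$-valued martingale $\int_{A_t} x \, d \gamma(x)$, and then apply It\^o's formula to its squared norm; this calculation is precisely what the paper announces will be carried out in the next subsection. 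On $\{t \leq \sigma\}$ one has $S_t(1-S_t) \geq S_0(1-S_0)/2$, providing quantitative non-degeneracy. I expect the drift/diffusion bounds announced as Proposition \ref{mainsec4} to take the form
$$
|\mu_t| + |\nu_t|^2 \leq P\bigl(S_0^{-1}(1-S_0)^{-1}\bigr) \cdot |\log \epsilon_0|,
$$
for some fixed polynomial $P$, the $|\log \epsilon_0|$ factor arising when one bounds the Hilbert--Schmidt norm of the relevant second-moment matrix associated with $A_t$, the worst case being sets $A_t$ concentrated where $\gamma$ is exponentially small.

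Granted such bounds, Doob's $L^2$ maximal inequality for the martingale part of $\epsilon_{t \wedge \sigma}$, combined with an elementary $L^1$ bound on the drift over $[0, T^*]$, gives
$$
\EE \bigl[ \sup_{0 \leq t \leq T^* \wedge \sigma}(\epsilon_t - \epsilon_0)^2 \bigr] \leq C \cdot P\bigl(S_0^{-1}(1-S_0)^{-1}\bigr) \cdot |\log \epsilon_0| \cdot T^*,
$$
and the choice of $T^*$ together with Chebyshev's inequality delivers $\PP(G) \leq \tfrac12 c_1 S_0(1-S_0)$ once $c_2$ is sufficiently small; the exponent $7$ is simply the worst-case polynomial degree arising at this step and is not claimed to be sharp. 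The main obstacle is therefore the Hilbert--Schmidt estimate mentioned in the previous paragraph: controlling how the second-moment matrix of $A_t$ fluctuates in $t$, and how the smallness of $\epsilon_0$ restricts it, is the genuine technical work that the following two subsections carry out and that culminates in Proposition \ref{mainsec4}.
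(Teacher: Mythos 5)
Your overall strategy matches the paper's: combine Lemma \ref{LemStoping} with a deviation bound for the process $\epsilon_t$ over the short window $[0,T^*]$, stopping at $\sigma$ to keep $S_t(1-S_t)$ quantitatively away from $0$, and conclude by a union bound. The paper substitutes for your Doob maximal inequality the slightly more pedestrian device of stopping $\epsilon_t$ at the first time it deviates by $\epsilon_0/2$ (the stopping time $u$) and applying one-shot Chebyshev at $t_0 = T^* \wedge \tau_1(\TTT) \wedge u \wedge 1/2$; this has the same effect, and in fact a stopping of this sort is needed in your argument too, since the bounds on the drift and diffusion of $\epsilon_t$ are only controlled by $\epsilon_0$ as long as $\epsilon_t$ has not yet left a neighbourhood of $\epsilon_0$. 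That localization is a minor point; you probably had it in mind.

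The genuine gap is in your guessed form of Proposition \ref{mainsec4}. You write
$$
|\mu_t| + |\nu_t|^2 \leq P\bigl(S_0^{-1}(1-S_0)^{-1}\bigr)\cdot|\log\epsilon_0|,
$$
with no dependence on $\epsilon_t$. With this bound, your own intermediate estimate
$$
\EE\Bigl[\sup_{0 \leq t \leq T^*\wedge\sigma}(\epsilon_t - \epsilon_0)^2\Bigr] \leq C\,P\bigl(S_0^{-1}(1-S_0)^{-1}\bigr)\,|\log\epsilon_0|\,T^*
$$
has a right-hand side independent of $\epsilon_0$ (after inserting $T^* = c_2|\log\epsilon_0|^{-1}S_0^7(1-S_0)^7$), while Chebyshev divides by $(\epsilon_0/2)^2$. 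The resulting bound for $\PP(G)$ behaves like $\epsilon_0^{-2}$, which blows up precisely in the regime $\epsilon_0\to 0$ that the lemma is about. The actual bounds in Proposition \ref{mainsec4} have an extra factor of $\epsilon_t$ in both $|\alpha_t|$ and $|\beta_t|$ (so an extra $\epsilon_t^2$ in $|\alpha_t|^2$), roughly $\epsilon_0$ on the relevant time interval. This is not a cosmetic improvement: it is the whole point of the lemma that small $\epsilon_t$ forces the matrices $B_t$ and $\tilde B_t$ to be nearly equal and hence forces $\epsilon_t$ itself to have small It\^{o} differential, a self-reinforcing smallness. With the $\epsilon_t$ factor restored, the $\epsilon_0^2$ from the bound cancels the $\epsilon_0^{-2}$ from Chebyshev, and your choice of $T^*$ then produces exactly the $O(S_0(1-S_0))$ probability bound you need.
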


The main ingredient of this lemma will be the following proposition, to the proof of which we dedicate the next two subsections. The point of the proposition is that $\epsilon_t$ does not move too much provided that it is small and that $S_t$ is bounded away from $0$ and $1$.

\begin{proposition} \label{mainsec4}
There exists a universal constant $C>0$ such that the following holds: There exist two predictable processes $\alpha_t \in \RR^n$ and $\beta_t \in \RR$ satisfying
$$
d \epsilon_t = \langle \alpha_t, d W_t \rangle + \beta_t dt
$$
and such that the following bounds hold,\\
(i) For all $0 \leq t \leq 1$,
$$
|\alpha_t| \leq (1-t)^{-1/2} \frac{C}{S_t^3(1 - S_t)^3} \epsilon_t \sqrt{|\log \epsilon_t|}.
$$
(ii) For all $0 \leq t \leq 1$,
$$
|\beta_t| \leq (1-t)^{-1} \frac{C}{S_t^3(1-S_t)^3} \epsilon_t \sqrt{|\log \epsilon_t|}.
$$
\end{proposition}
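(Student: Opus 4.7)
The strategy is to compute the It\^o decomposition of $\epsilon_t = q^2(S_t) - q^2(A_t)$ explicitly and then bound the resulting drift $\beta_t$ and diffusion $\alpha_t$ by exploiting the geometric fact — the one behind Claim~\ref{claim1} — that both must vanish identically whenever $A_t$ is a half-space of measure $S_t$. Since $\epsilon_t$ itself quantifies the failure of $A_t$ to be such a half-space, one expects $\alpha_t$ and $\beta_t$ to be controlled by $\epsilon_t$, up to a logarithmic and measure-dependent loss.

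The decomposition proceeds in three substeps. First, apply Lemma~\ref{LemdiffF} componentwise with $\phi(x)=x_j$ and use the It\^o product rule on $\sqrt{1-t}\,v(A_t) = \int_A (x-W_t)F_t(x)\,dx$; after cancellation of the $W_t\,dS_t$ contribution one obtains
$$ dv(A_t) = -\tfrac{1}{2}(1-t)^{-1} v(A_t)\,dt + (1-t)^{-1/2}\, C(A_t)\,dW_t, $$
where $C(A_t) := \int_{A_t}(y\otimes y - I)\,d\gamma(y)$. Second, It\^o's formula applied to $|v(A_t)|^2 = q^2(A_t)$ produces $dq^2(A_t)$ with drift $(1-t)^{-1}[\|C(A_t)\|_{HS}^2 - q^2(A_t)]$ and an explicit diffusion. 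Third, a direct computation shows that the one-dimensional function $q(s)$ satisfies $q'(s) = -\Psi(s)$ and $q'(s)^2 + q(s)q''(s) = \Psi(s)^2 - 1$, so It\^o applied to $q^2(S_t)$ in conjunction with \eqref{dSt} and \eqref{quadvar} yields $dq^2(S_t)$. Subtracting reads off the predictable coefficients
$$ \alpha_t = -2(1-t)^{-1/2}\bigl[q(S_t)\Psi(S_t)\, v(A_t) + C(A_t)^T v(A_t)\bigr], $$
$$ \beta_t = (1-t)^{-1}\bigl[\Psi(S_t)^2\, q^2(A_t) - \|C(A_t)\|_{HS}^2\bigr]. $$

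A short verification then shows that for a half-space $H\subset\RR^n$ in direction $e$ with $\gamma(H) = S_t$ one has $v(H) = q(S_t)\,e$ and $C(H) = -\Psi(S_t)q(S_t)\, e\otimes e$, and substituting these into the formulas above gives $\alpha_t=0$ and $\beta_t=0$ identically. Thus each of $\alpha_t$ and $\beta_t$ is a second-order deficit quantity measuring the distance of $A_t$ from being a half-space of the prescribed measure.

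I expect the quantitative comparison of these second-order deficits with the first-order deficit $\epsilon_t$ to be essentially the entirety of the remaining work. The natural first move is to project onto the unit vector $e_t := v(A_t)/|v(A_t)|$: the component of $\alpha_t$ along $e_t$, together with the diagonal contribution $(e_t^T C(A_t) e_t)^2$ to $\|C(A_t)\|_{HS}^2$, depends only on the one-dimensional marginal of $\gamma|_{A_t}$ in the direction $e_t$, and Lemma~\ref{claim1alt} provides $L^1$-control of this marginal against the indicator of the corresponding half-line in terms of $\epsilon_t$. Upgrading this to a control of the integral against the $\Psi(S_t)$-shifted second Hermite weight by an $L^1$-times-$L^\infty$ split is what produces the $\sqrt{|\log\epsilon_t|}$ factor, the logarithm arising from Gaussian tail bounds. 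The off-diagonal part of $C(A_t)$ (directions perpendicular to $e_t$) must be handled separately by comparison against the best half-space in those directions, and this is where I expect Talagrand's theorem alluded to in the acknowledgments to play the decisive role. The prefactor $S_t^{-3}(1-S_t)^{-3}$ should drop out of these one-dimensional estimates by tracking the degeneracy of $q(s)$ as $s\to 0$ or $s\to 1$.
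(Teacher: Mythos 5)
Your derivation of the It\^o decomposition is correct and coincides exactly with the one the paper carries out (formula \eqref{finald}, after rewriting the paper's $\tilde B_t = \xi(S_t)u_t\otimes u_t$ via $\xi(S_t)=-q(S_t)\Psi(S_t)$, $U_t=v(A_t)$, $B_t = C(A_t)$): your $\alpha_t$ equals $2(1-t)^{-1/2}|U_t|(\tilde B_t - B_t)u_t$ and your $\beta_t$ equals $(1-t)^{-1}\bigl(\Vert\tilde B_t\Vert_{HS}^2|U_t|^2/q(S_t)^2 - \Vert B_t\Vert_{HS}^2\bigr)$. The observation that both vanish on half-spaces, and the subsequent block-decomposition of $B_t$ relative to $E=\mathrm{sp}\{u_t\}$, is also the paper's strategy.

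The gap is in the bounding outline, specifically in how you allocate the off-diagonal blocks of $B_t$. You attribute everything outside the $E$-$E$ entry to a Talagrand transport argument. Talagrand's inequality, applied to the marginal on $E^\perp$, controls $\Vert P_{E^\perp} B_t P_{E^\perp}\Vert_{HS}$ (paper's Lemma~\ref{estup}): the transport map $T$ pushing $\tilde\gamma$ to the normalized $E^\perp$-marginal of $\gamma|_{A_t}$ yields $\mathrm{Cov}(T(X))-\mathrm{Cov}(X)$, which captures exactly that block. It does \emph{not} see the cross entries $\langle v, B_t u_t\rangle$ for unit $v\perp u_t$ — those mix an $E$-coordinate with an $E^\perp$-coordinate, while the transport is entirely within $E^\perp$. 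These cross terms enter both $\alpha_t$ (as $P_{E^\perp}B_tU_t$) and $\Vert B_t\Vert_{HS}^2$, and the prefactor $S_t^{-3}(1-S_t)^{-3}$ in the proposition is driven precisely by their bound. In the paper they need a separate two-dimensional argument (Lemma~\ref{estoffdiag}): fix $v\perp u_t$, work with the joint marginal $f(x,y)$ of $\gamma|_{A_t}$ on $\mathrm{sp}\{u_t,v\}$, set $Q(y)=\int x f(x,y)\,d\gamma^1(x)$ and $g(y)=\int f(x,y)\,d\gamma^1(x)$; the key facts are that $\int yg\,d\gamma^1 = 0$ (so $\langle v,B_tu_t\rangle = \int yQ\,d\gamma^1$ has no first-order part), that Fact~\ref{secondder} gives a Taylor lower bound $q(S_t)-\int q(g)\,d\gamma^1\geq\int(g-S_t)^2\,d\gamma^1$ to pin down $\int(g-S_t)^2$ by $\epsilon_t$, and then a Cauchy–Schwarz type argument converts this into control of $\int y(q(g)-Q)\,d\gamma^1$ and $\int yq(g)\,d\gamma^1$. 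You should add this third lemma to your plan; without it, (i) and (ii) do not close.

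Beyond that omission, the pieces you do describe line up: the 1D diagonal entry (Lemma~\ref{estu}) indeed gives the $\sqrt{|\log\epsilon_t|}$ via splitting the $(x-L)^2$ weight at a threshold $p\sim\sqrt{|\log\epsilon_t|}$ and using Gaussian tails, and the $E^\perp$ block is the unique place Talagrand is invoked. So the sketch is structurally the paper's proof modulo the missing cross-term lemma and the work of actually carrying out the one- and two-dimensional estimates with the stated $S_t(1-S_t)$ powers.
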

\bigskip
\begin{proof}[\textbf{Proof of Lemma \ref{epsstrip}}]
Define the stopping time
$$
u = \min \{t; ~ |\epsilon_t - \epsilon_0| \geq \epsilon_0 / 2 \} \wedge 1.
$$
By the notation of Proposition \ref{mainsec4} we have
$$
d \epsilon_t = \langle \alpha_t, d W_t \rangle + \beta_t dt.
$$
By definition for all $t \leq \tau_1(\TTT)$, one has
\begin{equation} \label{eq101}
\frac{1}{S_t(1-S_t)} \leq \frac{2}{S_0(1-S_0)}.
\end{equation}
Consequently, according to part (i) of the above proposition, we have
\begin{equation} \label{alphaleq}
|\alpha_t| \leq \frac{C_0} {S_0^3(1-S_0)^3} \epsilon_0 \sqrt{|\log \epsilon_0|}, ~~ \forall 0 \leq t \leq \tau_1(\TTT) \wedge u \wedge 1/2
\end{equation}
for a universal constant $C_0 > 0$ and according to part (ii) of the proposition,
\begin{equation} \label{betaleq}
|\beta_t| \leq \frac{C_1}{S_0^3(1-S_0)^3} \epsilon_0 \sqrt{|\log \epsilon_0|},  ~~ \forall 0 \leq t \leq \tau_1(\TTT) \wedge u \wedge 1/2
\end{equation}
for a universal constant $C_1 > 0$. Fix a constant $\delta > 0$ and define
$$
t_0 = \delta |\log \epsilon_0|^{-1} S_0^7(1-S_0)^7 \wedge \tau_1(\TTT) \wedge u \wedge \frac{1}{2}.
$$
Equation (\ref{betaleq}) and the fact that $\epsilon_0 < 1/2$ imply
\begin{equation} \label{bdbeta}
\left |\int_0^{t_0} \beta_t dt \right | \leq C_3 \delta \epsilon_0
\end{equation}
for some universal constant $C_3>0$. Using the triangle inequality gives
$$
\PP \left ( |\epsilon_{t_0} - \epsilon_0| \geq \epsilon_0 / 2 \right ) \leq
$$
$$
\PP \left ( \left |\int_0^{t_0} \langle \alpha_t, d W_t \rangle  \right | + \left |\int_0^{t_0} \beta_t dt \right | \geq \epsilon_0 / 2 \right ) \leq
$$
$$
\PP \left ( \left |\int_0^{t_0} \langle \alpha_t, d W_t \rangle  \right | \geq \epsilon_0 / 2 - C_3 \delta \epsilon_0 \right ).
$$
By assuming that $\delta$ is a small enough universal constant, we can assert that $\epsilon_0 / 2 - C_3 \delta \epsilon_0 \geq \epsilon_0 / 4$, and obtain
\begin{equation} \label{almost11}
\PP \left ( |\epsilon_{t_0} - \epsilon_0| \geq \epsilon_0 / 2 \right ) \leq \PP \left ( \left |\int_0^{t_0} \langle \alpha_t, d W_t \rangle  \right | \geq \epsilon_0 / 4 \right ).
\end{equation}
To estimate the right hand side, we use equation (\ref{alphaleq}) to get
\begin{equation}
[\epsilon]_{t_0} = \int_0^{t_0} |\alpha_t|^2 dt  \leq C_0 \delta S_0(1-S_0) \epsilon_0^2.
\end{equation}
By It\^{o}'s formula, we have
$$
\EE \left [ \left ( \int_0^{t_0} \langle \alpha_t, d W_t \rangle \right )^2 \right ] \leq C_0 S_0(1-S_0) \delta \epsilon_0^2
$$
and, by Chebyshev's inequality,
$$
\PP \left ( \left |\int_0^{t_0} \langle \alpha_t, d W_t \rangle  \right | > \epsilon_0 / 4 \right ) < 16 C_0 S_0(1-S_0) \delta.
$$
Combining this with (\ref{almost11}) finally gives
$$
\PP \left ( |\epsilon_{t_0} - \epsilon_0| \geq \epsilon_0 / 2 \right ) \leq 16 C_0 S_0(1-S_0) \delta.
$$
This shows that there exists a universal constant $c_2 > 0$ such that if $\delta \leq c_2$, then
$$
\PP \left ( |\epsilon_{t_0} - \epsilon_0| \geq \epsilon_0 / 2 \right ) < c_1 S_0 (1 - S_0) / 2
$$
where $c_1$ is the constant from equation (\ref{eqLemStopping}). In other words, by definition of $t_0$ and $u$ and by the continuity of $\epsilon_t$,
\begin{equation} \label{albound}
\PP(t_0 = u) \leq c_1 S_0 (1 - S_0) / 2.
\end{equation}
Define
$$
\alpha = c_2 |\log \epsilon_0|^{-1} S_0^7(1-S_0)^7.
$$
The assumption that $\epsilon_0 < 1/2$ can ensure (by taking $c_2$ to be small enough) that $\alpha \wedge 1/2 = \alpha$,
which also implies that
$$
t_0 < u \Rightarrow t_0 = \alpha \wedge \tau_1(\TTT).
$$
By definition, if $t_0 = \tau_1(\TTT)$ it means that $\tau_1(\TTT) \leq \frac{1}{2}$ so equation (\ref{albound}) becomes
$$
\PP( |\epsilon_{t} - \epsilon_0| \leq \epsilon_0 / 2, ~~ \forall 0 \leq t \leq \alpha \mbox { or } \tau_1(\TTT) < 1/2 ) > 1 - c_1 S_0 (1 - S_0) / 2.
$$
Using a union bound with the result of Lemma \ref{LemStoping} finishes the proof.
\end{proof}

From this point on, we denote $\alpha = c_2 |\log \epsilon_0|^{-1} S_0^7(1-S_0)^7$ where $c_2$ is the constant which appears in equation (\ref{eqlemeps}) and define
\begin{equation} \label{defg}
G = F_{c_2} \cap  \left \{ \tau_1(\TTT) \geq  \frac{1}{2} \right  \} =
\end{equation}
$$
\left  \{ |\epsilon_t - \epsilon_0| \leq \epsilon_0 / 2, ~~ \forall 0 \leq t \leq \alpha \right \} \cap \left  \{ \tau_1(\TTT) \geq  \frac{1}{2} \right  \}.
$$
According to the previous lemma, we have
$$
\PP(G) \geq c_2 S_0(1 - S_0).
$$
Next, we show:

\begin{lemma} \label{lembeta}
We have, almost surely,
\begin{equation}
G \mbox { holds } \Rightarrow \omega_1(T_1(\alpha)) - \omega_2(T_1(\alpha)) \geq \alpha \epsilon_0 /2.
\end{equation}
\end{lemma}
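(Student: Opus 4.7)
The plan is to express the target difference as a single integral of $\omega_1' - \omega_2'$ over $T$, change variables $T = T_1(t)$ so that $\epsilon_t$ appears explicitly in the integrand, and then apply the two pointwise bounds that the event $G$ supplies.

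Concretely, I would start from
\begin{equation*}
\omega_1(T_1(\alpha)) - \omega_2(T_1(\alpha)) = \int_0^{T_1(\alpha)} \bigl[\omega_1'(T) - \omega_2'(T)\bigr]\, dT,
\end{equation*}
and substitute $T = T_1(t)$, which gives $dT = T_1'(t)\, dt = (1-t)^{-1} q(A_t)^2\, dt$ by equation \eqref{eq1}. Using \eqref{2deromega} together with the coupling identity $B(T_1(t)) = S_t$, the transformed integrand becomes
\begin{equation*}
\bigl[q(A_t)^{-2} - q(S_t)^{-2}\bigr](1-t)^{-1} q(A_t)^2 \;=\; (1-t)^{-1}\,\frac{q(S_t)^2 - q(A_t)^2}{q(S_t)^2} \;=\; (1-t)^{-1}\,\frac{\epsilon_t}{q(S_t)^2},
\end{equation*}
where $q(S_t)$ is understood via the one-dimensional function $q:[0,1]\to\RR$.

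The two required estimates are then immediate. On the event $G$ defined in \eqref{defg}, we have $\epsilon_t \geq \epsilon_0/2$ for every $t \in [0,\alpha]$. Moreover, a direct computation from the definition $q(s) = -\int_{-\infty}^{\Phi^{-1}(s)} x\, d\gamma^1(x)$ yields $q(s) = \gamma^1(\Phi^{-1}(s)) \leq 1/\sqrt{2\pi}$ for every $s \in [0,1]$, so $1/q(S_t)^2 \geq 2\pi$. Combined with the trivial bound $(1-t)^{-1} \geq 1$ on $[0,\alpha]$, the integrand is bounded below by $\pi \epsilon_0$ throughout $[0,\alpha]$, and integrating gives
\begin{equation*}
\omega_1(T_1(\alpha)) - \omega_2(T_1(\alpha)) \geq \pi \alpha \epsilon_0 \geq \alpha \epsilon_0 / 2,
\end{equation*}
which is the desired inequality.

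There is no real obstacle in this proof: the content of the lemma is the algebraic identity that rewrites $\omega_1' - \omega_2'$ in terms of $\epsilon_t$, together with the pointwise lower bound on $\epsilon_t$ that was already extracted in Lemma \ref{epsstrip}. All remaining ingredients, namely the uniform upper bound on $q(s)$ and the monotonicity of $(1-t)^{-1}$, are elementary.
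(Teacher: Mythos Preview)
Your proof is correct and follows essentially the same route as the paper: both rewrite $\omega_1(T_1(\alpha)) - \omega_2(T_1(\alpha))$ via the chain rule with the substitution $T = T_1(t)$, arriving at the integrand $(1-t)^{-1}\bigl(1 - q(A_t)^2/q(S_t)^2\bigr) = (1-t)^{-1}\epsilon_t/q(S_t)^2$, and then use the uniform bound on $q(\cdot)$ together with $\epsilon_t \geq \epsilon_0/2$ on $G$. The only cosmetic difference is that you invoke the sharp value $q(s) \leq 1/\sqrt{2\pi}$ whereas the paper uses the cruder $q(s) < 1$, so you pass through the stronger intermediate bound $\pi\alpha\epsilon_0$ before concluding $\geq \alpha\epsilon_0/2$.
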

\begin{proof}
We start with recalling formula (\ref{2eqineq}). According to this formula, we have
\begin{equation} \label{deromega2}
(\omega_1 - \omega_2)'(T) =   q(A_{\tau_1(T)})^{-2} - q(S_{\tau_1(T)})^{-2}.
\end{equation}
Moreover, according to formula (\ref{quadvar})
$$
T_1'(t) = \frac{d}{dt} [S]_t = (1-t)^{-1} q(A_t)^2.
$$
By the chain rule, we get
\begin{equation}
\frac{d}{dt} (\omega_1 - \omega_2)(T_1(t)) = (1-t)^{-1} \left (q(A_t)^{-2} - q(S_t)^{-2} \right ) q(A_t)^2 =
\end{equation}
$$
(1-t)^{-1} \left ( 1 - \frac{q(A_t)^2}{q(S_t)^2} \right ).
$$
Next, we observe that the function $q(\cdot)$ is bounded from above by $q(1/2) < 1$. It follows that for all $0 < t < 1$,
$$
\epsilon_{t} = q(S_{t})^{2} - q(A_{t})^{2} =  q(S_{t})^{2} \left (1 - \frac{q(A_{t})^{2}}{q(S_{t})^{2}} \right ) \leq 1 - \frac{q(A_{t})^{2}}{q(S_{t})^{2}}.
$$
The two last equations yield
$$
\frac{d}{dt} (\omega_1(T_1(t)) - \omega_2(T_1(t))) \geq \epsilon_t, ~~ \forall 0 \leq t \leq 1.
$$
Under the assumption that $G$ holds, by integrating both sides, we get
$$
\omega_1(T_1(\alpha)) - \omega_2(T_1(\alpha)) \geq \int_0^\alpha \epsilon_t dt \geq \alpha \epsilon_0 / 2.
$$
The lemma is complete.
\end{proof}

The next lemma helps us take advantage of the deficit $\omega_1(T) - \omega_2(T)$ in order to give a lower bound for the right hand side of equation (\ref{mainthing2}).

\begin{lemma} \label{lemeps}
Let $0 < \EPS < 1$ and $0 \leq t_0 \leq \rho$. Consider the event
$$
H = \Bigl \{ \omega_1(T_1(t_0)) - \omega_2(T_1(t_0)) \geq \EPS \Bigr \}.
$$
Then almost surely, whenever $H$ holds, one has 
\begin{equation} \label{eqeps1}
\EE \left . \left [ \int_{\Theta_1}^{\Theta_2} B(T)^{q-2} d T \right | \mathcal{F}_{t_0} \right  ] \geq \frac{c \EPS}{2^q} \EE \left . \left [Q_{t_1}^{q+1} (1 - Q_{t_1})^{q+1} \right  | \mathcal{F}_{t_0} \right  ]
\end{equation}
where $t_1$ is defined by the equation
\begin{equation} \label{deft2}
- \log(1 - t_1) = - \log(1 - \rho) - \delta
\end{equation}
and $c$ is a positive universal constant.
\end{lemma}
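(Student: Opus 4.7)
The plan is to exploit the monotonicity of $\omega_1 - \omega_2$ to place the random time $T_2(t_1)$ inside $[\Theta_1, \Theta_2]$ on $H$, and then to execute a Brownian-tube argument starting at $T_2(t_1)$ that simultaneously controls the length of a sub-interval of $[\Theta_1, \Theta_2]$ and the size of the integrand $B(T)^{q-2}$. The main difficulty is to calibrate a single window length $\eta$ so that three competing requirements hold at once: (a) $B$ stays in a tube around $Q_{t_1}$ with probability bounded away from zero, (b) $\omega_2$ grows by strictly less than $\delta$ on the window (so that the window fits inside $[\Theta_1, \Theta_2]$), and (c) the resulting integral lower bound produces the exponent $q+1$ on each of $Q_{t_1}$ and $1 - Q_{t_1}$. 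The scale $\eta \asymp \delta\, Q_{t_1}^2(1 - Q_{t_1})^2$ balances all three.

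By \eqref{2eqineq}, the process $\omega_1 - \omega_2$ is non-decreasing in $T$, so on $H$ we have $\omega_2(\Theta_1) \leq \omega_1(\Theta_1) - \delta = -\log(1-\rho) - \delta = \omega_2(T_2(t_1))$; monotonicity of $\omega_2$ then gives $\Theta_1 \leq T_2(t_1) \leq \Theta_2$. Observe that $T_2(t_1)$ is the first $T$ at which $\int_0^T q(B(u))^{-2}\, du$ reaches $-\log(1-t_1)$, hence a stopping time for $B$, and $Q_{t_1} = B(T_2(t_1))$ is $\mathcal{F}_{T_2(t_1)}$-measurable. Set $\eta := c_0\, \delta\, Q_{t_1}^2(1-Q_{t_1})^2$ with a small universal constant $c_0 > 0$ to be calibrated, and define
\[ E := \Bigl\{ B(T) \in [Q_{t_1}/2,\, (1+Q_{t_1})/2] \text{ for every } T \in [T_2(t_1),\, T_2(t_1) + \eta] \Bigr\}. \]
The distance of $Q_{t_1}$ to either endpoint exceeds $\min(Q_{t_1}, 1-Q_{t_1})/2 \geq Q_{t_1}(1-Q_{t_1})/2$, so the strong Markov property at $T_2(t_1)$ together with the reflection principle yield $\PP(E^c \mid \mathcal{F}_{T_2(t_1)}) \leq 4\exp(-1/(8 c_0 \delta)) \leq 1/2$ for $c_0$ small enough, uniformly over $\delta < 1$.

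Using the elementary bound $q(s) \geq c_1\, s(1-s)$ on $(0,1)$, we find that on $E$, $q(B(T))^{-2} \leq 16/(c_1^2 Q_{t_1}^2(1-Q_{t_1})^2)$, so $\omega_2$ grows by at most $16 c_0 \delta/c_1^2$ over $[T_2(t_1), T_2(t_1)+\eta]$; after further shrinking $c_0$ so that $16 c_0 \leq c_1^2$, this growth is $\leq \delta$, forcing $T_2(t_1)+\eta \leq \Theta_2$ on $E$. Consequently $[T_2(t_1), T_2(t_1)+\eta] \subset [\Theta_1, \Theta_2]$, and
\[ \int_{\Theta_1}^{\Theta_2} B(T)^{q-2}\,dT \geq \eta \cdot \min_{s \in [Q_{t_1}/2,\, (1+Q_{t_1})/2]} s^{q-2}. \]
A case split (the minimum equals $(Q_{t_1}/2)^{q-2}$ for $q \geq 2$, whereas for $1 < q < 2$ the map $s \mapsto s^{q-2}$ is decreasing and is $\geq 1$ on $(0,1]$), combined with the trivial inequality $Q_{t_1}^{q-1}(1-Q_{t_1})^{q-1} \leq 1$, yields $\eta \cdot \min_s s^{q-2} \geq \tfrac{c}{2^q}\, \delta\, Q_{t_1}^{q+1}(1-Q_{t_1})^{q+1}$ for a universal $c > 0$. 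Multiplying by $\PP(E \mid \mathcal{F}_{T_2(t_1)}) \geq 1/2$ and taking conditional expectation via the tower property—using that $H$ is $\mathcal{F}_{t_0}$-measurable and that the future of $B$ past $T_2(t_1)$ is conditionally independent of $\mathcal{F}_{t_0}$ given $\mathcal{F}_{T_2(t_1)}$—yields the lemma.
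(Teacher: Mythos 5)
Your proof is correct, and while it shares the paper's overall skeleton --- identifying $\tilde\Theta := T_2(t_1)$ as the pivot, showing $\Theta_1 \le \tilde\Theta \le \Theta_2$ on $H$ via monotonicity of $\omega_1 - \omega_2$, and then running a Brownian localization argument started at $\tilde\Theta$ --- the localization step is genuinely different. The paper first derives the length bound $\Theta_2 - \tilde\Theta \ge c\,\EPS\,\min_{[\tilde\Theta,\tilde\Theta+1]} B^2(1-B)^2$ from the identity $\int_{\tilde\Theta}^{\Theta_2}\omega_2'(T)\,dT = \EPS$, then runs a two-step optional-stopping argument (reach $1/2$ before $B(1-B)$ halves, then stay in $[1/4,3/4]$ for unit time) that succeeds with probability $\gtrsim Q_{t_1}(1-Q_{t_1})$; it is that probabilistic factor which raises the exponent from $q$ to $q+1$. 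You instead pick a deterministic-length window $[\tilde\Theta,\tilde\Theta+\eta]$ with $\eta\asymp \EPS\,Q_{t_1}^2(1-Q_{t_1})^2$, use a single reflection estimate to show $B$ stays in the tube $[Q_{t_1}/2,(1+Q_{t_1})/2]$ with probability $\ge 1/2$, and check that on this tube event $\omega_2$ gains less than $\EPS$ over the window so it fits inside $[\Theta_1,\Theta_2]$; the factor $Q_{t_1}(1-Q_{t_1})$ now enters through the window length rather than the survival probability. Your version is arguably tighter and more self-contained --- one reflection bound replaces two optional-stopping steps, and for $q\ge 2$ your intermediate bound $\eta(Q_{t_1}/2)^{q-2}\asymp \EPS\, Q_{t_1}^q(1-Q_{t_1})^2/2^q$ is actually stronger than the stated $q+1$ exponent. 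The one soft spot, which the paper also glosses over, is the interchange between the filtration $\FF_{t_0}$ of $W$ and the stopped $\sigma$-algebra of the time-changed Brownian motion $B$ at $T_2(t_1)$; to apply the tower property cleanly one should note that on $H$ one has $T_1(t_0)\le T_2(t_1)$, so that $\FF_{t_0}$ is contained in the stopping-time $\sigma$-algebra there, exactly as the paper implicitly assumes.
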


Before we prove the lemma, we will need the estimate
\begin{lemma} \label{lemestq}
There exist universal constants $c,C>0$ such that for all $0 < s < 1$,
\begin{equation} \label{estq1}
c s (1-s) \leq q(s) \leq C s(1-s) \sqrt{| \log(s(1-s)) |}
\end{equation}
Moreover, the function $q(s)/s$ is decreasing and one has
\begin{equation} \label{estq2}
q(s) \leq C s \sqrt{|\log s|}, ~~ \forall 0 < s < 1
\end{equation}
\end{lemma}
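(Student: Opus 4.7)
The plan is to first rewrite $q$ in closed form. Since $\varphi'(x) = -x\varphi(x)$, where $\varphi(x) = (2\pi)^{-1/2}e^{-x^2/2}$ is the standard Gaussian density, we have
\[ q(s) \;=\; -\int_{-\infty}^{\Phi^{-1}(s)} x\, d\gamma^1(x) \;=\; \varphi(\Phi^{-1}(s)), \]
which identifies $q$ with the classical Gaussian isoperimetric profile. The symmetries $\Phi(-\alpha) = 1 - \Phi(\alpha)$ and $\varphi(-\alpha) = \varphi(\alpha)$ yield $q(s) = q(1-s)$, so for \eqref{estq1} it suffices to consider $s \in (0, 1/2]$, where $s(1-s) \asymp s$ and $|\log(s(1-s))| \asymp |\log s|$.

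Setting $\alpha = \Phi^{-1}(s) \leq 0$, so that $\varphi(\alpha) = q(s)$, I would then invoke the standard Mills-ratio inequalities
\[ \frac{|\alpha|}{\alpha^2+1}\,\varphi(\alpha) \;\leq\; \Phi(\alpha) \;\leq\; \frac{\varphi(\alpha)}{|\alpha|} \qquad (\alpha < 0), \]
which, rewritten in terms of $s$, become $q(s) \geq |\alpha|\, s$ and $q(s) \leq s(\alpha^2+1)/|\alpha|$. For $s$ bounded away from $1/2$, $|\alpha|$ is bounded below, so the first inequality gives $q(s) \geq c\, s \geq c\, s(1-s)$; for $s$ close to $1/2$ one instead uses continuity and $q(1/2) = 1/\sqrt{2\pi} > 0$ together with $s(1-s) \leq 1/4$. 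The second Mills estimate, combined with the elementary tail bound $\alpha^2 \leq 2|\log s| + O(1)$ (obtained by plugging $s \leq \varphi(\alpha)$ into the definition of $\varphi$), yields $q(s) \leq Cs\sqrt{|\log s|}$ for $s \leq 1/2$, which simultaneously establishes \eqref{estq2} in this range and the upper half of \eqref{estq1}.

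For the monotonicity of $q(s)/s$, differentiate: from $q = \varphi \circ \Phi^{-1}$, the chain rule and $\varphi'(\alpha) = -\alpha\varphi(\alpha)$ give $q'(s) = -\Phi^{-1}(s) = -\alpha$, whence
\[ \frac{d}{ds}\!\left(\frac{q(s)}{s}\right) \;=\; -\,\frac{\alpha\,\Phi(\alpha) + \varphi(\alpha)}{s^{2}}. \]
For $\alpha \geq 0$ the numerator is clearly nonnegative, and for $\alpha < 0$ the required inequality $\varphi(\alpha) \geq |\alpha|\,\Phi(\alpha)$ is precisely the right-hand Mills estimate already invoked. Hence $q(s)/s$ is decreasing. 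To finish \eqref{estq2} for $s \geq 1/2$, apply the symmetry $q(s) = q(1-s)$ together with the $s \leq 1/2$ case to get $q(s) \leq C(1-s)\sqrt{|\log(1-s)|}$, and then verify the pointwise comparison $(1-s)\sqrt{|\log(1-s)|} \leq C' s\sqrt{|\log s|}$ on $[1/2,1)$ (both sides vanish at $s=1$, and a short calculus check shows the ratio stays bounded since $(1-s)|\log(1-s)|$ is bounded while $s\sqrt{|\log s|} \gtrsim \sqrt{1-s}$).

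The main obstacle is essentially bookkeeping constants across the three regimes $s\to 0$, $s$ near $1/2$, and $s\to 1$, and checking that the crude Mills-ratio inputs are sufficient in each; once one commits to the identification $q = \varphi \circ \Phi^{-1}$, all three statements reduce to routine Gaussian tail estimates.
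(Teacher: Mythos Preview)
Your argument is correct and, in fact, more complete than the paper's on one point: you give an explicit proof that $q(s)/s$ is decreasing (via the derivative computation and the Mills upper bound $\Phi(\alpha)\le \varphi(\alpha)/|\alpha|$), whereas the paper essentially leaves this assertion unverified. One small wrinkle: the parenthetical justification ``$s\le\varphi(\alpha)$'' for the bound $\alpha^{2}\le 2|\log s|+O(1)$ is not literally valid near $\alpha=0$ (at $s=1/2$ one has $\Phi(0)=1/2>\varphi(0)$). The conclusion is unaffected, since for $|\alpha|\le 1$ the bound $\alpha^2\le 1\le 2|\log s|+O(1)$ is trivial, and for $|\alpha|\ge 1$ the Mills upper bound already gives $s\le\varphi(\alpha)$; you may want to phrase it that way.

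The route differs from the paper's in two places. For the upper bound and \eqref{estq2} you quote the classical two-sided Mills ratio, while the paper derives an ad hoc lower bound on the Gaussian tail, $\int_y^\infty e^{-x^2/2}dx\ge e^{-3}(y+1)^{-1}e^{-y^2/2}$, to reach the same conclusion. For the lower bound $q(s)\ge c\,s(1-s)$ you split into $s$ small (where $|\alpha|$ is bounded below and Mills gives $q(s)\ge|\alpha|s$) and $s$ near $1/2$ (compactness), whereas the paper compares $q$ globally with the parabola $h(s)=\tfrac{4}{\sqrt{2\pi}}s(1-s)$: both functions agree at $0,1/2,1$, and convexity of $q'$ on $[0,1/2]$ (from $q''(s)=-\sqrt{2\pi}e^{\Psi(s)^2/2}$) forces $q\ge h$ everywhere. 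The paper's argument yields the explicit constant $c=4/\sqrt{2\pi}$; yours is quicker and entirely standard but does not produce an explicit constant.
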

The elementary yet technical proof of this lemma is postponed to the appendix. \\

\begin{proof}[\textbf{Proof of Lemma \ref{lemeps}}]
Define $\tilde \Theta = T_2(t_1)$. By definition, we have
\begin{equation} \label{eqtt1}
\omega_2(\tilde \Theta) = \omega_2(T_2(t_1)) = - \log ( 1 - t_1) = - \log(1 - \rho) - \delta.
\end{equation}
Thanks to equation (\ref{2eqineq}), whenever the event $H$ holds we know that for all $T_1(t_0) \leq T < T_f$, one has
$$
\omega_2(T) \leq \omega_1(T) - \EPS.
$$
In particular, since $t_0 < \rho$, we may take $T = T_1(\rho) = \Theta_1$ in the previous equation, which
gives
\begin{equation} \label{eqtt2}
\omega_2(\Theta_1) \leq \omega_1(\Theta_1) - \EPS = - \log(1 - \rho) - \EPS.
\end{equation}
We conclude from equations (\ref{eqtt1}) and (\ref{eqtt2}) that
\begin{equation} \label{eqtt3}
\Theta_1 \leq \tilde \Theta \leq \Theta_2.
\end{equation}
Moreover, since $\omega_2(\Theta_2) = - \log (1 - \rho)$, equation (\ref{eqtt1}) gives
$$
\omega_2(\Theta_2) - \omega_2 ( \tilde \Theta) = \EPS.
$$
This equation written differently is just
$$
\int_{\tilde \Theta}^{\Theta_2} \omega_2'(T) d T = \EPS
$$
and an application of formula (\ref{2deromega}) yields
$$
\int_{\tilde \Theta}^{\Theta_2} q(B(T))^{-2} d T = \EPS.
$$
Consequently,
$$
(\Theta_2 - \tilde \Theta) \max_{\tilde \Theta \leq T \leq \Theta_2} q(B(T))^{-2} \geq \EPS
$$
or in other words,
$$
\Theta_2 - \tilde \Theta  \geq \EPS \min_{\tilde \Theta \leq T \leq \Theta_2} q(B(T))^2.
$$
Since $q(s) < 1$ for all $s \in [0,1]$ and by the assumption $\delta < 1$ we get that
$$
\Theta_2 - \tilde \Theta \geq \EPS \min_{\tilde \Theta \leq T \leq \tilde \Theta + 1} q(B(T))^2.
$$
(here, in case that $\tilde \Theta + 1 > T_f$, we define $\min_{\tilde \Theta \leq T \leq \tilde \Theta + 1} q(B(T))^2 = 0$). With the estimate (\ref{estq1}), this formula becomes
$$
\Theta_2 - \tilde \Theta \geq c \EPS \min_{\tilde \Theta \leq T \leq \tilde \Theta + 1} B(T)^2(1-B(T))^2
$$
for a universal constant $c>0$. This implies that for all $q>1$,
$$
\int_{\tilde \Theta}^{\Theta_2} B(T)^{q-2} dT \geq c \EPS \min_{\tilde \Theta \leq T \leq \tilde \Theta + 1} B(T)^ {q}(1-B(T))^{q}.
$$
Now, since the expression in the integral is non-negative and by (\ref{eqtt3}), we may integrate on the larger interval $\Theta_1 < T < \Theta_2$ and finally get
\begin{equation} \label{thetas}
\int_{\Theta_1}^{\Theta_2} B(T)^{q-2} dT \geq c \EPS \min_{\tilde \Theta \leq T \leq \tilde \Theta + 1} B(T)^ {q}(1-B(T))^{q}.
\end{equation}
Next, we would like to bound from below the probability that the right hand side is not too small. Define the stopping time
$$
U = \min \left \{ T \geq \tilde \Theta; ~ B(T) = \frac{1}{2} \mbox { or } B(T)(1-B(T)) = B(\tilde \Theta)(1-B(\tilde \Theta) ) / 2 \right  \}.
$$
Since $B(T)$ is a martingale, in complete analogy with the derivation of equation (\ref{pbu}), we get using an optional stopping argument
\begin{equation}
\PP( B(U) = 1/2 ~ | B(\tilde \Theta)) \geq B(\tilde \Theta) (1 - B(\tilde \Theta)).
\end{equation}
and since there exists a constant $c_1>0$ such that a Brownian motion starting at $1/2$ does exit the interval $[1/4,3/4]$ by time $1$ with probability at least $c_1$, we get
$$
\PP \left . \left ( {B(T)(1-B(T)) > B(\tilde \Theta)(1-B(\tilde \Theta)) / 2,} \atop {\forall U \leq T \leq U + 1~}  \right |  B(U)  = 1/2 \right  ) > c_1.
$$
Combined with the previous inequality this becomes
$$
\PP  \left . \left ( \min_{\tilde \Theta \leq T \leq \tilde \Theta + 1} B(T)^{q}(1-B(T))^{q} \geq \frac{B(\tilde \Theta)^{q}(1-B(\tilde \Theta))^{q}}{2^{q}} \right  | ~ B(\tilde \Theta) \right  ) > c_1 B(\tilde \Theta)(1-B(\tilde \Theta)).
$$
Together with equation (\ref{thetas}) and with the assumption $\EPS < 1$, we get
$$
\PP \left . \left ( \int_{\Theta_1}^{\Theta_2} B(T)^{q-2} dT \geq c \EPS \frac{ B(\tilde \Theta)^{q}(1-B(\tilde \Theta))^{q}}{ 2^{q}}  \right | B(\tilde \Theta) \right ) \geq c_1 B(\tilde \Theta)(1-B(\tilde \Theta)).
$$
Taking expectation over $B(\tilde \Theta)$ gives, for a universal constant $c'>0$,
$$
\EE \left . \left [ \int_{\Theta_1}^{\Theta_2} B(T)^{q-2} dT \right | \mathcal{F}_{t_0} \right ] \geq \frac{c' \EPS}{2^{q}} \left  .  \EE \left [ B(\tilde \Theta)^{q+1}(1-B(\tilde \Theta))^{q+1} \right  | \mathcal{F}_{t_0} \right  ] =
$$
$$
\frac{c' \EPS}{2^{q}} \left .  \EE \left [ Q_{t_1}^{q+1}(1-Q_{t_1})^{q+1} \right | \mathcal{F}_{t_0} \right ].
$$
This proves equation (\ref{eqeps1}) and the proof is complete.
\end{proof}

Before we can finally prove the theorem, the only ingredient we need is a bound the right hand side of formula (\ref{eqeps1}), provided in the next lemma. Roughly speaking, this lemma ensures us that when $\Theta_1$ is reached then $B(T)$ is bounded away from $0$ and from $1$ with a large enough probability.

\begin{lemma} \label{lemrho}
There exists a universal constant $c>0$ such that for any number $q>1$ and for all $0 \leq t_0, t_1 \leq 1$ such that $0 \leq t_0 \leq \min(1/2, t_1)$,
$$
\EE[ Q_{t_1}^{q+1} (1 - Q_{t_1})^{q+1} ~ | \mathcal{F}_{t_0} ] \geq c^{q} S_{t_0}^{q+2} (1 - S_{t_0})^{q+2} \sqrt{1 - t_1}.
$$
\end{lemma}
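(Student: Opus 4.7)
The strategy is to reduce the conditional expectation to an unconditional one-dimensional Gaussian integral using the Markov property of the coupled $Q$-process, and then lower-bound that integral by restricting to an event on which $Q_{t_1}$ is bounded away from $\{0,1\}$. Using the alternative construction of the coupling from Section \ref{sec2}, we have $Q_t = B(T_2(t))$, where $T_2$ is determined by the ODE $\tau_2'(T) = (1-\tau_2(T))q(B(T))^{-2}$ driven by the Brownian motion $B$; in particular $Q_t$ satisfies $dQ_t = (1-t)^{-1/2}q(Q_t)\,d\bar B_t$, so it is a time-inhomogeneous Markov process. Define $s^*$ to be the $\mathcal{F}_{t_0}$-measurable time with $T_2(s^*) = [S]_{t_0}$. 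By the inequality $[Q]_t \geq [S]_t$ proven in Section \ref{sec2}, we have $s^* \leq t_0 \leq 1/2$, and $Q_{s^*} = B([S]_{t_0}) = S_{t_0}$. The Markov property of $Q$ then implies that the conditional law of $Q_{t_1}$ given $\mathcal{F}_{t_0}$ depends only on the pair $(s^*, S_{t_0})$.

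Since $H(A)$ is a half-space, we have the explicit representation $Q_t = \Phi((\eta_t - \alpha)/\sqrt{1-t})$ for a standard 1D Brownian motion $\eta_t$ and a constant $\alpha = -\Phi^{-1}(S_0)$. The condition $Q_{s^*} = S_{t_0}$ forces $\eta_{s^*} = \alpha + \sqrt{1-s^*}\,\Phi^{-1}(S_{t_0})$, so that conditionally $\eta_{t_1} \sim N(\eta_{s^*}, t_1-s^*)$. Restricting to the event $E := \{|\eta_{t_1} - \alpha| \leq \sqrt{1-t_1}\}$ one has $|\xi_{t_1}| \leq 1$, so $Q_{t_1} \in [\Phi(-1), \Phi(1)]$ and $Q_{t_1}^{q+1}(1-Q_{t_1})^{q+1} \geq c_0^{q+1}$ for a universal $c_0 > 0$. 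The probability $\PP(E | \mathcal{F}_{t_0})$ is then a direct 1D Gaussian-density computation: in the regime where $t_1 - s^*$ is comparable to $1-s^*$, combining $1-s^* \geq 1/2$ (from $t_0 \leq 1/2$) with Lemma \ref{lemestq} to convert $\Phi^{-1}(S_{t_0})^2$ into a polynomial factor in $S_{t_0}(1-S_{t_0})$ yields $\PP(E | \mathcal{F}_{t_0}) \gtrsim \sqrt{1-t_1}\,(S_{t_0}(1-S_{t_0}))^K$ for some universal $K$; the factor $\sqrt{1-t_1}$ is precisely the width of the interval defining $E$ in $\eta$-coordinates.

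The main obstacle is to handle the regime where $t_1 - s^*$ is small compared to $1-s^*$, in which the Gaussian density of $\eta_{t_1}$ near $\alpha$ degenerates and the above estimate weakens. In that regime one argues instead by concentration: It\^{o}'s isometry together with $q(\cdot) \leq q(1/2)$ gives $\EE[(Q_{t_1} - S_{t_0})^2 \mid \mathcal{F}_{t_0}] \leq (2\pi)^{-1}\log((1-s^*)/(1-t_1))$, so Chebyshev shows that $Q_{t_1}$ lies within $S_{t_0}(1-S_{t_0})/2$ of $S_{t_0}$ with large conditional probability, and on that event $Q_{t_1}(1-Q_{t_1}) \gtrsim S_{t_0}(1-S_{t_0})$. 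Patching the two regimes together and adjusting constants to absorb polynomial losses into the exponent $q+2$ of $S_{t_0}(1-S_{t_0})$ on the right-hand side then yields the claimed inequality.
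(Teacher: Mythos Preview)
Your reduction to the explicit one–dimensional representation $Q_t=\Phi\bigl((\eta_t-\alpha)/\sqrt{1-t}\bigr)$ and the identification of the $\mathcal F_{t_0}$-measurable starting time $s^*=\tau_2([S]_{t_0})$ with $Q_{s^*}=S_{t_0}$ are correct and parallel what the paper does. The first regime (when $t_1-s^*$ is a fixed fraction of $1-s^*$) is also fine: there $(1-s^*)/(t_1-s^*)$ is bounded, so $\exp\bigl(-\tfrac{(1-s^*)\Phi^{-1}(S_{t_0})^2}{2(t_1-s^*)}\bigr)$ is at least a fixed power of $e^{-\Phi^{-1}(S_{t_0})^2/2}=\sqrt{2\pi}\,q(S_{t_0})\gtrsim S_{t_0}(1-S_{t_0})$, and the width of the target interval gives the factor $\sqrt{1-t_1}$.

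The gap is in the second regime. Your Chebyshev step does not give what you claim. In the range $t_1-s^*\le (1-s^*)/2$ you only get
\[
\EE\bigl[(Q_{t_1}-S_{t_0})^2\mid\mathcal F_{t_0}\bigr]\le (2\pi)^{-1}\log\frac{1-s^*}{1-t_1}\le (2\pi)^{-1}\log 2,
\]
which is a fixed universal constant, not something small compared with $\bigl(S_{t_0}(1-S_{t_0})\bigr)^2$. When $S_{t_0}$ is close to $0$ or $1$ the Chebyshev bound $\PP\bigl(|Q_{t_1}-S_{t_0}|\ge S_{t_0}(1-S_{t_0})/2\bigr)\le 4\,\mathrm{Var}/(S_{t_0}(1-S_{t_0}))^2$ is vacuous. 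If you instead try to shrink the threshold so that Chebyshev becomes useful (say, take the regime split at $\log\tfrac{1-s^*}{1-t_1}\le\kappa(S_{t_0}(1-S_{t_0}))^2$), then in the complementary regime $t_1-s^*$ can be as small as a multiple of $(S_{t_0}(1-S_{t_0}))^2(1-s^*)$, and your density factor becomes $\exp\bigl(-c\,|\Phi^{-1}(S_{t_0})|^2/(S_{t_0}(1-S_{t_0}))^2\bigr)$, which is far from polynomial in $S_{t_0}(1-S_{t_0})$. So the two regimes cannot be patched to cover all cases while keeping the exponent $q+2$ on the right-hand side.

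The paper closes this gap with a genuinely different idea: it inserts an optional-stopping step. Starting from $s^*$ it waits for the stopping time $u$ at which either $Q_t=1/2$ or $Q_t(1-Q_t)$ drops to $S_{t_0}(1-S_{t_0})/2$. By optional stopping, $\PP(Q_u=1/2\mid\mathcal F_{t_0})\ge S_{t_0}(1-S_{t_0})$. On $\{t_1<u\}$ one has $Q_{t_1}(1-Q_{t_1})\ge S_{t_0}(1-S_{t_0})/2$ deterministically. On $\{Q_u=1/2,\ u\le t_1\}$ the Brownian motion $\eta$ is \emph{exactly} at $\alpha$ at time $u$, so the width-of-interval estimate gives $\PP(Q_{t_1}\in[1/4,3/4]\mid\tilde{\mathcal F}_u)\ge c\sqrt{1-t_1}$ with no loss depending on $S_{t_0}$. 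Combining the two events yields the stated bound with the correct exponent. The stopping-time reset to $Q=1/2$ is precisely what eliminates the dependence on $|\Phi^{-1}(S_{t_0})|$ that breaks your direct Gaussian-density calculation.
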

\begin{proof}
Define a stopping time
$$
u = \min \left  \{t \geq \tau_2(T_1(t_0)); ~~ Q_{t} (1 - Q_{t}) = S_{t_0} (1 - S_{t_0}) / 2 \mbox { or } Q_t = 1/2 \right \}.
$$
Since $Q_t$ is a martingale, and since $Q_{\tau_2(T_1(t_0))} = S_{t_0}$, we can use the optional stopping theorem with a similar argument as the one preceding equation (\ref{pbu}) to get
\begin{equation}
\PP( Q_u = 1/2 ~ | \FF_{t_0}) \geq S_{t_0}(1-S_{t_0}).
\end{equation}
We claim that it is enough to show that if $Q_u = 1/2$ and $u < t_1$, then
\begin{equation} \label{stillneed}
\left . \PP \left  (Q_{t_1} \in [1/4,3/4] ~ \right | \tilde \FF_u \right  ) > c \sqrt{1 - t_1}
\end{equation}
for a universal constant $c>0$, where $\tilde \FF_u$ is the $\sigma$-algebra generated by the Brownian motion $W_t$ stopped at time $\tau_1(T_2(u))$. Indeed, define the event $E = \{Q_u = 1/2 \} \cup \{t_1 < u\}$.
By the above equation and by the definition of $u$ we have, under the assumption that (\ref{stillneed}) holds,
$$
\EE[ Q_{t_1}^{q+1} (1 - Q_{t_1})^{q+1} \mathbf{1}_E | \FF_{t_0} ] \geq
$$
$$
\PP(t_1 < u | \FF_{t_0} ) S_{t_0}^{q+1} (1 - S_{t_0})^{q+1} 2^{-q-1} + \PP( Q_u = 1/2 ~ \& ~ t_1 \geq u | \FF_{t_0} ) (1/4)^{2q + 4} c \sqrt{1 - t_1} \geq
$$
$$
\PP(Q_u = 1/2 | \FF_{t_0})  c_1^{q+1} S_{t_0}^{q+1} (1 - S_{t_0})^{q+1} \sqrt{1 - t_1} \geq c_1^{q+1} S_{t_0}^{q+2} (1 - S_{t_0})^{q+2} \sqrt{1 - t_1}.
$$
for a universal constant $c_1 > 0$, which would finish the proof. It yet remains to prove formula (\ref{stillneed}). \\

In order to get an estimate regarding the distribution of $Q_{t_1}$, we recall the original definition of the process
$Q_t$ in equation (\ref{defQt}):
$$
Q_t = \int_{H(A)}  \gamma_{\tilde W_t, \sqrt{1 - t}}(x) dx
$$
where $\tilde W_t$ is a Brownian motion. According to this equation,
$$
\PP(Q_{t_1} \in [1/4,3/4] ~ | \tilde F_u ) =
$$
$$
\PP \left . \left (\int_{H(A)} \gamma_{\tilde W_{t_1}, \sqrt{1 - t_1}}(x) dx \in [1/4,3/4] ~ \right  | \tilde F_u \right  ).
$$
The above formula clearly does not change if we project both $H(A)$ and $\tilde W_t$ on the direction $v(H(A))$. Therefore, we may assume that $H(A) = [\alpha, \infty)$ for some $\alpha \in \RR$. It is easy
to check that
$$
\left  |\tilde W_{t_1} - \alpha \right | < 0.1 \sqrt{1 - t_1} \Rightarrow \int_{H(A)} \gamma_{\tilde W_{t_1}, \sqrt{1 - t_1}}(x) dx \in [1/4,3/4].
$$
Therefore, it is enough to show that whenever $\tilde W_{u} = \alpha$ and $u < t_1$,
\begin{equation} \label{needneed}
\left . \PP \left ( \left  |\tilde W_{t_1} - \alpha \right | < 0.1 \sqrt{1-t_1}  \right | \tilde W_u \right ) > c \sqrt{1-t_1}
\end{equation}
for a universal constant $c>0$. Noting that the assumption $Q_u = 1/2$ implies that $\tilde W_{u} = \alpha$ and recalling that $t_1 < 1$, we deduce that the above will be implied by
$$
\gamma^1 \bigl ([-0.1 \sqrt{1-t_1}, 0.1 \sqrt{1-t_1}] \bigr ) > c \sqrt{1-t_1}
$$
which clearly holds. The lemma is complete.
\end{proof}

We are finally ready to prove our robustness estimate. The proof is just a combination of the lemmas in this section.

\begin{proof}[\textbf{Proof of Theorem \ref{thmrobust}}]
Define
$$
\alpha = c_2 |\log \epsilon_0|^{-1} S_0^7(1-S_0)^7
$$
and
$$
G = \left  \{ |\epsilon_t - \epsilon_0| \leq \epsilon_0 / 2, ~~ \forall 0 \leq t \leq \alpha \right \} \cap \left  \{ \tau_1(\TTT) \geq  \frac{1}{2} \right  \}
$$
as in equation (\ref{defg}) above. According to lemma  \ref{epsstrip}, we have
\begin{equation} \label{pelarge}
\PP(G) \geq c_2 S_0 (1 - S_0).
\end{equation}
According to Lemma \ref{lembeta}, we know that
$$
G \mbox { holds } \Rightarrow \omega_1(T_1(\alpha)) - \omega_2(T_1(\alpha)) \geq \alpha \epsilon_0 /2.
$$
Together with the legitimate assumption that $c_2 < 1$, it is easy to verify that the assumption $\eps(A) \leq e^{-1/\rho}$ guarantees that $\rho \geq \alpha$. Thus, we can invoke Lemma \ref{lemeps} with $t_0 = \alpha$ and $\EPS = \eps_0 \alpha / 2$ to get
\begin{equation} \label{almost1}
\left . \EE \left [\int_{\Theta_1}^{\Theta_2} B(T)^{q-2} dT \right  | G \right  ] \geq 2^{-q} c_3 \epsilon_0 \alpha \EE \left . \left [Q_{t_1}^{q+1} (1 - Q_{t_1})^{q+1} \right  | G \right  ]
\end{equation}
where $t_1$ is defined in equation (\ref{deft2}) and $c_3 > 0$ is a universal constant. Now, it follows from equation (\ref{2eqineq}) that $\Theta_2 \geq \Theta_1$ almost surely. Using this together with equations (\ref{mainthing2}) and (\ref{pelarge}) gives
\begin{equation} \label{bigeq1}
\St^q_\rho(H(A)) - \St^q_\rho(A) \geq \frac{1}{2} q(q-1) \EE \left . \left [\int_{\Theta_1}^{\Theta_2} B(T)^{q-2} dT \right  | G \right  ] \PP(G) \geq
\end{equation}
$$
c_4 2^{-q} (q-1) \eps_0 \alpha S_0 (1 - S_0) \EE \left . \left [Q_{t_1}^{q+1} (1 - Q_{t_1})^{q+1} \right  | G  \right  ]
$$
where $c_4 > 0$ is a universal constant. Next, by invoking Lemma \ref{lemrho} with $t_0 = \alpha$ (and $t_1$ as we have already defined above), we learn that almost surely
$$
\EE \left . \left [ Q_{t_1}^{q+1} (1 - Q_{t_1})^{q+1} ~ \right  | \FF_{\alpha} \right  ] \geq c_5^{q} S_\alpha^{q+2} (1 - S_\alpha)^{q+2} \sqrt{1 - t_1}
$$
and $c_5 > 0$ is a universal constant. By definition of $\TTT$, together with the legitimate assumption that $\alpha < 1/2$, we have that
$$
G \mbox { holds } \Rightarrow S_{\alpha}(1-S_{\alpha}) \geq S_0(1-S_0) / 2,
$$
so by taking expectation over $G$, the two last equations give
$$
\EE \left . \left [ Q_{t_1}^{q+1} (1 - Q_{t_1})^{q+1} ~ \right  | G \right  ] \geq (c_5/2)^{q} S_0^{q+2} (1 - S_0)^{q+2} \sqrt{1 - t_1}.
$$
Combining the last formula with (\ref{bigeq1}) gives
$$
\St^q_\rho(H(A)) - \St^q_\rho(A) \geq c_6^{q} (q-1) \epsilon_0 \alpha S_0^{q+3} (1 - S_0)^{q+3} \sqrt{1-t_1}
$$
where $c_6 > 0$ is a universal constant. Finally, using the definition of $\alpha$, this gives
$$
\St^q_\rho(H(A)) - \St^q_\rho(A) \geq c_7^q (q-1) \epsilon_0 |\log{\epsilon_0}|^{-1} S_0^{q+11} (1 - S_0)^{q+11} \sqrt{1-t_1} =
$$
$$
c_7^q (q-1) \eps(A) |\log{\eps(A)}|^{-1} \gamma(A)^{q+11} (1 - \gamma(A))^{q+11} \sqrt{1-t_1}.
$$
for a universal constant $c_7 > 0$. The proof of the lower bound is complete. The upper bound is proven in subsection \ref{subupper}.
\end{proof}
\bigskip
Once Theorem \ref{thmrobust} is established, the proof of Corollary (\ref{cortv}) is reduced to a simple upper bound to $\delta(A)^2$ in terms of $\eps(A)$.

\begin{proof} [\textbf{Proof of Corollary \ref{cortv}}]

Suppose that $\gamma(A \Delta H(A)) = \delta$. Observe first that suffices to show that
\begin{equation} \label{suff}
q(H(A)) - q(A) \geq c \delta^2
\end{equation}
for a universal constant $c>0$. Indeed, this assumption combined with the bound (\ref{estq1}) would attain
$$
\eps(A) = q(H(A))^2 - q(A)^2 = (q(H(A)) - q(A))(q(H(A)) + q(A)) \geq
$$
$$
c_1 \gamma(A) (1 - \gamma(A)) \delta^2
$$
and plugging this fact into equations (\ref{robustineq}) and (\ref{robustineq2}) would finish the proof. \\

We turn to prove (\ref{suff}) which is, in some sense, a quantitative version of Claim \ref{claim1}. Define
$$
\theta = \frac{\int_{H(A)} x \gamma(x) dx}{\left |\int_{H(A)} x \gamma(x) dx \right |}.
$$
Denote $\mu = \gamma|_A$, the restriction of the Gaussian measure to $A$, and let $\tilde \mu$ be the push-forward
of $\mu$ under the map $x \to \langle \theta, x \rangle$. We have by definition
\begin{equation}
q(A) = \left | \int_{\RR^n} \langle \theta, x \rangle d \mu (x)  \right | = \left | \int_{\RR} x d \tilde \mu(x) \right |.
\end{equation}
Clearly, the measure $\tilde \mu$ is absolutely continuous with respect to $\gamma^1(x)$, and we may define $m(x) = \frac{d \tilde \mu}{d \gamma^1}(x)$. \\

The choice of the direction of $\theta$ determines that $\int_{\RR} x m(x) d \gamma^1(x) \geq 0$. Let $h(x)$ be the function of the form $h(x) = \mathbf{1}_{x \geq \alpha}$ where $\alpha$ is chosen such that $\int h(x) d \gamma^1(x) = \int m(x) d \gamma^1(x)$. By definition
$$
q(H(A)) = \int_{\RR} x h(x) d \gamma^1(x).
$$
The proof is reduced to showing
\begin{equation} \label{sufff}
\int_{\RR} x (h(x) - m(x)) d \gamma^1(x) \geq c \delta^2.
\end{equation}
We write
$$
\int_{\RR} x (h(x) - m(x)) d \gamma^1(x) = \int_{\RR} (x - \alpha) (h(x) - m(x)) d \gamma^1(x).
$$
Now, by definition of $h(x)$ and by the fact that $0 \leq m(x) \leq 1$ for all $x$, the function $h(x) - m(x)$ has the same sign as $(x-\alpha)$, so it is enough to prove that
\begin{equation} \label{eqgx}
\int_{\RR} |x - \alpha| g(x) d \gamma^1(x) \geq c \delta^2
\end{equation}
where $g(x) =  |h(x) - m(x)| \gamma^1(x)$ and $c>0$ is a universal constant. Thanks to the fact that
$$
\langle x, \theta \rangle \geq 0, ~~ \forall x \in H(A) \setminus A
$$
and
$$
\langle x, \theta \rangle \leq 0, ~~ \forall x \in A \setminus H(A)
$$
we learn that
$$
\int_{\RR} g(x) dx = \gamma(A \Delta H(A)) = \delta.
$$
Finally, since $\gamma^1(x) \leq 1$ for all $x$, we have $g(x) \leq 1$, and by Markov's inequality
$$
\int_{ \{ |x - \alpha| \geq \delta / 4  \}  } g(x) dx \geq \delta / 2.
$$
Since $g$ is non-negative, we get
$$
\int_{\RR} |x - \alpha| g(x) dx \geq \delta^2 / 8
$$
so equation (\ref{eqgx}) is proven and the corollary is established.
\end{proof}

\subsection{Calculation of the differential}

This entire subsection, which is the first step in the proof of Proposition \ref{mainsec4} is dedicated to the calculation of the
differential of the process
$$
\epsilon_t = q(S_t)^2 - q(A_t)^2.
$$

It will be a straight-forward calculation based on repeated use of It\^{o}'s formula and on equation (\ref{stochasticfub}). Before we begin the calculation, we introduce a few definitions and recall some facts from section \ref{sec2}.

Our starting point is formula (\ref{dst}), which reads
$$
d S_t = (1-t)^{-1} \left \langle \int_A (x - W_t) F_t(x) dx, d W_t \right  \rangle.
$$
Define,
$$
V_t = \int_A (x - W_t) F_t(x) dx
$$
so
$$
d S_t = (1-t)^{-1} \langle V_t, d W_t \rangle
$$
and
\begin{equation} \label{vtqv}
d [S]_t = (1-t)^{-2} |V_t|^2 dt.
\end{equation}
It will also be convenient to define the linear map
$$
L_t(x) := \frac{x - W_t}{\sqrt{1-t}}
$$
so that $L_t$ pushes forward the measure whose density is $F_t(x)$ to the standard Gaussian measure. Also note that $A_t = L_t A$. By substituting $y = L_t x$, we have
$$
d [S]_t = (1-t)^{-2} \left | \int_A (x - W_t) F_t(x) dx \right |^2 dt =
$$
$$
(1-t)^{-1} \left | \int_{A_t} y d \gamma(y) \right |^2 dt = (1-t)^{-1} q(A_t)^2 dt
$$
so together with equation (\ref{vtqv}) we have
$$
(1-t)^{-1} q(A_t)^2 = (1-t)^{-2} |V_t|^2.
$$
This encourages us to define
$$
U_t = \frac{V_t}{\sqrt{1-t}}, ~~ u_t = \frac{U_t}{|U_t|}
$$
so that
\begin{equation} \label{eqUt}
U_t = \int_{A_t} x d \gamma(x), ~~ q(A_t) = |U_t|.
\end{equation}
So far, we have established that
\begin{equation} \label{eps22}
\epsilon_t = q(S_t)^2 - |U_t|^2 = q(S_t)^2 - \frac{|V_t|^2}{1-t}.
\end{equation}
$$
~
$$
We are finally ready to begin differentiating, and we start with the second term. We first calculate, using equations \eqref{stochastic} and \eqref{stochasticfub},
$$
d V_t = d \int_A (x - W_t) F_t(x) dx =
$$
$$
- d W_t \int_A  F_t(x) dx + \int_A (x - W_t) d F_t(x) dx - d \left [ W, S \right ]_t =
$$
$$
- d W_t \int_A F_t(x) dx  + (1-t)^{-1} \left ( \int_A (x - W_t)^{\otimes 2} F_t(x) dx \right ) d W_t - (1-t)^{-1} V_t dt =
$$
(substituting $x \to L_t(x)$ in the first and second terms)
$$
- \gamma(A_t) d W_t + \left ( \int_{A_t} x \otimes x d \gamma(x) \right ) d W_t - (1-t)^{-1} V_t dt =
$$
$$
B_t d W_t - (1-t)^{-1} V_t dt
$$
where
\begin{equation} \label{defbt}
B_t = \int_{A_t} \left ( x \otimes x - \mathrm{Id} \right ) \gamma(x) dx.
\end{equation}
Next, we have
$$
d \left ( |V_t|^2 \right ) = \frac{-2 |V_t|^2 dt}{1-t}  + 2 \langle V_t, B_t d W_t \rangle +  \left \Vert B_t \right \Vert_{HS}^2 dt
$$
where the last term stands for the squared Hilbert-Schmidt norm of $B_t$. And so
\begin{equation} \label{dVt2}
d \left ( \frac{|V_t|^2}{1-t} \right ) = - (1-t)^{-2} |V_t|^2 dt  + 2 (1-t)^{-1} \langle B_t V_t, d W_t \rangle + (1-t)^{-1} \left \Vert B_t \right \Vert_{HS}^2 dt.
\end{equation}
In other words,
\begin{equation} \label{eps23}
d |U_t|^2 = - (1-t)^{-1} |U_t|^2 dt +
2 (1-t)^{-1/2} \left \langle B_t u_t, d W_t \right \rangle |U_t| + (1-t)^{-1} \left \Vert B_t \right \Vert_{HS}^2 dt.
\end{equation}
\bigskip
Our next goal is to calculate the differential of the term
$$
q(S_t) = - \frac{1}{\sqrt{2 \pi}} \int_{- \infty}^{\Psi(S_t) } x e^{-x^2/2} dx 
$$
where $\Psi(s) = \Phi^{-1}(s)$ is the inverse Gaussian cumulative distribution function. First, we calculate the derivatives of the function $q(\cdot)$:
\begin{equation} \label{derq1}
q'(s) = - \frac{1}{\sqrt{2 \pi}} \Psi'(s) \Psi(s) \exp(- \Psi(s)^2 / 2) =
\end{equation}
$$
- \frac{1}{\sqrt{2 \pi}} \frac{1}{\Phi'(\Psi(s))} \Psi(s) \exp(- \Psi(s)^2 / 2) =
$$
$$
- e^{\Psi(s)^2/s} e^{-\Psi(s)^2 / 2} \Psi(s) = - \Psi(s).
$$
Also
\begin{equation} \label{derq2}
q''(s) = - \Psi'(s) = - \frac{1}{\Phi'(\Psi(s))} = - \sqrt{2 \pi} e^{\Psi(s)^2/2}.
\end{equation}
Using these derivatives, It\^{o}'s formula now yields
$$
d q(S_t) = q'(S_t) d S_t + \frac{1}{2} q''(S_t) d [S]_t =
$$
$$
- \Psi(S_t) d S_t - (1-t)^{-2} \sqrt{\pi/2} e^{\Psi(S_t)^2/2} |V_t|^2 dt.
$$
Next, we observe the identity,
\begin{equation} \label{ibyparts}
- \Psi(x) = q(x)^{-1} \frac{1}{\sqrt{2 \pi}} \int_{-\infty}^{\Psi(x)} (s^2 - 1) e^{-s^2 / 2} ds.
\end{equation}
Indeed, by integration by parts
$$
\int_{-\infty}^{\Psi(x)} s^2 e^{-s^2 / 2} ds = - \Psi(x) e^{- \Psi(x)^2 / 2} + \int_{-\infty}^{\Psi(x)} e^{-s^2/2} ds =
$$
$$
- \Psi(x) e^{- \Psi(x)^2 / 2} + \sqrt{2 \pi} x,
$$
so
$$
\int_{-\infty}^{\Psi(x)} (s^2 - 1) e^{-s^2 / 2} ds = - \Psi(x) e^{- \Psi(x)^2 / 2}.
$$
Moreover,
\begin{equation} \label{q2}
q(x) = - \frac{1}{\sqrt{2 \pi}} \int_{-\infty}^{\Psi(x)} s e^{-s^2/2} ds =  \frac{1}{\sqrt{2 \pi}} e^{-\Psi(x)^2/2}.
\end{equation}
Combining these two equalities yields (\ref{ibyparts}). Plugging (\ref{ibyparts}) and (\ref{q2}) into the formula for $d q(S_t)$ above gives,
$$
d q(S_t) = q(S_t)^{-1} \left (\frac{1}{\sqrt{2 \pi}} \int_{-\infty}^{\Psi(S_t)} (s^2 - 1) e^{-s^2 / 2} ds \right ) d S_t - \frac{1}{2} (1-t)^{-2} q(S_t)^{-1} |V_t|^2 dt =
$$
$$
q(S_t)^{-1} \left ( \xi(S_t) d S_t - \frac{1}{2} (1-t)^{-2} |V_t|^2 dt \right )
$$
where
$$
\xi(x) = \frac{1}{\sqrt{2 \pi}} \int_{-\infty}^{\Psi(x)} (s^2 - 1) e^{-s^2 / 2} ds.
$$
We continue calculating
$$
d q^2(S_t) = 2 q(S_t) d q(S_t) + d [q(S)]_t =
$$
\begin{equation} \label{dqt2}
2 \xi(S_t) d S_t - (1-t)^{-2} |V_t|^2 dt + (1-t)^{-2} \xi(S_t)^2 \frac{|V_t|^2}{q(S_t)^2} dt.
\end{equation}

The reader may note the similarity between $\xi(S_t)$ and the matrix $B_t$ which encourages us to define
$$
\tilde B_t := \int_{H(A_t)} \left (x \otimes x - \mathrm{Id} \right ) d \gamma(x).
$$
It is straightforward to verify that for all $v \in \RR^n$ one has $\tilde B_t v = \xi(S_t) u_t \langle v, u_t \rangle$, which gives
$$
(1-t)^{-1/2} \left \langle \tilde B_t u_t, d W_t \right \rangle |U_t| = (1-t)^{-1/2} \xi(S_t) \langle U_t, d W_t \rangle = \xi(S_t) d S_t.
$$
Formula (\ref{dqt2}) becomes
$$
d \left (q(S_t)^2 \right ) =
$$
$$
2 (1-t)^{-1/2} \left \langle \tilde B_t u_t, d W_t \right \rangle |U_t| - (1-t)^{-1} |U_t|^2 dt + (1-t)^{-1} ||\tilde B_t||^2_{HS}  \frac{|U_t|^2}{q(S_t)^2} dt.
$$
Combining the last equation with (\ref{eps22}) and (\ref{eps23}) finally gives
\begin{equation} \label{finald}
d \epsilon_t = d \left ( q(S_t)^2 - |U_t|^2 \right ) =
\end{equation}
$$
2 (1-t)^{-1/2} |U_t| \left  \langle \left (\tilde B_t - B_t \right) u_t , d W_t \right \rangle - (1-t)^{-1} \Vert B_t \Vert^2_{HS} dt + (1-t)^{-1} \left \Vert \tilde B_t \right \Vert^2_{HS} \frac{|U_t|^2}{q(S_t)^2} dt.
$$
\subsection{Bounding the differential}
This subsection is dedicated to bounding the right hand side of equation (\ref{finald}) in terms of $\epsilon_t$ and $S_t$, thus proving Proposition \ref{mainsec4}. The proof of this bound will be carried out in three main lemmas, each of which uses a different idea. A glance at formula (\ref{finald}) shows that, in order for the differential of $\epsilon_t$ to be small (in the sense of both drift and quadratic variation) one should show that the matrices $B_t$ and $\tilde B_t$ are quite close to each other in a certain sense.

Recall that $\tilde B_t$ is a rank-one matrix of the form $\alpha u_t \otimes u_t$ for a constant $\alpha \in \RR$. We should therefore expect that the matrix $B$ is close to such a rank-one matrix.

Define $E = \mathrm{sp} \{u_t \}$ and let $P_E, P_{E^\perp}$ be the orthogonal projections onto $E$ and $E^{\perp}$ respectively. Our first lemma (Lemma \ref{estu} below) is of one-dimensional nature, and will provide a bound for $P_E (B - \tilde B) P_E$. Next, Lemma \ref{estup} will essentially be the only place in this note where the high dimension plays a role, and will give a bound for $\Vert  P_{E^\perp} B  P_{E^\perp} \Vert_{HS}$. Finally, in Lemma \ref{estoffdiag} which is of a two-dimensional nature, we give a bound for $\Vert  P_{E^\perp} B  P_{E} \Vert_{HS}$.

In all the proofs of this section, the time $t$ will be fixed, so the reader may consider the set $A_t$ as an arbitrary fixed measurable set. For convenience, we repeat a few definitions which will be used intensively in our proofs. First of all, recall that
$$
B_t = \int_{A_t} \left ( x \otimes x - \mathrm{Id} \right ) \gamma^n(x) dx
$$
and
$$
\tilde B_t = \int_{H(A_t)} \left ( x \otimes x - \mathrm{Id} \right ) d \gamma^n(x) = \frac{1}{\sqrt{2 \pi}} \left ( \int_{-\infty}^{\Psi(S_t)} (s^2 - 1) e^{-s^2 / 2} ds \right ) u_t \otimes u_t
$$
where
$$
S_t = \gamma^n(A_t) = \gamma^n (H(A_t))
$$
and
$$
u_t = \frac{\int_{H(A_t)} x d \gamma^n(x)}{\left |\int_{H(A_t)} x d \gamma^n(x) \right |.}
$$
Moreover,
\begin{equation} \label{recallq}
\eps_t = q(S_t)^2 - q(A_t)^2 = \left | \int_{H(A_t)} x d \gamma^n(x) \right |^2 - \left | \int_{A_t} x d \gamma^n(x) \right |^2.
\end{equation}
Finally, it will be useful to recall that
$$
H(A(t)) = \left \{x; ~~ \langle x, u_t \rangle \geq - \Psi(S_t) \right \}.
$$

\medskip
We begin with:

\begin{lemma} \label{estu}
For all $0 \leq t \leq 1$, one has
\begin{equation} \label{eqestu}
\left | \left \langle \left ( B_t - \tilde B_t \right ) u_t, u_t \right \rangle \right | \leq \frac{C}{S_t^2(1-S_t)^2} \epsilon_t \sqrt{\left |\log \epsilon_t \right | }
\end{equation}
for a universal constant $C>0$.
\end{lemma}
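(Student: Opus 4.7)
The plan is to reduce \eqref{eqestu} to a one-dimensional estimate by taking the pushforward under $x \mapsto \langle x, u_t\rangle$, and then to exploit the sign structure of the difference of marginal densities by combining an $L^\infty$ cutoff with a Gaussian tail estimate.

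Let $\mu$ denote the pushforward of the restriction of $\gamma^n$ to $A_t$ under the map $x \mapsto \langle x, u_t \rangle$, and write $m(x) = d\mu/d\gamma^1(x) \in [0,1]$. By the definition of $u_t$ and $H(A_t)$, the analogous pushforward of $\gamma^n|_{H(A_t)}$ has density $h(x) = \mathbf{1}_{x \ge \alpha_0}$ with $\alpha_0 = -\Psi(S_t)$, and $\int m\, d\gamma^1 = \int h\, d\gamma^1 = S_t$. Since the Gaussian measure is a product measure, a direct computation of the quadratic forms gives
$$
\left\langle \bigl(B_t - \tilde B_t\bigr) u_t, u_t\right\rangle \;=\; \int_{\RR}(x^2-1)\bigl(m(x)-h(x)\bigr)\, d\gamma^1(x).
$$
Using $\int(m-h)\,d\gamma^1 = 0$ and expanding $x^2 = (x-\alpha_0)^2 + 2\alpha_0(x-\alpha_0) + \alpha_0^2$, this rewrites as
$$
\int_{\RR} (x-\alpha_0)^2 (m-h)\, d\gamma^1 \;-\; 2\alpha_0\,\delta, \qquad \delta := q(S_t) - q(A_t),
$$
where the computation in the proof of Claim \ref{claim1} (via Lemma \ref{claim1alt}) identifies $\int x(h-m)\,d\gamma^1$ with $\delta$ and shows that $\delta \ge 0$.

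The main step is to bound $\int (x-\alpha_0)^2 |m-h|\,d\gamma^1$. The crucial observation is that $h(x)-m(x)$ has the same sign as $x-\alpha_0$ (since $h$ is the indicator of $\{x \ge \alpha_0\}$ and $m$ takes values in $[0,1]$), so $(x-\alpha_0)(h-m) = |x-\alpha_0|\,|h-m|$. For any $R > 0$ I would split the integral at $|x-\alpha_0| \le R$, bounding the near part by $R\int|x-\alpha_0|\,|h-m|\,d\gamma^1 = R\delta$, and bounding the far part by $\int_{|x-\alpha_0|>R}(x-\alpha_0)^2\,d\gamma^1$ via $|h-m|\le 1$. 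This Gaussian tail decays like a polynomial in $R$ and $|\alpha_0|$ times $e^{-(R-|\alpha_0|)^2/2}$ once $R \gtrsim |\alpha_0|$, so the balanced choice $R \asymp |\alpha_0| + \sqrt{|\log\delta|}$ yields
$$
\int (x-\alpha_0)^2 |m-h|\,d\gamma^1 \;\le\; C\,\delta\bigl(|\alpha_0| + \sqrt{|\log\delta|}\bigr)
$$
up to polylogarithmic factors in $|\alpha_0|$ and $|\log\delta|$.

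To conclude, I would translate $\delta$ and $\alpha_0$ back to the quantities appearing in \eqref{eqestu}: the factorization $\eps_t = \delta\,(q(S_t)+q(A_t))$ combined with $q(S_t) \ge c\,S_t(1-S_t)$ from Lemma \ref{lemestq} yields $\delta \le C\eps_t/(S_t(1-S_t))$, while $|\alpha_0| = |\Psi(S_t)| \le C\sqrt{|\log(S_t(1-S_t))|}$. Plugging these into the previous display and combining with the remaining $2|\alpha_0|\delta$ term produces an estimate of the form $\frac{C\eps_t}{S_t(1-S_t)}\sqrt{|\log\eps_t|}$ times polylogarithmic factors, and the slack between $1/(S_t(1-S_t))$ and $1/(S_t^2(1-S_t)^2)$ comfortably absorbs these. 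The main technical obstacle is the Gaussian tail estimate when $|\alpha_0|$ is large (that is, when $S_t$ is close to $0$ or $1$), since the optimal $R$ must then also grow; this is precisely what forces the $\sqrt{|\log\eps_t|}$ factor on the right-hand side of \eqref{eqestu} and must be handled so that all logarithmic losses fit under the stated weight.
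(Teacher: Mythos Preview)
Your proposal is correct and follows essentially the same approach as the paper's proof: reduce to one dimension via the pushforward along $u_t$, exploit the sign structure of $h-m$ relative to the threshold $\alpha_0 = -\Psi(S_t)$, split the quadratic integral at a cutoff of order $\sqrt{|\log\delta|}$, use the Gaussian tail beyond the cutoff and the first-moment bound $\int |x-\alpha_0|\,|h-m|\,d\gamma^1 = \delta$ below it, and finally convert $\delta$ back to $\eps_t$ via Lemma~\ref{lemestq}. The only cosmetic differences are that the paper first makes the WLOG reduction $A_t \leftrightarrow A_t^C$ to get a one-sided estimate (whereas you bound $\int(x-\alpha_0)^2|m-h|\,d\gamma^1$ directly, which is arguably cleaner), and the paper disposes of the $|\alpha_0|$ contribution up front by noting that one may assume $\delta \le S_t(1-S_t)$, forcing $|\alpha_0| \lesssim \sqrt{|\log\delta|}$, rather than carrying $|\alpha_0|$ separately and absorbing it with the extra factor of $S_t(1-S_t)$ in the denominator as you do.
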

\begin{proof}
Let $f:\RR \to [0,1]$ be the unique continuous function satisfying, for all measurable subsets $W \subset \RR$,
$$
\int_W f(x) d \gamma^1(x) = \int_{A_t} \mathbf{1}_{ \{ \langle x, u_t \rangle \in W  \}  }(x) d \gamma^n(x)
$$
hence $f$ is the density with respect to the Gaussian measure of the marginal on $\mathrm{sp}\{u_t\}$ of the standard Gaussian measure restricted to the set $A_t$, and similarly define $h(x)$ by
$$
\int_W h(x) d \gamma^1(x) = \int_{H(A_t)} \mathbf{1}_{ \{ \langle x, u_t \rangle \in W  \}  }(x) d \gamma^n(x).
$$
By definition, we get
\begin{equation} \label{intf}
\int_{\RR} f(x) d \gamma^1(x) = \int_{\RR} h(x) d \gamma^1(x) = S_t
\end{equation}
and
$$
\langle B_t u_t, u_t  \rangle = \int_{\RR} (x^2 - 1) f(x) d \gamma^1(x), ~~ \langle \tilde B_t u_t, u_t  \rangle = \int_{\RR} (x^2 - 1) h(x) d \gamma^1(x).
$$
Next, define
$$
g(x) = h(x) - f(x).
$$
Equation (\ref{intf}) teaches us that $\int g(x) d \gamma^1(x) = 0$, which gives
\begin{equation} \label{con0}
\left \langle \left ( B_t - \tilde B_t \right ) u_t, u_t \right \rangle = \int_{\RR} (x^2 - 1) g(x) d \gamma^1(x) = \int_{\RR} x^2 g(x) d \gamma^1(x).
\end{equation}
We claim that in order to complete the proof, it is enough to show that
\begin{equation} \label{enough111}
\int_{\RR} x^2 g(x) d \gamma^1(x) \leq \frac{C}{S_t^2(1-S_t)^2} \epsilon_t \sqrt{\left |\log \epsilon_t \right | }
\end{equation}
for a universal constant $C>0$. Indeed, observe that for all $B \subset \RR^n$,  one has $\eps(B) = \eps(B^C)$. Consequently, the right hand side of formula (\ref{eqestu}) remains invariant if we replace $A_t$ by $A_t^C$. Therefore, if the left hand side of the above equation is negative, we may replace $g(x)$ with $-g(-x)$ which corresponds to replacing $A_t$ by $A_t^C$ and continue as usual. \\

Denote $\delta = \int x g(x) d \gamma^1(x)$. We have
$$
\delta = \int_{H(A_t)} \langle x, u_t \rangle  d \gamma(x)  - \int_{A_t} \langle x, u_t \rangle d \gamma(x) = q(S_t) - q(A_t) = \frac{\epsilon_t}{q(S_t) + q(A_t)}.
$$
which, together with the bound (\ref{estq1}) and the fact that $q(s) < 1$ for all $0 < s < 1$ implies that
\begin{equation} \label{xgx}
\epsilon_t / 2 \leq  \delta \leq \epsilon_t / q(S_t) \leq \frac{C}{S_t(1-S_t)} \epsilon_t
\end{equation}
for a universal constant $C>0$. Define
$$
p = 100 \sqrt{ |\log \delta|}.
$$
The fact that $\int_\RR (x^2 + 1) d \gamma^1(x) < \infty$ implies that the left hand side of (\ref{eqestu}) is always smaller than a universal constant, therefore we remark that if $\eps_t \geq S_t^2(1-S_t)^2$ then this formula holds trivially and there is nothing to prove. Consequently, we may assume that $\delta \leq S_t(1-S_t)$. A well-known estimate about the Gaussian distribution is
$$
|\Psi(S_t)| \leq 10 \sqrt{ | \log((S_t)(1-S_t))| } \leq 10 \sqrt{|\log \delta|}.
$$
And therefore,
\begin{equation} \label{factp}
p \geq 10 |\Psi(S_t)|.
\end{equation}
Define $L = - \Psi(S_t)$ so that $h(x) = \mathbf{1}_{x \geq L}$.
Clearly, $g(x) \geq 0$ for $x > L$ and $g(x) \leq 0$ for $x < L$.
We have
\begin{equation} \label{f1}
\int_{\RR} x^2 g(x) d \gamma^1(x) = \int_{\RR} (x-L)^2 g(x) d \gamma^1(x) + 2 L \delta \leq
\end{equation}
$$
\int_L^\infty (x-L)^2 g(x) d \gamma^1(x) + 2 p \delta
$$
and also
\begin{equation} \label{leqdelta}
\int_{L}^\infty (x-L) g(x) d \gamma^1(x) \leq \delta.
\end{equation}
First, we estimate
$$
\int_p^\infty (x-L)^2 g(x) d \gamma^1(x) \leq \int_p^\infty (x-L)^2 d \gamma^1(x) \leq
$$
(according to equation (\ref{factp}))
$$
\int_p^\infty (x+p)^2 d \gamma^1(x) \leq 4 \int_p^\infty x^2 d \gamma^1(x) = \frac{4}{\sqrt{2 \pi}} \int_p^\infty x^2 e^{-x^2/2} dx =
$$
(integration by parts)
$$
4 p e^{-p^2/2} + 4 ( 1 -\Phi(p)) \leq 10 p e^{-p^2/2} \leq  \delta^2.
$$
(where we use the legitimate assumption that $\delta$ is smaller than some universal constant, justified above). On the other hand, using (\ref{factp}) and (\ref{leqdelta}), we have
$$
\int_L^p (x-L)^2 g(x) d \gamma^1(x) \leq (p-L) \int_L^p (x-L) g(x) d \gamma^1(x) \leq 2 p \delta.
$$
The last two equations with (\ref{f1}) give
$$
\int_{\RR} x^2 g(x) d \gamma^1(x) \leq 5 p \delta = 500 \delta \sqrt{ |\log \delta|}.
$$
Equation (\ref{xgx}) now tells us that
$$
\int_{\RR} x^2 g(x) d \gamma^1(x) \leq \frac{2C}{S_t(1-S_t)} \epsilon_t \sqrt{|\log \epsilon_t|}.
$$
Thus, equation (\ref{enough111}) holds and the proof is complete.
\end{proof}

Recall that we denote by $P_{E^\perp}$ the orthogonal projection onto $E^\perp=\mathrm{sp}\{u_t\}^\perp$. Next, we would like to prove
\begin{lemma} \label{estup}
For all $0 < t < 1$,
$$
\Vert P_{E^\perp} B_t P_{E^\perp} \Vert_{HS}^2 \leq \frac{C}{S_t^2(1-S_t)^2} \epsilon_t
$$
where $C>0$ is a universal constant.
\end{lemma}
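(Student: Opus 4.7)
My plan is to reduce $\|M\|_{HS}^2$ (with $M := P_{E^\perp} B_t P_{E^\perp}$) to a variance of the marginal mass function of $A_t$ in the direction $u_t$, and then control this variance via the strict concavity of the function $q(\cdot)$ appearing throughout the paper.

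\textbf{Hermite--marginal reduction.} Choose coordinates so that $u_t = e_1$, write $x = (y, z)$ with $z \in \Rnn$, and introduce
$$
\psi(z) := \gamma^1\bigl(\{y \in \RR : (y, z) \in A_t\}\bigr) - S_t.
$$
Integrating out the $y$-coordinate in $M_{ij} = \int_{A_t}(z_i z_j - \delta_{ij})\, d\gamma^n$ and using $\int \psi\, d\gamma^{n-1} = \gamma(A_t) - S_t = 0$ yields the identity $M_{ij} = \int (z_i z_j - \delta_{ij}) \psi\, d\gamma^{n-1}$. Expanding $z_i z_j$ in the orthonormal Hermite basis (so that $z_i^2 - 1 = \sqrt{2}\,\tilde H_{2 e_i}(z)$ and $z_i z_j = \tilde H_{e_i + e_j}(z)$ for $i \neq j$) and applying Parseval in $L^2(\gamma^{n-1})$,
$$
\|M\|_{HS}^2 \;=\; 2\,\|P_2\psi\|_{L^2(\gamma^{n-1})}^2 \;\leq\; 2\,\|\psi\|_{L^2(\gamma^{n-1})}^2,
$$
where $P_2$ denotes projection onto the level-$2$ Hermite subspace.

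\textbf{Quantitative Jensen bound on $\|\psi\|_2^2$.} Since $e_1 = u_t$ is the centroid direction of $A_t$, we have $q(A_t) = \int_{A_t} y\, d\gamma$. Applying Lemma~\ref{claim1alt} to the one-dimensional marginal at each fixed slice $z$,
$$
q(A_t) \;=\; \int_{\Rnn}\!\int_{\RR} y\,\mathbf{1}_{A_t}(y, z)\, d\gamma^1(y)\, d\gamma^{n-1}(z) \;\leq\; \int_{\Rnn} q\bigl(S_t + \psi(z)\bigr)\, d\gamma^{n-1}(z).
$$
By equation~\eqref{derq2}, $q''(s) = -\sqrt{2\pi}\, e^{\Psi(s)^2/2} \leq -\sqrt{2\pi}$ uniformly for $s \in (0,1)$, with equality at $s = 1/2$; that is, $q$ is $\sqrt{2\pi}$-strongly concave on $[0,1]$. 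Taylor's theorem with integral remainder then gives $q(S_t + \psi) \leq q(S_t) + q'(S_t)\psi - \frac{\sqrt{2\pi}}{2}\psi^2$; integrating against $\gamma^{n-1}$ and using $\EE\psi = 0$,
$$
q(S_t) - q(A_t) \;\geq\; q(S_t) - \EE\bigl[q(S_t + \psi(Z))\bigr] \;\geq\; \frac{\sqrt{2\pi}}{2}\|\psi\|_2^2.
$$

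\textbf{Conclusion.} Writing $\eps_t = (q(S_t) - q(A_t))(q(S_t) + q(A_t)) \geq q(S_t)\bigl(q(S_t) - q(A_t)\bigr)$ and applying the lower bound $q(S_t) \geq c\,S_t(1-S_t)$ from Lemma~\ref{lemestq},
$$
\|\psi\|_2^2 \;\leq\; \frac{2}{\sqrt{2\pi}}\cdot \frac{\eps_t}{q(S_t)} \;\leq\; \frac{C\,\eps_t}{S_t(1-S_t)} \;\leq\; \frac{C'\,\eps_t}{S_t^2(1-S_t)^2},
$$
so $\|M\|_{HS}^2 \leq 2\|\psi\|_2^2 \leq C''\eps_t/(S_t^2(1-S_t)^2)$, as claimed. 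The main technical point is the quantitative Jensen step: although $|q''|$ diverges at the endpoints, the uniform lower bound $|q''(s)| \geq \sqrt{2\pi}$ (attained at $s=1/2$) suffices, avoiding any log factors and producing a dimension-free estimate. I expect the only place the high dimension plays a role is in the reduction to $\|\psi\|_2^2$, where all cross-basis Hermite coefficients in $E^\perp$ are simultaneously controlled by a single one-dimensional marginal quantity.
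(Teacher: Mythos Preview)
Your proof is correct and takes a genuinely different, more elementary route than the paper. The paper bounds the entropy of the marginal density $f/S_t$ via the concavity of $q$ (Lemma~\ref{entropy}, whose proof in fact passes through the same variance bound $\int (f-S_t)^2\,d\gamma^{n-1}$ that you use directly), then invokes Talagrand's transportation--entropy inequality to produce a map $T$ with small $L^2$ cost, and finally expresses $P_{E^\perp}B_tP_{E^\perp}$ as $S_t(\mathrm{Cov}(T(X))-\mathrm{Cov}(X))$ and bounds the Hilbert--Schmidt norm term by term. You bypass the entropy and transport steps entirely: the Hermite/Parseval identity $\|M\|_{HS}^2 = 2\|P_2\psi\|_2^2 \leq 2\|\psi\|_2^2$ gives the covariance bound directly from the variance of the marginal, and then the uniform strong concavity $q'' \leq -\sqrt{2\pi}$ (the upper bound in Fact~\ref{secondder}) together with the slice-wise application of Lemma~\ref{claim1alt} controls $\|\psi\|_2^2$ by $\eps_t/q(S_t)$.

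Your approach is shorter, avoids the external input of Talagrand's inequality, and does not need the symmetry reduction to $S_t \geq 1/2$; it also actually yields the sharper bound $\|M\|_{HS}^2 \leq C\eps_t/(S_t(1-S_t))$, which you then weaken to match the stated lemma. The paper's route, on the other hand, produces as a byproduct an entropy bound and a transport map, which could in principle be useful for other purposes (though they are not exploited elsewhere in the paper). Both arguments ultimately rest on the same two ingredients: the slice-wise centroid inequality and the quantitative concavity of $q$.
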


Before we prove this lemma, we first need

\begin{lemma} \label{entropy}
Let $0 < h \leq 1$ and let $f: \Rnn \to [0,1]$ be such that
$$
\int_{\Rnn} f(x) d \gamma^{n-1}(x) = h.
$$
Then
$$
\int_{\Rnn} \frac{f(x)}{h} \log \left (\frac{f(x)}{h}  \right ) d \gamma^{n-1}(x) \leq h^{-2} \int_{\Rnn} \left (q(h) - q(f(x)) \right ) d \gamma^{n-1}(x).
$$
\end{lemma}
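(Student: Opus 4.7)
The plan is to reduce the stated inequality to a pointwise claim via Taylor expansion of $I := q$ combined with a Bregman-type rewriting of the entropy, and then to establish the pointwise claim by analyzing an auxiliary function.

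Using the identities $I'(s) = -\Psi(s)$ and $I''(u) = -1/I(u)$ from equations \eqref{derq1} and \eqref{derq2}, Taylor's theorem with integral remainder gives
$$
I(h) - I(y) \;=\; \Psi(h)(y-h) \;+\; \int_h^y \frac{y-u}{I(u)}\, du.
$$
Integrating this identity against $d\gamma^{n-1}$ annihilates the linear-in-$(f-h)$ term by the mean constraint $\int f\, d\gamma^{n-1} = h$. Using the same constraint, the entropy on the left can be rewritten as $h^{-1}\int \phi_h(f)\, d\gamma^{n-1}$, where $\phi_h(y) := y\log(y/h) - (y-h) \geq 0$ is the nonnegative Bregman divergence of $y\log y - y$ at the point $h$. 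The lemma thus reduces to the pointwise inequality
$$
h\, \phi_h(y) \;\leq\; \int_h^y \frac{y-u}{I(u)}\, du \qquad \text{for every } y \in [0,1].
$$

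To establish this, introduce the auxiliary function $G(y) := \int_h^y (y-u)/I(u)\, du - h\, \phi_h(y)$, which satisfies $G(h) = 0$, $G'(h) = 0$, and
$$
G''(y) \;=\; \frac{1}{I(y)} - \frac{h}{y}.
$$
For $y \geq h$, the uniform bound $I(y) \leq I(1/2) = 1/\sqrt{2\pi} < 1 \leq y/h$ yields $G''(y) \geq 0$, so integrating twice from $h$ gives $G(y) \geq 0$ on $[h,1]$ immediately.

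For $y \in [0,h)$ the analysis is more delicate, since $I(y) \sim y\sqrt{2\log(1/y)}$ eventually exceeds $y/h$ as $y \to 0^+$, making $G''$ negative there, so elementary convexity fails. The idea is to exploit both boundary endpoints: the change of variables $u = \Phi(s)$ yields the explicit formula $G(0) = I(h) + h\Psi(h) - h^2$, and the Mills-ratio expansion $\Phi(x)/\varphi(x) = |x|^{-1} - |x|^{-3} + O(|x|^{-5})$ applied at $x = \Psi(h)$ gives
$$
I(h) + h\Psi(h) \;=\; \frac{I(h)}{\Psi(h)^2} + O\!\left(\frac{I(h)}{\Psi(h)^4}\right),
$$
which dominates $h^2$ for every $h \in (0,1]$. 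Combined with $G'(y) \to +\infty$ as $y \to 0^+$ (from the same Mills asymptotics, since the divergent $h\log(h/y)$ term dominates $\int_y^h du/I(u)$) and the fact that the equation $y = h I(y)$ has at most one root in $(0,h)$, so $G''$ changes sign at most once on that interval, a monotonicity argument forces $G \geq 0$ throughout $[0,h]$. The principal obstacle is precisely this final step, where the absence of convexity must be overcome by carefully tracking the Gaussian-isoperimetric-function asymptotics near both endpoints rather than by any elementary bound.
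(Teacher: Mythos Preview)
Your reduction to the pointwise inequality $h\,\phi_h(y)\le\int_h^y(y-u)/I(u)\,du$ is correct and is in fact exactly the content of the lemma. Your treatment of the range $y\ge h$ via $G''\ge 0$ is also fine. However, the route you take for $y\in[0,h)$ is both more complicated than necessary and contains gaps.

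First, the gaps. You assert that the Mills-ratio expansion shows $I(h)+h\Psi(h)\ge h^2$ for \emph{every} $h\in(0,1]$, but an asymptotic expansion only gives this for $h$ near $0$ (and by symmetry near $1$); you have not handled intermediate $h$. It is in fact true---writing $x=\Psi(h)$ one checks that $J(x):=\varphi(x)+x\Phi(x)-\Phi(x)^2$ has $J'(x)=\Phi(x)(1-2\varphi(x))\ge 0$ and $J(-\infty)=0$---but this calculation is missing. Similarly, the claim that $y=hI(y)$ has at most one root requires the monotonicity of $y/I(y)$, which is the statement that $q(s)/s$ is decreasing (part of Lemma~\ref{lemestq}); you should cite or prove this.

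More importantly, the entire endpoint-and-sign-change analysis on $[0,h)$ is unnecessary. The paper's proof is a two-line sandwich through the variance $(y-h)^2$. On one side, $q''\le q''(1/2)<-2$ uniformly (Fact~\ref{secondder}), so the Taylor remainder satisfies $\int_h^y(y-u)/I(u)\,du\ge(y-h)^2$ for \emph{all} $y\in[0,1]$. On the other side, the elementary inequality $s\log s-(s-1)\le(s-1)^2$ for $s\ge 0$ gives $\phi_h(y)\le(y-h)^2/h$, hence $h\phi_h(y)\le(y-h)^2$. Chaining the two yields the pointwise inequality directly, with no case distinction and no asymptotics. Your remark that ``the absence of convexity must be overcome\ldots rather than by any elementary bound'' is thus mistaken: the uniform bound $1/I\ge 2$ is precisely the elementary substitute for convexity of $G$, and it works on the whole interval.
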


For this lemma we will need the following fact, whose simple yet technical proof is postponed to the appendix.
\begin{fact} \label{secondder}
For all $0 \leq h,s \leq 1$,
$$
- \frac{4}{h^2(1-h)^2} (s-h)^2 \leq q(s) - q(h) - q'(h) (s-h) \leq - (s-h)^2.
$$
where $C>0$ is a universal constant.
\end{fact}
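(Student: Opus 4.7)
My plan is to derive both inequalities from the integral form of the Taylor remainder, combined with the identities $q'(s) = -\Psi(s)$ and $q''(s) = -1/q(s)$ already established in equations \eqref{derq1}--\eqref{derq2}. Writing $R(s) := q(s) - q(h) - q'(h)(s-h)$, one has $R(h) = R'(h) = 0$, and
$$
R(s) = \int_h^s (s-u)\, q''(u)\, du = -\int_h^s \frac{s-u}{q(u)}\, du
$$
for $s > h$ (with the analogous symmetric formula for $s < h$). Both inequalities in the statement then reduce to pointwise estimates on $1/q(u)$ over the interval between $h$ and $s$.

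For the upper bound $R(s) \leq -(s-h)^2$ I would use that $q$ is symmetric about $1/2$ and maximized there, with $q(1/2) = 1/\sqrt{2\pi}$. Hence $1/q(u) \geq \sqrt{2\pi} > 2$ for every $u \in [0,1]$, and the integral above is at least $2\int_h^s (s-u)\, du = (s-h)^2$, yielding the claim at once.

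The substantive part is the lower bound. I would invoke Lemma \ref{lemestq}, which gives $q(u) \geq c\, u(1-u)$ for a universal $c > 0$, to obtain
$$
-R(s) \leq \frac{1}{c} \int_h^s \frac{s-u}{u(1-u)}\, du.
$$
The key algebraic observation is that via the partial-fraction decomposition $1/(u(1-u)) = 1/u + 1/(1-u)$ and direct integration, the right-hand integral equals exactly the binary Kullback--Leibler divergence
$$
D(s \| h) \;=\; s\log(s/h) + (1-s)\log\!\left(\frac{1-s}{1-h}\right).
$$
Applying the elementary inequality $\log(1+x) \leq x$ to each of the two logarithmic terms then produces the standard reverse-Pinsker-type estimate $D(s\|h) \leq (s-h)^2/(h(1-h))$, and combining this with the trivial $h(1-h) \leq 1/4$ (so that $1/(h(1-h)) \leq 1/(4h^2(1-h)^2)$) yields $-R(s) \leq (s-h)^2/(4c\, h^2(1-h)^2)$. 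The explicit factor $4$ in the statement then follows from the fact that $q(s)/(s(1-s))$ is symmetric about $1/2$ and blows up at the endpoints, hence is minimized at $s = 1/2$ with value $4/\sqrt{2\pi} > 1/16$, so the universal constants align comfortably. The only delicate point I anticipate is the clean identification of the integral with $D(s\|h)$; without it one is stuck with rougher worst-case bounds on $1/q(u)$ that fail to scale correctly when $s$ is far from $h$, but once this identification is in place the rest is routine.
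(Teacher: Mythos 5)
Your argument is correct and takes a genuinely different route from the paper's. For the upper bound the two proofs are essentially the same: observe $q''(u) \leq q''(1/2) = -\sqrt{2\pi} < -2$ and integrate the Taylor remainder. The lower bound is where the approaches diverge. The paper works geometrically, introducing a comparison parabola $g(s) = h^{-2} f(0)(s-h)^2$, using the convexity of $q'$ on $[0,1/2]$ together with the symmetry of $q$ about $1/2$ to argue $f \geq g$ on all of $[0,1]$, and then separately bounding $f(0) = hq'(h) - q(h)$. You instead exploit the closed-form identity $q''(s) = -1/q(s)$ (which, to be precise, follows from combining (\ref{derq2}) with (\ref{q2}), not from (\ref{derq1})--(\ref{derq2}) alone as you wrote), plug in $q(u) \geq c\,u(1-u)$ from Lemma \ref{lemestq}, and then compute the resulting integral $\int_h^s \frac{s-u}{u(1-u)}\,du$ exactly as the binary Kullback--Leibler divergence $D(s\|h)$, which is at most $(s-h)^2/(h(1-h))$ by the standard reverse-Pinsker estimate via $\log(1+x)\leq x$. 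Your route trades the paper's delicate sign-and-convexity comparison for a clean exact integral and a familiar information-theoretic inequality, and is arguably the more transparent of the two. Two small cautions on bookkeeping: first, to obtain the explicit constant $4$ you need $c \geq 1/16$, which the \emph{statement} of Lemma \ref{lemestq} does not give you (it only asserts an unspecified universal $c$), so you should point to its appendix proof, which exhibits $c = 4/\sqrt{2\pi}$; second, the heuristic that ``$q(s)/(s(1-s))$ is symmetric and blows up at the endpoints, hence is minimized at $1/2$'' is not a proof by itself (symmetry plus boundary blow-up does not preclude a local maximum at the center), though again the explicit bound from the proof of Lemma \ref{lemestq} already supplies what you need.
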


\begin{proof} [\textbf{Proof of Lemma \ref{entropy}}]
An application of Fact \ref{secondder} gives
\begin{equation} \label{onehand}
\int_{\Rnn} \left (q(h) - q(f(x)) \right ) d \gamma^{n-1}(x) = \int_{\Rnn}  \left (q(h) - q(f(x)) + q'(h) (f(x) - h) \right ) d \gamma(x) \geq
\end{equation}
$$
\int_{\Rnn} (f(x)-h)^2 d \gamma^{n-1}(x) = h^2 \int_{\Rnn} \left (\dfrac{f(x)}{h} - 1 \right )^2 d \gamma^{n-1}(x).
$$
On the other hand, it is easy to check that one has,
$$
s \log s - (s-1) \leq (s-1)^2
$$
for all $s \geq 0$. Consequently,
\begin{equation} \label{otherhand}
\int_{\Rnn} \frac{f(x)}{h} \log \left ( \frac{f(x)}{h} \right ) d \gamma^{n-1}(x) =
\int_{\Rnn} \left (\frac{f(x)}{h} \log \left ( \frac{f(x)}{h} \right ) - \left (\frac{f(x)}{h} - 1 \right ) \right ) d \gamma^{n-1}(x) \leq
\end{equation}
$$
\int_{\Rnn} \left (\dfrac{f(x)}{h} - 1 \right )^2 d \gamma^{n-1}(x).
$$
A combination of (\ref{onehand}) and (\ref{otherhand}) finishes the proof.
\end{proof}

\begin{proof} [\textbf{Proof of Lemma \ref{estup}}]
We begin with the observation that
$$
B_t = \int_{A_t} \left ( x \otimes x - \mathrm{Id} \right ) d \gamma^1(x) = - \int_{A_t^C} \left ( x \otimes x - \mathrm{Id} \right ) d \gamma^1(x).
$$
Moreover $q(A_t) = q(A_t^C)$. Thanks to this, the statement of the lemma remains invariant when replacing that set $A_t$ with the set $A_t^C$. Consequently, it is legitimate to make the assumption
\begin{equation} \label{symasum}
S_t \geq \frac{1}{2}.
\end{equation}
Let $\mu$ be the measure $\gamma^n \big \vert_{A_t}$, hence the Gaussian measure restricted to the set $A_t$. Define by $\tilde \mu$ and $\tilde \gamma$ the push-forward under $P_{E^\perp}$ of the measures $\mu$ and $\gamma^n$ respectively. Define the function $f:E^\perp \to \RR$ by,
$$
f(y) = \frac{1}{\sqrt{2 \pi}} \int_{(y + E) \cap A_t} \exp \left (- \langle x, u_t \rangle^2 / 2 \right ) dx
$$
(here $dx$ stands for the $1$-dimensional Hausdorff measure on $y+E$). One can verify that for $y \in E^\perp$, this function satisfies
$$
\frac{d \tilde \mu}{d \tilde \gamma} (y) = f(y).
$$
Thus $\int_{E^\perp} f(y) d \tilde \gamma(y) = \gamma(A_t) = S_t$. An application of Lemma \ref{entropy} and of equation (\ref{symasum}) now gives
\begin{equation} \label{entropyappl}
\int_{E^\perp} \frac{f(y)}{S_t} \log \left ( \frac{f(y)}{S_t} \right ) d \tilde \gamma(y) \leq 4 \left ( q(S_t) - \int_{E^\perp} q(f(y)) d \tilde \gamma(y) \right ).
\end{equation}
Now, it follows from Claim \ref{claim1} that for all $y \in E^\perp$,
$$
 \frac{1}{\sqrt{2 \pi}} \left | \int_{(y + E) \cap A_t} \langle x, u_t \rangle  \exp(- \langle x, u_t \rangle^2 / 2 ) dx \right | \leq
 $$
 $$
 q \left (  \frac{1}{\sqrt{2 \pi}}  \int_{(y + E) \cap A_t} \exp(- \langle x, u_t \rangle^2 / 2 )  dx  \right ) = q(f(y)).
$$
Integrating this inequality over $E^\perp$ with respect to $\tilde \gamma$ and using equation (\ref{recallq}) gives
$$
\int_{E^\perp} q(f(y)) d \tilde \gamma(y) \geq q(A_t).
$$
Combining this with equation (\ref{entropyappl}) gives
\begin{equation}
\int_{E^\perp} \frac{f(y)}{S_t} \log \left ( \frac{f(y)}{S_t} \right ) d \tilde \gamma(y) \leq 4 ( q(S_t) - q(A_t)) \leq \frac{4}{q(S_t)} \epsilon_t \leq \frac{C}{S_t(1-S_t)} \epsilon_t
\end{equation}
for a universal constant $C>0$ (in the last inequality we used formula (\ref{estq1})).
The above equation allows us to use Talagrand's transportation-entropy inequality  (\cite{T}) which teaches us that there exists a function $T: \EE^{\perp} \to \EE^\perp$ which pushes forward the measure $\tilde \gamma$ to the measure $S_t^{-1} \tilde \mu$ and such that,
$$
\int_{E^\perp} |T(y) - y|^2 d \tilde \gamma(y) \leq \frac{2 C}{S_t(1-S_t)} \epsilon_t.
$$
Denote $D = P_{E^\perp} B_t P_{E^\perp}$. Let $X$ be a random vector whose law is $\tilde \gamma$, then by definition
$$
D = S_t (Cov(T(X)) - Cov(X))
$$
where $Cov(Y) := \EE[ (Y - \EE[Y]) \otimes (Y - \EE[Y])]$ is the covariance matrix of a vector $Y$ (here, we use the fact that $\int_{A_t} P_{E^\perp} x d \gamma(x) = 0$). Let $e_1,...,e_{n-1}$ be an orthogonal basis of $E^\perp$ which diagonalizes $D$. We have, for all $1 \leq i \leq n-1$,
$$
Var \left (\langle X + T(X) , e_i \rangle \right  ) \leq 8 Var( \langle X, e_i \rangle ) + 2 Var \left (\langle X - T(X) , e_i \rangle \right  ) \leq
$$
$$
8 + 2 \frac{C}{S_t(1-S_t)} \epsilon_t \leq \frac{C'}{S_t(1-S_t)}
$$
for a universal constant $C'>0$. We calculate,
$$
\Vert D \Vert_{HS}^2 = S_t^2 \sum_{i=1}^n \left (Var( \langle T(X), e_i \rangle ) - Var( \langle X, e_i \rangle ) \right )^2 =
$$
$$
S_t^2 \sum_{i=1}^n Cov( \langle T(X) - X, e_i \rangle , \langle T(X) + X , e_i \rangle  )^2 \leq
$$
$$
S_t^2 \sum_{i=1}^n Var( \langle T(X) - X, e_i \rangle) Var( \langle T(X) + X , e_i \rangle  ) \leq
$$
$$
\frac{C}{S_t(1-S_t)} \EE | T(X) - X |^2 \leq \frac{C}{S_t^2(1-S_t)^2} \epsilon_t,
$$
and we are done.
\end{proof}

Our last lemma concerns with the part of the matrix $B$ which is "off-diagonal" with respect to $E,E^\perp$.

\begin{lemma} \label{estoffdiag}
For any $0 \leq t < 1$ and for any unit vector $v$ with $v \perp u_t$, one has
$$
\left | \langle v, B_t u_t \rangle \right | \leq \frac{C}{S_t^3(1 - S_t)^3} \epsilon_t \sqrt{|\log \epsilon_t| }
$$
where $C>0$ is a universal constant.
\end{lemma}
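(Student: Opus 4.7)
The proof will reduce to a two-dimensional estimate about the marginal of $\gamma^n|_{A_t}$ on the plane $E' = \mathrm{sp}\{u_t, v\}$. Let $f: \RR^2 \to [0,1]$ denote this marginal density with respect to $\gamma^2$ (so $f(y) = \int \mathbf{1}_{A_t}(y+z)\,d\gamma^{n-2}(z)$, which lies in $[0,1]$ a.e.), with coordinates $(y_1, y_2)$ aligned along $u_t, v$. A direct computation using Fubini gives $\int f\,d\gamma^2 = S_t$, $\int y_1 f\,d\gamma^2 = q(A_t)$, $\int y_2 f\,d\gamma^2 = 0$ (since $u_t$ is the centroid direction of $A_t$), and crucially
$$\langle v, B_t u_t\rangle = \int_{A_t}\langle x, u_t\rangle \langle x, v\rangle\,d\gamma(x) = \int y_1 y_2 f(y)\,d\gamma^2(y).$$
So the estimate is purely a two-dimensional problem about $f$.

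Next, I introduce the half-plane indicator $h(y_1, y_2) = \mathbf{1}_{y_1 \geq L}$ with $L = -\Psi(S_t)$; this is the image of $H(A_t)$ under projection to $E'$, and $\int h\,d\gamma^2 = S_t$. Since $h$ is even in $y_2$, one has $\int y_1 y_2 h\,d\gamma^2 = 0$, so letting $g := h - f$ we get $\langle v, B_t u_t\rangle = -\int y_1 y_2 g\,d\gamma^2$. The function $g$ has the essential pointwise sign property $(y_1 - L)\,g(y_1, y_2) \geq 0$ (since $g = 1 - f \geq 0$ when $y_1 \geq L$ and $g = -f \leq 0$ when $y_1 < L$), together with $|g| \leq 1$ and the moment identities $\int g = 0$, $\int y_2 g = 0$, and $\int y_1 g\,d\gamma^2 = q(S_t) - q(A_t) =: \delta$. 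Using $\int y_2 g = 0$ to replace $y_1$ by $y_1 - L$ and then the sign property:
$$|\langle v, B_t u_t\rangle| = \Bigl|\int (y_1 - L)\, y_2\, g\,d\gamma^2\Bigr| \leq \int (y_1 - L)\, g(y_1, y_2)\, |y_2|\,d\gamma^2.$$

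The main step is a truncation in $y_2$. For any threshold $M > 0$, the bulk portion is controlled by $M\delta$ using $|y_2| \leq M$ and the identity $\int (y_1 - L) g\,d\gamma^2 = \delta$, while the tail portion is bounded via $(y_1 - L)g \leq |y_1 - L|$ and the product structure of $\gamma^2$:
$$\int_{|y_2| > M}(y_1 - L)\, g\, |y_2|\,d\gamma^2 \leq \Bigl(\int |y_1 - L|\,d\gamma^1\Bigr)\Bigl(\int_{|y_2|>M}|y_2|\,d\gamma^1\Bigr) \leq C\sqrt{1 + L^2}\,e^{-M^2/2}.$$
Optimizing at $M \asymp \sqrt{|\log\delta|}$ gives $|\langle v, B_t u_t\rangle| \leq C\delta\sqrt{|\log\delta|}\sqrt{1 + L^2}$.

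Finally, (\ref{estq1}) yields $q(S_t) \geq cS_t(1-S_t)$, hence $\delta = \epsilon_t/(q(S_t)+q(A_t)) \leq C\epsilon_t/(S_t(1-S_t))$, and the standard estimate $|\Psi(S_t)| \leq C\sqrt{|\log(S_t(1-S_t))|}$ handles $\sqrt{1+L^2}$. After disposing of the trivial regime $\epsilon_t \geq (S_t(1-S_t))^2$ (where the left-hand side is bounded by an absolute constant, so the stated inequality holds automatically), absorbing $\sqrt{1+L^2}$ and the logarithmic factor into the powers of $S_t(1-S_t)$ yields the claimed bound. The hardest point in the argument is that a naive Cauchy–Schwarz loses a factor of $\sqrt{\epsilon_t}$ because $\int y_2^2 |g|\,d\gamma^2$ is of order $1$, not small; the truncation at a logarithmic scale in $|y_2|$ is precisely what recovers the optimal linear dependence $\epsilon_t$, with only the $\sqrt{|\log\epsilon_t|}$ correction.
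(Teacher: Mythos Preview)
Your proof is correct and takes a genuinely different, more elementary route than the paper. Both arguments begin by reducing to the two-dimensional marginal on $\mathrm{sp}\{u_t,v\}$ and expressing $\langle v, B_t u_t\rangle$ as $\int y_1 y_2 f\,d\gamma^2$. From there the approaches diverge. The paper works with the one-dimensional slices $Q(y)=\int x f(x,y)\,d\gamma^1(x)$ and $g(y)=\int f(x,y)\,d\gamma^1(x)$, invokes the pointwise bound $|Q(y)|\le q(g(y))$, and then uses the second-order Taylor estimate for $q(\cdot)$ (Fact~\ref{secondder}) to control $\int (g(y)-S_t)^2\,d\gamma^1$ by $\epsilon_t/q(S_t)$; the final bound comes from splitting $\int yQ$ into $\int y(Q-q(g))+\int y\,q(g)$ and applying Lemma~\ref{claim1alt} to each piece. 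You instead exploit directly the sign structure $(y_1-L)(h-f)\ge 0$ in the full plane, reduce to $\int (y_1-L)g\,|y_2|\,d\gamma^2$, and truncate in $|y_2|$ at scale $\sqrt{|\log\delta|}$, bounding the bulk by $M\delta$ via the moment identity and the tail by $|y_1-L|$ times a Gaussian tail.

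Your argument is shorter, avoids Fact~\ref{secondder} and the repeated use of Lemma~\ref{claim1alt}, and in fact yields the sharper denominator $S_t^2(1-S_t)^2$ rather than $S_t^3(1-S_t)^3$. The paper's route, while more involved here, is consistent with the machinery it develops for Lemmas~\ref{estu} and~\ref{estup} (where the function $q(\cdot)$ and its concavity are the natural tools); your truncation idea is closer in spirit to the paper's proof of Lemma~\ref{estu}, transplanted to the orthogonal variable.
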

\begin{proof}
Let $\mu$ be the measure whose density is $\gamma^n \big \vert_{A_t}$. Denote $F = \mathrm{sp} \{v, u_t \}$, and let $\tilde \mu$ and $\tilde \gamma$ be the push-forward of $\mu$ and $\gamma$ under the orthogonal projection onto $F$ respectively. Define,
$$
f(x,y) = \frac{d \tilde \mu}{d \tilde \gamma} (x u_t + y v), ~~~ \forall (x,y) \in \RR^2.
$$
By definition, we have
\begin{equation} \label{havetobound}
\langle v, B_t u_t \rangle = \int_{A_t} \langle x, v \rangle \langle x, u_t \rangle d \gamma(x) = \int_{\RR^2} x y f(x,y) d \gamma^2(x,y).
\end{equation}
so our objective is to bound the right hand side of the above equation. For every $y \in \RR$, we write
$$
g(y) = \int_{\RR} f(x,y) d \gamma^1(x),
$$
the density of the marginal of $f(x,y)$ onto the $y$ coordinate with respect to the Gaussian measure, and
$$
Q(y) = \int_{\RR} x f(x,y) d \gamma^1(x).
$$
Equation (\ref{havetobound}) becomes
\begin{equation} \label{havetobound2}
\langle v, B_t u_t \rangle = \int_{\RR} y Q(y) d \gamma^1(y).
\end{equation}
By equation (\ref{recallq}), we know that
$$
q(A_t) = \left | \int_{A_t} \langle x, u_t \rangle d \gamma(x) \right | = \left | \int_{\RR^2} x f(x,y) d \gamma^2(x,y) \right | = \left | \int_{\RR} Q(y) d \gamma^1(y) \right |.
$$
So, by the definition of $\epsilon_t$,
\begin{equation} \label{Qsmall}
q^2(S_t) - \left | \int_{\RR} Q(y) d \gamma^1(y) \right |^2 = \epsilon_t.
\end{equation}
Since $|f(x,y)| \leq 1$ for all $(x,y) \in \RR^2$ and by Lemma \ref{claim1alt}, we have
\begin{equation} \label{Qsmall2}
|Q(y)| \leq q(g(y)), ~ \forall y \in \RR.
\end{equation}
The two above equations give
$$
q^2(S_t) - \left ( \int_{\RR} q(g(y)) d \gamma^1(y) \right )^2 \leq \epsilon_t,
$$
and therefore
$$
q(S_t) - \int_{\RR} q(g(y)) d \gamma^1(y) \leq \epsilon_t / q(S_t).
$$
Next, observe that $\int_{\RR} g(y) d \gamma = S_t$. Using Fact \ref{secondder}, this yields
\begin{equation}
q(S_t) - \int_{\RR} q(g(y)) d \gamma^1(y) = - \int_{\RR} (q(g(y)) - q(S_t) - q'(S_t) (g(y) - S_t) ) d \gamma^1(y) \geq
\end{equation}
$$
\int_{\RR} (g(y) - S_t)^2 d \gamma^1(y),
$$
so
\begin{equation} \label{quadleqe}
\int_{\RR} (g(y) - S_t)^2 d \gamma^1(y) \leq \epsilon_t / q(S_t).
\end{equation}
Next, by definition of the vector $u_t$, we know that the center of mass of $\tilde \mu$ is orthogonal to $v$, which implies that $\int_{\RR} y g(y) d \gamma^1(y) = 0$. This gives,
\begin{equation}
\int_{\RR} y q(g(y)) d \gamma^1(y) = \int_{\RR} y (q(g(y)) - q(S_t) - q'(S_t)(g(y) - S_t)) d \gamma^1(y)
\end{equation}
and, by Fact \ref{secondder}
\begin{equation} \label{qsep}
\left | \int_{\RR} y q(g(y)) d \gamma^1(y) \right | \leq \frac{4}{S_t^2(1-S_t)^2} \int_{\RR} |y| (g(y) - S_t)^2 d \gamma^1(y).
\end{equation}
We claim that the last equation combined with (\ref{quadleqe}) gives,
\begin{equation} \label{qse}
\left | \int_{\RR} y q(g(y)) d \gamma^1(y) \right | \leq \frac{4}{S_t^2(1-S_t)^2} q(\epsilon_t) / q(S_t).
\end{equation}
Indeed, observe that for all $y$, the quantity $(g(y) - S_t)^2$ is smaller than $1$. We invoke Lemma \ref{claim1alt} with $m(y) = (g(y) - S_t)^2$ and use the bound (\ref{quadleqe}) to get
$$
\int_{\RR} |y| (g(y) - S_t)^2 d \gamma^1(y) =
\int_0^\infty y (g(y) - S_t)^2 d \gamma^1(y) -
\int_{-\infty}^0 y (g(y) - S_t)^2 d \gamma^1(y) \leq
$$
$$
 2 \int_{-\infty}^{\Psi(\epsilon_t / q(S_t))} |y| d \gamma^1(y) = 2 q(\epsilon_t / q(S_t)).
$$
In the last equality, we have used the legitimate assumption that $\epsilon_t < \frac{1}{2}$. Equation (\ref{qse}) now follows from the sub-linearity of $q(\cdot)$ suggested by equation (\ref{derq1}).\\
Next, another application of Claim \ref{claim1} on the set
$$
\{(x,y); ~~ x \leq \Psi(g(y)) \}
$$
teaches us that
$$
q(S_t) - \int_{\RR} q(g(y)) d \gamma^1(y) \geq 0.
$$
Combining this fact with (\ref{Qsmall}) suggests that
\begin{equation} \label{beforeeps}
0 \leq \left ( \int_{\RR} q(g(y)) d \gamma^1(y) \right )^2 - \left ( \int_{\RR} Q(y) d \gamma^1(y) \right )^2 \leq \epsilon_t.
\end{equation}
Next, we note that the assumption
\begin{equation} \label{epssmall1}
\epsilon_t \leq q(S_t) / 2
\end{equation}
is a legitimate one. Indeed, one has $\int_{\RR^2} |x y| d \gamma^2(x,y) < \infty $ which, thanks to equation (\ref{havetobound}) teaches us that
$$
\langle v, B_t u_t \rangle \leq C_1
$$
for some universal constant $C_1>0$. The estimate (\ref{estq1}) ensures us that if $\epsilon_t \geq q(S_t) / 2$ then the quantity $\frac{q(\epsilon_t)}{S_t(1-S_t)}$ is larger than a universal constant, which would imply the result of the lemma, so the assumption can be made. Using assumption (\ref{epssmall1}) with equation (\ref{beforeeps}) yields
$$
\int_{\RR} \left ( q(g(y)) - Q(y) \right ) \ d \gamma^1(y) \leq \frac{\epsilon_t}{\int_{\RR}  q(g(y)) d \gamma^1(y)} \leq 2 \epsilon_t / q(S_t).
$$
According to $(\ref{Qsmall2})$ and since $q(s) \leq q(1/2)$ for $0 < s < 1$, we have $0 \leq q(g(y)) - Q(y) \leq 1$. Thus, in a similar way that (\ref{qsep}) implied (\ref{qse}), the above equation implies
$$
\left | \int_{\RR} y (q(g(y)) - Q(y)) d \gamma^1(y) \right | \leq 4 q(\epsilon_t) / q(S_t).
$$
Finally, combine the above equation with (\ref{havetobound2}) and (\ref{qse}) to get
$$
\left | \langle v, B_t u_t \rangle \right | = \left | \int_{\RR} y Q(y) d \gamma^1(y) \right | \leq
$$
$$
\left | \int_{\RR} y (Q(y) - q(g(y)) d \gamma^1(y) \right | + \left | \int_{\RR} y q(g(y)) d \gamma^1(y) \right | \leq
$$
$$
\left (\frac{4}{S_t^2(1-S_t)^2} + 4 \right ) q(\epsilon_t) / q(S_t).
$$
Using the estimate (\ref{estq1}) completes the proof.
\end{proof}
\bigskip

We are now ready to prove the main proposition of the section.
\begin{proof} [\textbf{Proof of Proposition \ref{mainsec4}}]
The proof is just a combination of the lemmas in the section together with equation (\ref{finald}), which reads
\begin{equation}
d \epsilon_t = d \left ( q(S_t)^2 - |U_t|^2 \right ) =
\end{equation}
$$
2 (1-t)^{-1/2} |U_t| \left  \langle \left (\tilde B_t - B_t \right) u_t , d W_t \right \rangle - (1-t)^{-1} \Vert B_t \Vert^2_{HS} dt + (1-t)^{-1} \left \Vert \tilde B_t \right \Vert^2_{HS} \frac{|U_t|^2}{q(S_t)^2} dt.
$$
Denote,
$$
\alpha_t = 2 (1-t)^{-1/2} |U_t| \left (\tilde B_t - B_t \right) u_t
$$
and
$$
\beta_t = (1-t)^{-1} \left (\left \Vert \tilde B_t \right \Vert^2_{HS} \frac{|U_t|^2}{q(S_t)^2} - \Vert B_t \Vert^2_{HS} \right )
$$
so that
$$
d \epsilon_t =  \langle \alpha_t, d W_t \rangle + \beta_t dt.
$$
Since $|U_t| \leq q(1/2)$, in order to prove part (i) of the proposition it suffices to show that
\begin{equation} \label{ntsb1}
\left | \left (B_t - \tilde B_t \right) u_t \right | < \frac{C}{S_t^3(1 - S_t)^3} \epsilon_t \sqrt{|\log \epsilon_t|}
\end{equation}
for a universal constant $C>0$. By the triangle inequality,
$$
\left | \left (B_t - \tilde B_t \right) u_t \right | \leq
$$
$$
\left | \left \langle \left ( B_t - \tilde B_t \right ) u_t, u_t \right \rangle \right | + \max_{v \perp u_t, \atop |v| = 1} |\langle v, B_t u_t \rangle|.
$$
A combination of lemmas \ref{estu} and \ref{estoffdiag} establishes (\ref{ntsb1}). \\
In order to prove part (ii) of the proposition, we write
$$
\Vert B \Vert_{HS}^2 = \Vert P_{E^\perp} B_t P_{E^\perp} \Vert_{HS}^2 + \max_{v \perp u_t, \atop |v| = 1} |\langle v, B_t u_t \rangle|^2 + \left | \left \langle  B_t u_t, u_t \right \rangle \right |^2.
$$
Also, by definition of the rank-one matrix $\tilde B_t$, we have
$$
\Vert \tilde B_t \Vert_{HS}^2 = \left | \left \langle \tilde B_t u_t, u_t \right \rangle \right |^2.
$$
These two facts combined and the triangle inequality give
\begin{equation} \label{HSeq}
(1-t) |\beta_t| =  \left | \left (\Vert \tilde B \Vert_{HS}^2 - \Vert B \Vert_{HS}^2 \right )+
\Vert \tilde B \Vert_{HS}^2 \left (\frac{|U_t|^2}{q(S_t)^2} - 1 \right)  \right | \leq
\end{equation}
$$
\Vert P_{E^\perp} B_t P_{E^\perp} \Vert_{HS}^2 + \max_{v \perp u_t, \atop |v| = 1} |\langle v, B_t u_t \rangle|^2 +
$$
$$
\left | \left \langle \left ( B_t - \tilde B_t \right ) u_t, u_t \right \rangle \right | \left | \left \langle \left ( B_t + \tilde B_t \right ) u_t, u_t \right \rangle \right | +
$$
$$
\frac{1}{q(S_t)^2} \left | \left \langle \tilde B_t u_t, u_t \right \rangle \right |^2 \epsilon_t
$$
We turn to estimate each term separately. First we remark that by the triangle inequality
\begin{equation} \label{bsmall}
B_t^2  \leq \left  ( \int_{\RR^n} x \otimes x d \gamma(x) + \mathrm{Id} \right )^2 \leq 4 \mathrm{Id}.
\end{equation}
Therefore,
$$
\left | \left \langle B_t u_t, u_t \right \rangle \right | \leq 2
$$
and, analogously,
\begin{equation} \label{tildebsmall}
\left | \left \langle \tilde B_t u_t, u_t \right \rangle \right | \leq 2.
\end{equation}

These two equations together with Lemma \ref{estu} give
\begin{equation} \label{secondterm}
\left | \left \langle \left ( B_t - \tilde B_t \right ) u_t, u_t \right \rangle \right | \left | \left \langle \left ( B_t + \tilde B_t \right ) u_t, u_t \right \rangle \right | \leq \frac{C}{S_t^2(1-S_t)^2} \epsilon_t \sqrt{|\log \epsilon_t|}.
\end{equation}
Equation (\ref{bsmall}) also teaches us that $|\langle v, B_t u_t \rangle|^2 \leq 4 |\langle v, B_t u_t \rangle|$. We now use Lemma \ref{estoffdiag} and Lemma \ref{estup} which together give
\begin{equation} \label{firsttemrs}
\Vert P_{E^\perp} B_t P_{E^\perp} \Vert_{HS}^2 + \max_{v \perp u_t, \atop |v| = 1} |\langle v, B_t u_t \rangle|^2 \leq \frac{C}{S_t^3(1-S_t)^3} \epsilon_t \sqrt{|\log \epsilon_t|}.
\end{equation}
Another application of (\ref{tildebsmall}) together with equation (\ref{estq1}) gives
\begin{equation} \label{thirdterm}
\frac{1}{q(S_t)^2} \left | \left \langle \tilde B_t u_t, u_t \right \rangle \right |^2 \epsilon_t \leq \frac{C}{S_t^2(1-S_t)^2} \epsilon_t.
\end{equation}
Finally, plugging the estimates (\ref{secondterm}), (\ref{firsttemrs}) and (\ref{thirdterm}) into (\ref{HSeq}) gives
$$
|\beta_t| \leq (1-t)^{-1} \frac{C}{S_t^3(1-S_t)^3} \epsilon_t \sqrt{|\log \epsilon_t|}
$$
and the proof is complete.
\end{proof}

\subsection{The upper bound} \label{subupper}

The goal of this subsection is to prove the upper bound for the deficit in Theorem \ref{thmrobust}. Namely, we aim to show that for all $0 < s < 1$ there exists a constant $c_s>0$, such that the following holds: for every $0 < \rho < 1$ and every measurable $A \subset \RR^n$, one has

\begin{equation} \label{uppernts}
\St_\rho (H(A)) - \St_\rho(A) \leq \frac{c_{\gamma(A)} }{\sqrt{1 - \rho}} \eps(A).
\end{equation}

We begin defining the operator $P_\rho$ acting on integrable functions $f:\RR^n \to \RR$ by the formula
\begin{equation} \label{defOU}
P_\rho(f) (x) = \int_{\RR^n} f \left ( \rho x + \sqrt{1-\rho^2} y  \right) d \gamma(y).
\end{equation}
Under a slightly different parametrization, this is just the Ornstein-Uhlenbeck
operator for the Gaussian measure. For a measurable set $A \subset \RR^n$ we abbreviate $P_\rho(A) = P_\rho(\mathbf{1}_A)$.
The significance of this definition is the following: for any two measurable sets $A,B \subset \RR^n$, we have
\begin{equation} \label{Pt}
\int_A P_\rho(B) (x) d \gamma (x) =
\end{equation}
$$
\int_{\RR^n} \int_{\RR^n} \mathbf{1}_{x \in A} \mathbf{1}_{ \rho x + \sqrt{1-\rho^2} y \in B} d \gamma(y) d \gamma(x) =
$$
$$
\PP( X \in A ~ \mbox{ and } ~\rho X + \sqrt{1-\rho^2} Y \in B )
$$
where $X$ and $Y$ are independent standard Gaussian vectors.

Now, if $Y'$ is another standard Gaussian random vector independent from $X,Y$, then it is easy to check that
$$
( \sqrt{\rho} X+ \sqrt{1-\rho}Y, \sqrt{\rho} X+ \sqrt{1-\rho}Y' ) \sim (X, \rho X+ \sqrt{1-\rho^2}Y ).
$$
where the sign $\sim$ means that both expressions are distributed according to the same law. Consequently, we get by definition that
\begin{equation} \label{Stpf}
\St_\rho(A) = \int_A P_\rho(A) d \gamma
\end{equation}
for every $A \subset \RR^n$ measurable. Moreover, since the right hand side of (\ref{Pt}) is invariant under interchanging $A$ and $B$, we learn that $P_\rho$ is a self-adjoint linear operator. It is straightforward to check that for all $f$,
$$
P_\rho(f) =P_{\sqrt \rho} (P_{\sqrt \rho}(f))
$$
and it follows that $P_\rho$ is a positive semi-definite operator. Consider the non-negative, symmetric quadratic form
$$
K_\rho(f,g) := \int_{\RR^n} f(x) \left ( P_\rho(g) (x) \right ) d \gamma(x).
$$
By formula (\ref{Stpf}) we have
$$
\St_\rho(H(A)) - \St_\rho(A) = K_\rho(H(A),H(A)) - K_\rho(A,A) =
$$
\begin{equation} \label{ekrho}
(K_\rho(H(A),H(A)) - K_\rho(H(A),A)) + (K_\rho(H(A),A) - K_\rho(A,A)).
\end{equation}
We claim that in order to give an upper bound for the deficit, it is enough to estimate the first term in the above equation. Namely, we claim that
\begin{equation} \label{indeed}
\St_\rho(H(A)) - \St_\rho(A) \leq 2 (K_\rho(H(A),H(A)) - K_\rho(H(A),A)).
\end{equation}
Indeed, according to Theorem \ref{thm1}, we have
$$
K_\rho(H(A),H(A)) \geq K_\rho(A,A).
$$
By the Cauchy-Schwartz and the arithmetic-geometric inequalities
$$
K_\rho (H(A), A) \leq \sqrt{ K_\rho (H(A), H(A)) K_\rho(A,A)} \leq \frac{K_\rho (H(A), H(A)) + K_\rho(A,A)}{2},
$$
or in other words
$$
(K_\rho(H(A),H(A)) - K_\rho(H(A),A)) \geq  (K_\rho(H(A),A) - K_\rho(A,A)).
$$
Plugging this into (\ref{ekrho}) gives (\ref{indeed}). Let us give an upper bound for the right hand side of $(\ref{indeed})$. We have by definition
$$
I := K_\rho(H(A),H(A)) - K_\rho(H(A),A) =
$$
$$
\int_{\RR^n}  P_\rho(H(A)) (x) (\mathbf{1}_{H(A)} - \mathbf{1}_A ) (x) d \gamma(x).
$$
Let $v$ be a unit vector, and $\alpha \in \RR$ such that
$$
H(A) = \{\langle x, v \rangle \geq \alpha \}.
$$
Moreover, let $\mu$ be the push-forward of the restriction of the standard Gaussian measure to the set $A$ under the map $x \to \langle x, v \rangle$, let $f(x)$ be the density of $\mu$ with respect to the standard Gaussian measure and define $h(x) = \mathbf{1}_{x \geq \alpha}$. Since $H(A)$ is invariant under translations orthogonal to $v$, it is clear that
$$
P_\rho(H(A)) (x) = P_\rho(h) (\langle x, v \rangle).
$$
With this notation, the above integral becomes
$$
I = \int_{\RR}  P_\rho(h) (x) (h(x) - f(x)) d \gamma^1(x).
$$
We can calculate,
$$
P_\rho(h) (x) = \int_{\RR} \mathbf{1}_{\rho x + \sqrt{1 - \rho^2} y \geq \alpha} d \gamma(y) =
$$
$$
\gamma \left ( \left [\frac{\alpha - \rho x}{\sqrt{1 - \rho^2}}, \infty \right ) \right  ) = \Phi \left ( \frac{\rho x - \alpha}{\sqrt{1-\rho^2}} \right )
$$
and the above integral becomes
$$
I = \int_{\RR} \Phi \left ( \frac{\rho x - \alpha}{\sqrt{1-\rho^2}} \right ) (h(x) - f(x)) d \gamma^1(x).
$$
Since $\gamma(A) = \gamma(H(A))$, we know that
$$
\int_{\RR} (h(x) - f(x)) d \gamma^1(x) = 0
$$
and therefore
\begin{equation} \label{inspiration}
I = \int_{\RR} \left ( \Phi \left ( \frac{\rho x - \alpha}{\sqrt{1-\rho^2}} \right ) - \Phi \left ( \frac{\rho \alpha - \alpha}{\sqrt{1-\rho^2}} \right ) \right ) (h(x) - f(x)) d \gamma^1(x).
\end{equation}
Since $\Phi(x)' \leq 1$ for all $x \in \RR$, we have
\begin{equation}
\left | \Phi \left ( \frac{\rho x - \alpha}{\sqrt{1-\rho^2}} \right ) - \Phi \left ( \frac{\rho \alpha - \alpha}{\sqrt{1-\rho^2}} \right ) \right | \leq \frac{\rho}{\sqrt{1 - \rho^2}} |x - \alpha|.
\end{equation}
Next, observe that by definition $h(x) - f(x) \geq 0$ for $x \geq \alpha$ and $h(x) - f(x) \leq 0$ for $x \leq \alpha$. Since $\Phi(\cdot)$ is an increasing function, it implies that the expression inside the above integral is non-negative. Therefore, we can estimate
$$
I \leq \frac{\rho}{\sqrt{1 - \rho^2}} \int_{\RR} (x - \alpha) (h(x) - f(x)) d \gamma^1(x) = \frac{\rho}{\sqrt{1 - \rho^2}} (q(H(A)) - q(A)) \leq
$$
$$
\frac{\rho}{\sqrt{1 - \rho^2}} \frac{q(H(A))^2 - q(A)^2}{q(H(A)} \leq \frac{1}{\sqrt{1 - \rho}} \frac{C \eps(A)}{\gamma(A)(1-\gamma(A))}
$$
where the last inequality follows from the bound (\ref{estq1}). By plugging this into (\ref{indeed}), we get (\ref{uppernts}) and the upper bound is established.

\section{Appendix}
In the appendix we fill in a few technical lemmas whose proofs were omitted from the note.

\begin{proof}[\textbf{Proof of Lemma \ref{LemdiffF}}]
Define
$$
g_{x,t} (y) := \gamma_{y,\sqrt{1-t}} (x) = \frac{1}{(2 \pi (1-t))^{n/2} } \exp \left ( - \frac{|x-y|^2}{2 (1-t)} \right ).
$$
A simple calculation gives
$$
\nabla g_{x,t}(y) = \frac{(x-y)}{(1-t)} g_{x,t}(y),
$$
and therefore
$$
\Delta g_{x,t} (y) = \left (\frac{ |x-y|^2}{(1-t)^2 } + \frac{n}{(1-t)} \right ) g_{x,t} (y).
$$
Moreover,
$$
\frac{\partial}{\partial t} g_{x,t} (y) = - \left (\frac{ |x-y|^2}{2 (1-t)^2 } + \frac{n}{2 (1-t)} \right ) g_{x,t} (y).
$$
We can therefore calculate, using It\^{o}'s formula,
$$
d F_t(x) = d g_{x,t} (W_t) = \frac{\partial}{\partial t} g_{x,t} (W_t) + \nabla g_{x,t} W_t \cdot d W_t + \frac{1}{2} \Delta g_{x,t} (W_t)
$$
$$
=  \nabla g_{x,t} (W_t) \cdot d W_t = (1-t)^{-1} \langle x - W_t, d W_t \rangle F_t(x)
$$
which proves that $F_t(x)$ is a local martingale and establishes equation \eqref{stochastic}. 

Next, let $\phi: \RR^n \to \RR$ satisfy $|\phi(x)| < C_1 + C_2 |x|^p$ for some constants $C_1,C_2,p>0$. Remarking that the integral $\int_{\RR^n} \phi(x) \exp(-\alpha |x-x_0| ^2)$ is absolutely convergent for all $\alpha > 0$ and $x_0 \in \RR^n$, we deduce that for all $0<t<1$ and all $y \in \RR^n$, we have
$$
\nabla \int_{\RR^n} \phi(x) g_{x,t} (y) dx = \int_{\RR^n} \phi(x) \nabla g_{x,t}(y) dx
$$
and
$$  
\left (\frac{\partial}{\partial t} - \frac{1}{2} \Delta \right ) \int_{\RR^n} \phi(x) g_{x,t}(y) dx = 0.
$$
Formula \eqref{stochasticfub} follows. Finally, the fact that the process $t \to \int_{\RR^n} \phi(x) F_t(x) dx$ is a martingale follows immediately from the fact that
$$
\int_{\RR^n} \phi(x) F_t(x) dx = \EE[\phi(W_1) | \FF_t].
$$
\end{proof}
\bigskip
\begin{proof} [\textbf{Proof of Lemma \ref{lemestq}}]
We begin with formula (\ref{estq2}). By equation (\ref{q2}) we have
$$
\frac{q(s)}{s} = \frac{e^{-\Psi(s)^2 / 2}}{\int_{- \Psi(s)}^\infty e^{-x^2/2}dx }.
$$
Denote $y = - \Psi(s)$. Since, by (\ref{derq1}), $q'(s)$ is a decreasing function, we may assume that $s < \frac{1}{2}$ and
thus $y > 0$. The inequality $\left ( y + \frac{1}{y+1} \right )^2 \leq y^2 + 3$ suggests that
$$
\int_{y}^\infty e^{-x^2 / 2} \geq \int_{y}^{y + 1/(y+1)} e^{-(y + 1/(y+1))^2 / 2} dx \geq e^{-3} \frac{1}{y+1} e^{-y^2/2}
$$
so
$$
\frac{q(s)}{s} = \frac{e^{-y^2 / 2}}{\int_{y}^\infty e^{-x^2/2}dx } \leq e^{3} (y + 1) = - e^{3} (\Psi(s) + 1)
$$
for all $s < 1/2$. But a well known fact about the Gaussian distribution is that for $s < 1/2$
$$
- \Psi(s) \leq C \sqrt{|\log s|}
$$
for some a universal constant $C>0$. Formula (\ref{estq2}) follows. \\

The upper bound of formula (\ref{estq1}) now follows immediately from the symmetry of the function $q(s)$ around $s=1/2$, and we are left with proving the lower bound. Consider the function
$$
h(s) = 4 s(1-s) q(1/2) = \frac{4}{\sqrt{2 \pi}} s(1-s).
$$
We know that $h(s) = q(s)$ for $s \in \{0,1/2,1\}$. Moreover, $h(s)$ is tangent to $q(s)$ at $s = 1/2$,
and lastly, according to formula (\ref{derq2}), we see that $q'(s)$ is a convex function in $s \in [0,1/2]$. Consequently,
the convex function $g(s) = q'(s) - h'(s)$ intersects the x-axis exactly once in the interval $(0,1/2)$, say at the point $s_0$ (since it is equal to zero at $s=1/2$ and since its integral on that interval is equal zero). Now, we have
$$
q''(1/2) = - \sqrt{2 \pi} > - \frac{8}{\sqrt{2 \pi}} = h''(1/2),
$$
which implies that $g'(1/2) > 0$. We conclude that $g(s)(s-s_0) < 0$ for $0 < s < 1/2$. By the fact that $q(0) = h(0)$ and $q(1/2) = h(1/2)$ we know that
$$
\int_0^{1/2} g(s) ds = 0
$$
and therefore
$$
q(s) - h(s) = - \int_{s}^{1/2} g(x) dx \geq 0, ~~ \forall 0 < s < 1/2
$$
so $q(s) \geq h(s)$ in $0 < s < 1/2$. Since both functions are symmetric around $s=1/2$, we have established that
$$
q(s) \geq h(s) = \frac{4}{\sqrt{2 \pi}} s(1-s)
$$
and the upper bound is proven.
\end{proof}
\bigskip
\begin{proof} [\textbf{Proof of Fact \ref{secondder}}]
The upper bound follows immediately from the fact that, according to formula (\ref{derq2}) one has $q''(s) < q''(1/2) < - 2$ for all $0 < s < 1$. Let us prove the lower bound. By the symmetry of $q(s)$ around $s=1/2$, we may assume without loss of generality
that $h < 1/2$. Define
$$
f(s) = q(s) - q(h) - q'(h) (s-h)
$$
and
$$
g(s) = h^{-2} f(0) (s-h)^2.
$$
Note that by definition, the functions $f(0) = g(0)$, $f(h) = g(h) = 0$ and $f'(h) = g'(h) = 0$. Now, according to formula (\ref{derq2}), the function $q'(s)$ is convex in $[0,1/2]$ (here, we use the assumption that $h < 1/2$). Therefore, the function $w(s) = f'(s) - g'(s)$ is also convex in this interval. Now, we know that $w(h) = 0$ and that $\int_0^h w(s) ds = 0$, so from the convexity of $w(s)$ we conclude that there exists $s_0 \in (0,h)$ such that
\begin{equation} \label{whh}
w(h) = 0 \mbox{ and }  w(s) (s-s_0) \leq 0, ~~ \forall 0 < s < h
\end{equation}
and therefore
$$
\int_s^h w(x) dx \leq 0, ~~ \forall 0<s<h.
$$
It follows that $g(s) < f(s)$ for all $0 < s < h$. Moreover, since $w(s)$ is convex up to $s=1/2$, necessarily
we have $w(s) > 0$ for $h<s<1/2$ and it follows that
$$
g(s) \leq f(s), ~~ \forall 0 \leq s \leq 1/2.
$$
Next, we show that $g(s) \leq f(s)$ also for $1/2 < s < 1$, or in other words we will show that
$$
p(s) \leq q(s), ~~ \forall 0 < s < 1
$$
where
$$
p(s) = g(s) + q(h) + q'(h) (s-h).
$$
Indeed, the fact that $w(s)$ is convex up to $s=1/2$ and by (\ref{whh}), we know that $w(1/2) > 0$, which means
that $p'(1/2) < q'(1/2) = 0$, and therefore the parabola $p(s)$ attains a maximum at some point $b \leq 1/2$ which means
that $p(1-s) \leq p(s)$ for all $s < 1/2$. So by the symmetry of $q(s)$ around $s=1/2$ we get
$$
q(1-s) = q(s) \geq p(s) \geq p(1-s)  , ~~ \forall 0 < s < 1/2.
$$
We finally have $f(s) \geq g(s)$ for all $0 \leq s \leq 1$. In order to prove the lower bound, it therefore suffices
to show that
$$
- \frac{4}{h^2(1-h)^2} (s-h)^2 \leq g(s) = h^{-2} f(0) (s-h)^2 = h^{-2} (s-h)^2 (-q(h) + h q'(h))
$$
or in other words, using the assumption $h<1/2$,
$$
1 \geq q(h) - h q'(h)
$$
a combination of (\ref{derq1}) with the fact that $q(h) \leq q(1/2) < 1$ finishes the proof.
\end{proof}

\end{document}